\numberwithin{equation}{section}
\DeclareRobustCommand*{\bfseries}{%
  \not@math@alphabet\bfseries\mathbf
  \fontseries\bfdefault\selectfont
  \boldmath
}
\def\dto{{\scriptstyle\buildrel{\scriptstyle\longrightarrow}
\over{{}_{\scriptstyle\longrightarrow}}}}
\def\wto{\rightharpoonup}
\def\swto{\stackrel{*}{\wto}}
\def\RR{\mathbb{R}}
\def\a{\W}
\def\MM{\mathbb{M}}
\def\NN{\mathbb{N}}
\def\Z{{\mathcal Z}}
\def\Q{\mathrm Q}
\def\Cub{\mathrm{Cub}}
\DeclareMathOperator{\maxw}{\mathcal{M}_{0}}
\DeclareMathOperator*{\esssup}{ess\,sup}
\DeclareMathOperator*{\limin}{\underline{\lim}}
\DeclareMathOperator*{\limsu}{\overline{\lim}}
\DeclareMathOperator{\subsubset}{\subset\!\subset}
\DeclareMathOperator{\ssetminus}{\!\setminus\!}
\DeclareMathOperator{\nspace}{\!\!\!\!\!\!\!\!\!}
\def\ssup{\!\!>\!\!}
\def\sinf{\!\!<\!\!}
\def\p{{\mathrm p}}
\def\M{\mathrm{m}}
\def\L{\mathcal{L}}
\def\diam{{\rm diam}}
\def\dom{{\rm dom}}
\def\dist{{\rm dist}}
\def\F{\overline{F}}
\def\FF{\mathcal{F}}
\def\inte{{\rm int}}
\def\eps{\varepsilon}
\def\O{\mathcal O}
\def\G{\mathcal G}
\def\S{\mathcal S}
\def\W{{f}}
\def\sgn{\textrm{sgn}}
\DeclareMathAlphabet{\mathpzc}{OT1}{pzc}{m}{it}
\newtheorem{theorem}{Theorem}[section]
\newtheorem{lemma}{Lemma}[section]
\newtheorem{proposition}{Proposition}[section]
\newtheorem{corollary}{Corollary}[section]
\newtheorem{definition}{Definition}[section]
\renewcommand{\liminf}{\limin}
\renewcommand{\limsup}{\limsu}
\theoremstyle{remark}
\newtheorem{remark}{Remark}[section]
\newlist{hyp}{enumerate}{1}
\setlist[hyp,1]{label={\rm (${\rm A}_{\arabic*}$)}}
\newlist{hypg}{enumerate}{1}
\setlist[hypg,1]{label={\rm (${\rm B}_{\arabic*}$)}}
\newlist{hypc}{enumerate}{1}
\setlist[hypc,1]{label={\rm (${\rm C}_{\arabic*}$)}}
\newlist{hypgr}{enumerate}{1}
\setlist[hypgr,1]{label={\rm (${\rm D}_{\arabic*}$)}}
\numberwithin{equation}{section}
\title[]{On the relaxation of unbounded multiple integrals}
\author[Omar Anza Hafsa]{Omar Anza Hafsa}
\address{UNIVERSITE de NIMES, Laboratoire MIPA, Site des Carmes, Place Gabriel P\'eri, 30021 N\^\i mes, France}
\address{Laboratoire LMGC, UMR-CNRS 5508, Place Eug\`ene Bataillon, 34095 Montpellier, France.}
\email{Omar Anza Hafsa <omar.anza-hafsa@unimes.fr>}
\author[Jean Philippe Mandallena]{Jean Philippe Mandallena}
\address{UNIVERSITE de NIMES, Laboratoire MIPA, Site des Carmes, Place Gabriel P\'eri, 30021 N\^\i mes, France}
\email{Jean-Philippe Mandallena <jean-philippe.mandallena@unimes.fr>}
\keywords{Relaxation, integral representation, constraints}
\begin{document}
\begin{abstract} We study the relaxation of multiple integrals of the calculus of variations where the integrands are nonconvex with convex effective domain and can take the value $\infty$. We use local techniques based on measure arguments to prove integral representation in Sobolev spaces of functions which are almost everywhere differentiable. Applications are given in the scalar case and in the case of integrands with quasiconvex growth and $\p(x)$-growth.
\end{abstract}
\maketitle
\section{Introduction} 
Let $m,d\ge 1$ be two integers. Let $\Omega\subset\RR^d$ be a nonempty bounded open set with Lipschitz boundary. Define $F:W^{1,p}(\Omega;\RR^m)\to [0,\infty]$ by
\[
F(u):=\int_\Omega \W(x,\nabla u(x))dx
\]
where the integrand $\W:\Omega\times \MM^{m\times d}\to [0,\infty]$ is Borel measurable, and $\MM^{m\times d}$ stands for the set of $m$-rows and $d$-columns matrices. The ``relaxed" functional $\F$ is given by
\begin{align*}
\F(u):=\inf\left\{\liminf_{n\to \infty} F(u_n):W^{1,p}(\Omega;\RR^m)\ni u_n\wto u\right\}
\end{align*}
(if $p=\infty$ then replace $\wto$ by $\stackrel{\ast}{\wto}$).
The goal of the paper is to study the integral representation of $\F$ for nonconvex integrands $\W$ which can take the value $\infty$. In this case, the effective domain $\dom\W(x,\cdot):=\{\xi\in\MM^{m\times d}:\W(x,\xi)\sinf\infty\}$ of $f(x,\cdot)$ is the natural set of constraints for the gradients, the interest of such constrained relaxation problems is well described in the book \cite{carbone-dearcangelis02}. 

In the scalar case, i.e., when $\min\{d,m\}=1$, the integral representation of $\F$ is studied in \cite{dearcangelis-monsurro-zappale04,dearcangelis-zappale05, zappale05}. Under convexity of $\dom\W(x,\cdot)$ and some regularity properties of the multifunction $x\mapsto \dom\W(x,\cdot)$, integral representations with convexification of $\W(x,\cdot)$ are obtained. The present work focus on the vectorial case, i.e., when $\min\{d,m\}\ssup 1$, in this context few is known, particularly the quasiconvexification process when the integrand $\W$ is not finite is not yet understood (for works in this direction see for instance \cite{benbelgacem00,oah-jpm07,oah-jpm08a,oah10}). The main difficulty in the integral representation of $\F$ is that usually we use an approximation result of functions of $W^{1,p}(\Omega;\RR^m)$ by more regular ones, usually continuous piecewise affine or continuously differentiable functions, and this choice implies different relaxed functionals. This situation is known as Lavrentiev phenomenon (or gap) (see for instance \cite{buttazzo-belloni95}). But it is not known whether such approximation results exist when no regularity and growth assumptions are made on $\W$ and $\dom\W(x,\cdot)$. In our work we study the existence of integral representation of $\F$ on $\dom F:=\{u\in W^{1,p}(\Omega;\RR^m): F(u)\sinf\infty\}$ without using of approximation results, and then give some applications showing how to obtain a full integral representation. Following this way, we try to establish conditions for the existence of integral representation of $\F$ with the restrictions that $\dom\W(x,\cdot)$ is convex for all $x\in\Omega$, $\W$ is $p$-coercive and $p\in ]d,\infty]$. This simplified framework allows us to deal with functions of $W^{1,p}(\Omega;\RR^m)$ that are almost everywhere differentiable in $\Omega$, which is an important ingredient for the possibility of integral representation of $\F$. The techniques we use are based on measure arguments and localization.

\section{Main results}\label{Main results}
We denote by $\mathcal O(\Omega)$ the set of all open subsets of $\Omega$. For each $O\in\O(\Omega)$, we will denote by ${W}^{1,p}_0(O;\RR^m)$ the subset of all $\phi\in W^{1,p}(\Omega;\RR^m)$ such that $\phi=0$ in $\Omega\ssetminus O$ (this definition is equivalent to the classical definition of ${W}^{1,p}_0(O;\RR^m)$ (see for instance \cite[Chap. 9, p. 233]{hedberg-adams})). We denote by $\Q$ any open cube of $\RR^d$.

Let $L:\Omega\times \MM^{m\times d}\to [0,\infty]$ be a Borel measurable integrand. We consider the following assertions:

\begin{hyp}
\item\label{H0} if $p\in ]d,\infty[$ then there exists $c\ssup0$ such that for every $(x,\xi)\in\Omega\times\MM^{m\times d}$
\[
c\vert\xi\vert^p\le L(x,\xi);
\]
\item\label{H0prime} if $p=\infty$ then there exists $R_0\ssup0$ such that 
\[
\dom L(x,\cdot)\subset \overline{Q}_{R_0}(0)\mbox{ a.e. in }\Omega;
\]
\item\label{H1} there exists $\rho_0\ssup0$ such that 
\[
\overline{Q}_{\rho_0}(0)\subset\Lambda_L:=\left\{\xi\in\MM^{m\times d}:\int_{\Omega} L(x,\xi)dx\sinf\infty\right\};
\]
\item\label{H4} for almost all $x\in\Omega$
\[
\dom L(x,\cdot)\subset\Lambda_L(x):=\left\{\xi\in\MM^{m\times d}:L(x,\xi)=\lim_{\eps\to 0}\displaystyle\fint_{\Q_\eps(x)} L(y,\xi)dy\right\};
\]
\item\label{H3} there exists $C\ssup0$ such that for every $\xi,\zeta\in\MM^{m\times d},\; x\in \Omega, \mbox{ and }t\in ]0,1[$
\[
L(x,t\xi+(1-t)\zeta)\le C\left(1+L(x,\xi)+L(x,\zeta)\right);
\]
\item\label{regintegrand} for almost all $x\in\Omega$
\begin{align*}
&\dom L(x;\cdot)\subset\Xi_L:=\left\{\xi\in\MM^{m\times d}: \limsup_{\delta\to 0}\omega_\delta^L(\xi)\sinf\infty\right\},\\
\mbox{where }\;\;
&\omega_\delta^L(\xi):=\sup_{\substack{\Q\subset\Omega\\ \diam(\Q)<\delta}}\inf_{\varphi\in W^{1,p}_0(\Q;\RR^m)}\fint_\Q L(x,\xi+\nabla \varphi(x))dx.
\end{align*}
\end{hyp}

\begin{remark} Some remarks on the previous assertions are in order:

(i) The assertion \ref{H0} (resp. \ref{H0prime}) is a coercivity condition in the case $p$ finite (resp. $p$ non finite), it is used only in the Subsection~\ref{subsect1}. Note that if $p\in ]d,\infty]$ and \ref{H0} or  \ref{H0prime} hold, then due to compact embeddings of $W^{1,p}(\Omega;\RR^m)$ in $L^{\infty}(\Omega;\RR^m)$ we have
\begin{align}\label{compact imbedding}
\F(u)=\inf\left\{\liminf_{n\to \infty} F(u_n):W^{1,p}(\Omega;\RR^m)\ni u_n\stackrel{L^{\!^\infty}}{\to} u\right\}.
\end{align}

(ii) It is easy to see that the combination (\ref{H1}, \ref{H4} and \ref{regintegrand}) is equivalent to: 
\begin{hyp}[start=7]
\item\label{H14} there exists $\rho_0\ssup0$ such that 
\[
\overline{Q}_{\rho_0}(0)\subset \Lambda_L\subset\dom L(x,\cdot)\subset\Lambda_L(x)\cap \Xi_L\;\mbox{ a.e. in }\Omega.
\]
\end{hyp}

(iii) Due to \ref{H3}, the effective domain $\dom L(x,\cdot)$ is convex for all $x\in\Omega$, the same holds for $\Lambda_L$ and $\Lambda_L(x)$. 

(iv) The assertion \ref{H1} is equivalent to $0\in\inte(\Lambda_L)$ where $\inte(\Lambda_L)$ denotes the interior of $\Lambda_L$.

\end{remark}

We denote by $Y$ the cube $]0,1[^d$. Define $\Z L:\Omega\times\MM^{m\times d}\to[0,\infty]$ by 
\begin{align*}
\Z L(x,\xi):=\liminf_{\eps\to 0}\inf\left\{\int_{Y} L(x+\eps y,\xi+\nabla\varphi(y))dy: \varphi\in {W}^{1,p}_0(Y;\RR^m)\right\}.
\end{align*}

\begin{remark} (i) The formula which gives $\Z L$ can be rewritten 
 \begin{align}\label{gendacoro}
\Z L(x,\xi):=\liminf_{\eps\to 0}\inf\left\{\fint_{\Q_\eps(x)} L(y,\xi+\nabla\varphi(y))dy: \varphi\in {W}^{1,p}_0(\Q_\eps(x);\RR^m)\right\},
\end{align}
where $\Q_\eps(x)=x+\eps Y$ with $\eps\ssup 0$ and $x\in\Omega$.

(ii) If $L$ does not depend on $x$, then $L$ is $W^{1,p}$-quasiconvex in the sense of Ball \& Murat \cite{ball-murat84} if and only if $L=\Z L$. In fact $\Z L$ is the generalization to $x$-dependent integrand of the Dacorogna quasiconfexication formula. If $L$ is a Carath\'eodory integrand with $p$-polynomial growth then we can freeze the variable $x$ and show that
\begin{align*}
\Z L(x,\xi)=\inf\left\{\int_{Y} L(x,\xi+\nabla\varphi(y))dy: \varphi\in {W}^{1,\infty}_0(Y;\RR^m)\right\}
\end{align*}
which is the Dacorogna quasiconvexification formula for each $x$ fixed. However the formula \eqref{gendacoro} can be considered as a natural generalization when we deal with Borel measurable integrand which can take the value $\infty$.
\end{remark}

\begin{definition}
We say that $L$ is {\em $W^{1,p}$-quasiconvex} if $L=\Z L$.
\end{definition}

\medskip

We say that $L$ is {\em radially uniformly upper semicontinuous (ru-usc)} if there exists $a\in L^1_{\rm loc}(\Omega;]0,\infty])$ such that $\limsup_{t\to 1}\Delta^a_L(t)\leq 0$ where $\Delta^a_L:[0,1]\to ]-\infty,\infty]$ is defined by
\begin{align*}
\Delta^a_L(t):=\esssup_{x\in\Omega}\sup_{\xi\in {\rm dom}L(x,\cdot)}\frac{L(x,t\xi)-L(x,\xi)}{a(x)+L(x,\xi)}.
\end{align*}
The systematic use of the concept of ru-usc functions in the setting of the relaxation of nonconvex functional in the vectorial case starts in \cite{oah10}, then it is used to prove homogenization results in \cite{oah-jpm11,oah-jpm12}.

Define $\widehat{\Z L}:\Omega\times\MM^{m\times d}\to[0,\infty]$ by 
\begin{align*}
\widehat{\Z L}(x,\xi)&:=\liminf_{t\to 1^-}\Z L(x,t\xi).
\end{align*}
\begin{remark} (i) In fact, if $L$ is ru-usc then $\Z L$ too (see Lemma~\ref{ZLruusc}). 

(ii) If \ref{H3} holds and $\Z L$ is ru-usc then the $\liminf$ in the definition of $\widehat{\Z L}$ is a limit (see Lemma~\ref{ruuscLimit}).  
\end{remark}

We state the main result of the paper.
\begin{theorem}\label{main result-ruusc1}
Assume that $\W$ satisfies \ref{H0} {\rm (\ref{H0prime} if $p=\infty$)}, \ref{H14} and \ref{H3}. If either $\W$ is ru-usc or $\Z\W$ is both ru-usc and $W^{1,p}$-quasiconvex, then for every $u\in\dom F$ we have
\begin{align}\label{intregral-representation}
\F(u)=\int_\Omega \widehat{\Z\W}(x,\nabla u(x))dx.
\end{align}
\end{theorem}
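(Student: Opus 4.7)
The plan is to prove the two inequalities $\F(u)\le\int_\Omega \widehat{\Z\W}(x,\nabla u(x))\,dx$ and $\F(u)\ge\int_\Omega \widehat{\Z\W}(x,\nabla u(x))\,dx$ separately. The hypothesis $p>d$ together with the coercivity \ref{H0}/\ref{H0prime} and the compact embedding \eqref{compact imbedding} lets us work with $L^\infty$-convergent sequences, and every $u\in W^{1,p}(\Omega;\RR^m)$ is differentiable at almost every point of $\Omega$. The unifying mechanism throughout is the dilation $t\xi$ with $t\in (0,1)$: by \ref{H3} combined with the interior margin $\rho_0>0$ from \ref{H1}, replacing $\xi$ by $t\xi$ keeps the gradient strictly inside $\dom \W(x,\cdot)$ with uniform slack, which is what makes boundary matching feasible; ru-usc of $\W$ (hence of $\Z\W$ by Lemma~\ref{ZLruusc}) and Lemma~\ref{ruuscLimit} then let us pass $t\to 1^-$ to recover $\widehat{\Z\W}$.

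For the upper bound, I would cover $\Omega$ up to a negligible set by disjoint cubes $\Q_\eps(x_i)$, with each $x_i$ both a differentiability point of $u$ and a Lebesgue-type point witnessing \ref{H4}. On each cube, \ref{regintegrand} furnishes a near-optimizer $\varphi_i\in W^{1,p}_0(\Q_\eps(x_i);\RR^m)$ in the infimum defining $\Z\W(x_i,t\nabla u(x_i))$ whose energy is bounded by $\omega_\eps^\W(t\nabla u(x_i))+\eta$. Glue these local corrections to the affine pieces $tu(x_i)+t\nabla u(x_i)(y-x_i)$ and then interpolate back to $tu$ on a thin corona via De~Giorgi slicing, keeping the transition gradients inside $\dom\W(x,\cdot)$ thanks to the $\rho_0$-margin of \ref{H1}. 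Sending first $\eta,\eps\to 0$ yields $\F(tu)\le\int_\Omega \Z\W(x,t\nabla u(x))\,dx$, and then $t\to 1^-$ via the ru-usc / dominated-convergence argument of Lemma~\ref{ruuscLimit} gives $\F(u)\le\int_\Omega\widehat{\Z\W}(x,\nabla u(x))\,dx$.

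For the lower bound, I would use the Fonseca--M\"uller blow-up method. Given $u_n\to u$ in $L^\infty$ with $F(u_n)\to\F(u)<\infty$, extract a weak-$\ast$ limit $\mu$ of the nonnegative Radon measures $\mu_n:=\W(\cdot,\nabla u_n)\,\mathcal{L}^d\lfloor\Omega$; at almost every $x_0$ the Radon--Nikodym density $\mu'(x_0):=\lim_{\eps\to 0}\eps^{-d}\mu(\Q_\eps(x_0))$ exists and bounds $\liminf_n \fint_{\Q_\eps(x_0)}\W(y,\nabla u_n(y))\,dy$ from above. Blowing up by $v_n^\eps(y):=\eps^{-1}(u_n(x_0+\eps y)-u(x_0))$, diagonalizing in $n$ and $\eps$, and applying a boundary correction forcing $v_n^\eps\equiv t\nabla u(x_0)y$ on $\partial Y$ with $t\in (0,1)$ (admissible by \ref{H1} and \ref{H3}), one produces competitors for the infimum defining $\Z\W(x_0,t\nabla u(x_0))$, hence $\mu'(x_0)\ge \Z\W(x_0,t\nabla u(x_0))$. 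Lemma~\ref{ruuscLimit} in the limit $t\to 1^-$ gives $\mu'(x_0)\ge\widehat{\Z\W}(x_0,\nabla u(x_0))$ a.e., and integration recovers the lower bound. The alternative hypothesis, where only $\Z\W$ is assumed ru-usc and $W^{1,p}$-quasiconvex, is handled by running the same scheme with $\Z\W$ in place of $\W$, exploiting $\Z(\Z\W)=\Z\W$ so that the local near-optimizers are available directly at the $\Z\W$ level.

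The main obstacle in both bounds is the boundary surgery when the integrand can take the value $\infty$: a naive cut-off in a transition region instantly creates infinite energy if the interpolated gradient leaves $\dom\W(x,\cdot)$. Circumventing this forces the four structural assumptions to cooperate tightly---\ref{H1} supplies a uniform interior ball $\overline{Q}_{\rho_0}(0)\subset \dom\W(x,\cdot)$, \ref{H3} keeps convex combinations admissible with a controlled energy penalty, \ref{regintegrand} prevents the $\Z\W$-almost-minimizers from blowing up as $\eps\to 0$, and ru-usc compensates the energy loss incurred by dilating with $t<1$ in the final limit. Orchestrating these together in a quantitative De~Giorgi slicing / Fonseca--M\"uller framework is the technical heart of the proof.
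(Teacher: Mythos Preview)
Your strategy---two inequalities, $t$-dilation to create convex-combination slack for \ref{H3}, ru-usc to pass $t\to 1^-$---is exactly the paper's, and your lower bound via Fonseca--M\"uller blow-up is the paper's Section~7. One imprecision there: you claim $\mu'(x_0)\ge \Z\W(x_0,t\nabla u(x_0))$, but the cut-off competitor is $tw_n$ with $w_n=\phi v_n+(1-\phi)l_{\nabla u(x_0)}$, so on the interior you must bound $\int L(\cdot,t\nabla v_n)$ in terms of $\int L(\cdot,\nabla v_n)$ via ru-usc, picking up an extra $\Delta_L^a(t)(a(x_0)+\mu'(x_0))$ term; only after integrating and sending $t\to 1^-$ does this disappear. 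Forcing the boundary value $t\,l_{\nabla u(x_0)}$ directly while keeping $v_n$ inside, as your wording suggests, leaves a residual $(1-t)l_{\nabla u(x_0)}$ in the cut-off term that does not vanish as $n\to\infty$.

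For the upper bound the paper takes a genuinely different route. Rather than globally gluing affine pieces $tu(x_i)+t\nabla u(x_i)(\cdot-x_i)+\varphi_i$ across a Vitali cover and interpolating back to $tu$ on coronas (your plan), the paper uses the Bouchitt\'e--Fonseca--Mascarenhas mechanism: it introduces the local Dirichlet infima $\M_F(tu;\Q)$ and the Carath\'eodory-type set function $\M_F^\ast(tu;\cdot)$, shows $\F(tu;O)\le \M_F^\ast(tu;O)$ by a single gluing of abstract Dirichlet near-minimizers (each already matching $tu$ on $\partial\Q_i$, so no inter-cube surgery), proves $\M_F^\ast(tu;\cdot)$ extends to a Radon measure absolutely continuous with respect to $dx$, and only then identifies its density pointwise as $\le \Z\W(x_0,t\nabla u(x_0))$ via a single local cut-off. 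What this buys is that the delicate corona estimate---the one requiring \ref{H4}, Lemma~\ref{lemma finite integrand}, and the $\rho_0$-margin---is performed once, at a generic Lebesgue point, rather than summed over a countable cover where uniformity of the \ref{H4}-limits across all $x_i$ simultaneously would have to be controlled. Your direct approach can be made to work, but the bookkeeping for the summed corona energies is heavier than your sketch indicates; the paper's indirection via $\M_F^\ast$ trades that for a measure-theoretic lemma (Lemma~\ref{localdirichlet}) of independent interest.
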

\begin{remark} (i) Under the same assumptions the local version of Theorem~\ref{main result-ruusc1} also holds, i.e., if we set 
\begin{align*}
\F(u;O):=\inf\left\{\liminf_{n\to \infty} \int_O \W(x,\nabla u_n(x))dx:W^{1,p}(\Omega;\RR^m)\ni u_n\wto u\right\}
\end{align*}
then 
\begin{align*}
\F(u;O)=\int_O \widehat{\Z\W}(x,\nabla u(x))dx
\end{align*}
for all $O\in\O(\Omega)$ and $u\in\dom F(\cdot;O)$ where 
\[
\dom F(\cdot;O)=\left\{u\in W^{1,p}(\Omega;\RR^m): \int_O \W(x,\nabla u(x))dx\sinf\infty\right\}.
\]
(ii) We do not know whether $\Z\W$ is $W^{1,p}$-quasiconvex (i.e., $\Z(\Z\W)=\Z\W$) when $\W$ is assumed to be ru-usc.
\end{remark}

If we consider a stronger assumption (see \ref{H2}) in place of \ref{H1} then the following result shows that the full integral representation of $\F$ holds. 
\begin{theorem}\label{main result-ruusc2} Assume that $\W$ satisfies \ref{H0} {\rm(\ref{H0prime} if $p=\infty$)}, \ref{H3}, \ref{H4}, \ref{regintegrand} and 
\begin{hyp}[start=8]
\item\label{H2} there exists $\rho_0\ssup 0$ such that for every $u\in W^{1,p}(\Omega;\RR^m)$
\[
\vert u\vert_{1,p}\le \rho_0\Longrightarrow \int_\Omega \W(x,\nabla u(x))dx\sinf\infty.
\]
\end{hyp}
If either $\W$ is ru-usc or $\Z\W$ is both ru-usc and $W^{1,p}$-quasiconvex then \eqref{intregral-representation} holds for all $u\in\dom \F$.
\end{theorem}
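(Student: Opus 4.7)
The plan is to bootstrap from Theorem~\ref{main result-ruusc1} using that \ref{H2} is strictly stronger than \ref{H1}. Indeed, applying \ref{H2} to the affine function $u(x):=\xi\cdot x$ with $|\xi|$ small forces $\xi\in\Lambda_{\W}$, so \ref{H1} holds and Theorem~\ref{main result-ruusc1} already yields $\F(u)=G(u)$ on $\dom F$, where $G(u):=\int_\Omega \widehat{\Z\W}(x,\nabla u(x))dx$. The task thus reduces to extending this identity to $u\in\dom\F\setminus\dom F$.

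For the inequality $G(u)\le\F(u)$ on $\dom\F$, pick $u_n\wto u$ with $F(u_n)\to\F(u)$; then $u_n\in\dom F$ for $n$ large, so Theorem~\ref{main result-ruusc1} gives $\F(u_n)=G(u_n)$. The sequential weak lower semicontinuity of $G$ on $W^{1,p}(\Omega;\RR^m)$, which follows from the $W^{1,p}$-quasiconvexity of $\widehat{\Z\W}$ (inherited from the assumed dichotomy on $\W$ or $\Z\W$, combined with Lemma~\ref{ruuscLimit}), then yields
\[
G(u)\le\liminf_n G(u_n)=\liminf_n \F(u_n)\le\lim_n F(u_n)=\F(u).
\]

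For the converse $\F(u)\le G(u)$, I would construct a recovery sequence $v_j\wto u$ with $\limsup_j F(v_j)\le G(u)$ via the cube-decomposition underlying the Dacorogna representation \eqref{gendacoro} of $\Z\W$. For each $t\in(0,1)$ close to $1$ and $\eps>0$ small, cover $\Omega$ by cubes $\Q_\eps(x_i)$ and select correctors $\varphi_i\in W^{1,p}_0(\Q_\eps(x_i);\RR^m)$ nearly realizing the infimum defining $\Z\W(x_i,t\nabla u(x_i))$; gluing them to $tu$ produces $v^{t,\eps}\wto tu$ with $\limsup_\eps \int_\Omega \W(x,\nabla v^{t,\eps})dx\le \int_\Omega \Z\W(x,t\nabla u(x))dx$. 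Letting $t\to1^-$ and applying Lemma~\ref{ruuscLimit} to the ru-usc of $\Z\W$ drives the right-hand side to $G(u)$, and a diagonal extraction in $(t,\eps)$ yields the desired sequence.

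The main obstacle is precisely this last construction: ensuring the glued competitors have finite global energy when $u$ itself does not. This is where \ref{H2} is indispensable rather than \ref{H1}: the $W^{1,p}$-ball it provides inside $\dom F$ supplies admissible local correctors with finite energy and prescribed zero boundary data, even on cubes where $\nabla u$ escapes $\dom \W(x,\cdot)$; the constants-in-a-ball condition \ref{H1} alone cannot accommodate this and hence only suffices to represent $\F$ on $\dom F$ as in Theorem~\ref{main result-ruusc1}.
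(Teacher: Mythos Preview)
Your lower bound argument has a genuine gap. You assert that $G(u)=\int_\Omega\widehat{\Z\W}(x,\nabla u)\,dx$ is sequentially weakly lower semicontinuous because $\widehat{\Z\W}$ is $W^{1,p}$-quasiconvex, but neither claim is justified. Under the branch where only $\W$ is ru-usc, the paper explicitly notes (in the remark following Theorem~\ref{main result-ruusc1}) that it is unknown whether even $\Z\W$ is $W^{1,p}$-quasiconvex; Lemma~\ref{ruuscLimit} concerns only existence of the radial limit and says nothing about quasiconvexity. Under the other branch, passing from $\Z\W$ to its pointwise radial limit $\widehat{\Z\W}$ is not shown to preserve quasiconvexity. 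Most importantly, for integrands taking the value $+\infty$ with no upper growth control, $W^{1,p}$-quasiconvexity alone does not yield sequential weak lower semicontinuity of the associated integral functional; establishing that is essentially the content of the representation theorem you are trying to prove, so the reasoning is circular.

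The paper sidesteps this by working at the level of the functional $\F$ rather than $G$, using the abstract ru-usc machinery of Subsection~\ref{Ru-usc functionals}. From \ref{H2} and \ref{H3} one deduces that $\dom F$ is convex with $0$ in its norm-interior, hence $t\,\overline{\dom F}^{w}\subset\dom F$ for all $t\in[0,1[$. One then shows, via Proposition~\ref{main proposition}~(i) and the ru-usc of $\widehat{\Z\W}$, that $\F$ itself is ru-usc in $\dom F$. Corollary~\ref{Extension-envelope} then gives $\F(u)=\lim_{t\to1}I(tu)$ for $u\in\dom\F\setminus\dom F$ directly, where $I=G$, and a Fatou/dominated-convergence argument identifies this limit with $I(u)$. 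In particular the role of \ref{H2} is not, as you suggest, to supply admissible local correctors on cubes where $\nabla u$ escapes $\dom\W(x,\cdot)$; it is to force $\dom F$ to be weakly star-shaped so that $tu\in\dom F$ for every $u\in\dom\F$ and every $t\in]0,1[$, which is what permits the reduction to Theorem~\ref{main result-ruusc1} along the ray $t\mapsto tu$ without ever invoking lower semicontinuity of $G$.
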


\begin{remark} (i) Under the same assumptions the local version of Theorem~\ref{main result-ruusc2} also holds, i.e., 
\begin{align*}
\F(u;O)=\int_O \widehat{\Z\W}(x,\nabla u(x))dx
\end{align*}
for all $O\in\O(\Omega)$ and $u\in\dom \F(\cdot;O)$.

(ii) Theorem~\ref{main result-ruusc2} is mainly used to propose an alternative of the results of  \cite{dearcangelis-monsurro-zappale04,dearcangelis-zappale05} (see Subsection~\ref{scalarcase}). 

(iii) Note that the assertion \ref{H2} implies \ref{H1}. It seems that condition \ref{H2} makes sense when $p=\infty$ because in this case we can show that  \ref{H1} and \ref{H3} imply \ref{H2} (see Corollary~\ref{lipschitz-scalar}). The condition \ref{H2} means that the effective domain has to be ``thick" enough in order to have no gap appears when passing from the representation on $\dom F$ to the representation on $\dom \F$.
\end{remark}

Theorems~\ref{main result-ruusc1} and~\ref{main result-ruusc2} are consequences of the following proposition. Define $\Z F:{W}^{1,p}(\Omega;\RR^m)\times \O(\Omega)\to [0,\infty]$ by
\begin{align*}
\Z F(u;O):=\inf\left\{\liminf_{n\to \infty}\int_O\Z\W(x,\nabla u_n(x))dx:{W}^{1,p}(\Omega;\RR^m)\ni u_n\wto u\right\}.
\end{align*}

\begin{proposition}\label{main proposition} 
Assume that $\W$ satisfies \ref{H0} {\rm(\ref{H0prime} if $p=\infty$)}, \ref{H1}, \ref{H4} and \ref{H3}. Let $O\in\O(\Omega)$.
\begin{enumerate}
\item[(i)] Then for every $u\in{W}^{1,p}(\Omega;\RR^m)$ such that $F(tu)\sinf\infty$ for all $t\in ]0,1[$, we have 
\begin{align*}
\F(tu;O)\le\int_O \Z \W(x,t\nabla u(x))dx
\end{align*}
for all $t\in ]0,1[$.
\item[(ii)] If $\Z\W$ is ru-usc and $W^{1,p}$-quasiconvex then for every $u\in \dom F$ we have 
\begin{align*}
\overline{\Z F}(u;O)\ge\int_O\widehat{\Z\W }(x,\nabla u(x))dx.
\end{align*}
\item[(iii)] If $\W$ is ru-usc then for every $u\in \dom F$ we have 
\begin{align*}
\F(u;O)\ge\int_O\widehat{\Z\W }(x,\nabla u(x))dx.
\end{align*}
\end{enumerate}
\end{proposition}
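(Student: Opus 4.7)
The three assertions have different characters: (i) is an upper-bound construction while (ii) and (iii) are lower-bound estimates of Fonseca--M\"uller blow-up type. I treat them in order.

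For (i), the plan is a Vitali concatenation. Fix $t\in ]0,1[$ and $u$ with $F(tu)\sinf\infty$. Using Vitali's covering theorem, partition $O$ up to a null set into disjoint open cubes $\{Q_i\}$ of diameter less than $\delta$, each centred at $x_i$ that is simultaneously a Lebesgue point of $\nabla u$, a point of differentiability of $u$, and a Lebesgue point of $y\mapsto\Z\W(y,t\nabla u(y))$. On each $Q_i$, the formula \eqref{gendacoro} gives, for arbitrary $\eta>0$, a $\varphi_i\in W^{1,p}_0(Q_i;\RR^m)$ with $\fint_{Q_i}\W(y,t\nabla u(x_i)+\nabla\varphi_i(y))dy\le \Z\W(x_i,t\nabla u(x_i))+\eta$. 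Assemble a candidate by setting $u_{\delta,\eta}(y):=tu(x_i)+t\nabla u(x_i)(y-x_i)+\varphi_i(y)$ on each $Q_i$, and reconcile the mismatch with $tu$ on $\partial Q_i$ by De Giorgi slicing in a thin boundary shell. Convexity of $\dom\W(x,\cdot)$ (from \ref{H3}), the fact $0\in\inte(\Lambda_\W)$ (from \ref{H1}), and differentiability of $u$ at $x_i$ (which makes $u_{\delta,\eta}-tu=o(\diam Q_i)$ on the shell) keep the transition gradients inside the domain, while the shell energy is controlled by \ref{H3} and the local integrability of $\W(\cdot,0)$. A diagonal choice $\eta\to 0$, $\delta\to 0$ then produces $u_n\wto tu$ (weak convergence following from uniform $W^{1,p}$-bounds via \ref{H0}/\ref{H0prime} and $L^\infty$-closeness to $tu$) with $\limsup_n\int_O\W(\cdot,\nabla u_n)\le\int_O\Z\W(\cdot,t\nabla u)dx$, thanks to the Lebesgue-point choice of the $x_i$'s.

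Both (ii) and (iii) proceed by the same blow-up. Fix $u\in\dom F$ and $v_n\wto u$ realizing $\overline{\Z F}(u;O)$ or $\F(u;O)$ respectively. Pass to a subsequence along which the liminf is a limit and the Radon measures $\mu_n$ on $O$---with density $\Z\W(\cdot,\nabla v_n)$ in case (ii) and $\W(\cdot,\nabla v_n)$ in case (iii)---converge weakly-$*$ to a positive Radon measure $\mu$. It suffices to prove
\[
\frac{d\mu}{d\L^d}(x_0)\ge \widehat{\Z\W}(x_0,\nabla u(x_0))\quad\text{at }\L^d\text{-a.e. }x_0\in O.
\]
Select $x_0$ to be a density point of $O$, a point of differentiability of $u$, and a Lebesgue point of $\nabla u$ and of the relevant pointwise quantities. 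On shrinking cubes $Q_{\varepsilon_k}(x_0)$ along which this density is computed, De Giorgi slicing against the dilated affine target $\ell_{x_0}^t(y):=tu(x_0)+t\nabla u(x_0)(y-x_0)$ (with $t\in ]0,1[$) produces $w_{n,k}$ agreeing with $v_n$ off a thin shell and satisfying $w_{n,k}-\ell_{x_0}^t\in W^{1,p}_0(Q_{\varepsilon_k}(x_0);\RR^m)$. As in (i), the choice $t<1$ with convexity of the domain and \ref{H1} keeps the shell gradients in $\dom\W$ at $o(|Q_{\varepsilon_k}(x_0)|)$ energy cost. In case (ii), the $W^{1,p}$-quasiconvexity of $\Z\W$ then yields
\[
\liminf_n\fint_{Q_{\varepsilon_k}(x_0)}\Z\W(y,\nabla w_{n,k}(y))dy\ge \inf_{\varphi}\fint_{Q_{\varepsilon_k}(x_0)}\Z\W(y,t\nabla u(x_0)+\nabla\varphi(y))dy,
\]
whose liminf in $\varepsilon_k$ is $\Z(\Z\W)(x_0,t\nabla u(x_0))=\Z\W(x_0,t\nabla u(x_0))$. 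In case (iii), the same sliced competitor substituted into the definition of $\Z\W$ gives
\[
\fint_{Q_{\varepsilon_k}(x_0)}\W(y,\nabla w_{n,k}(y))dy\ge \inf_{\varphi}\fint_{Q_{\varepsilon_k}(x_0)}\W(y,t\nabla u(x_0)+\nabla\varphi(y))dy,
\]
whose liminf in $\varepsilon_k$ is $\Z\W(x_0,t\nabla u(x_0))$ by definition. In both cases, sending $t\to 1^-$ and invoking ru-usc of $\Z\W$ (inherited from that of $\W$ in case (iii), by the lemma cited after the definition of $\widehat{\Z\W}$) upgrades the bound to $\widehat{\Z\W}(x_0,\nabla u(x_0))$.

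The main obstacle I anticipate is the boundary slicing in the constrained setting: for unconstrained coercive integrands it is routine, but here convex combinations of $\nabla v_n$ with a plain affine target may exit $\dom\W(x,\cdot)$, so the standard De Giorgi recipe does not apply as is. The device of targeting $t\ell_{x_0}$ with $t<1$ and then closing the gap via ru-usc as $t\to 1^-$ is precisely where \ref{H1} and the convexity of the effective domain from \ref{H3} enter essentially. Assumption \ref{H4} is used silently throughout, ensuring that fine-scale averages of $\W(\cdot,\xi)$ recover the pointwise value, so that the Vitali construction in (i) and the blow-up computations in (ii) and (iii) do not lose information at the frozen points.
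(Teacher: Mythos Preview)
Your plan for (i) is workable but differs from the paper's route. The paper does not build a global recovery sequence by Vitali patching; instead it introduces the set function $\M^\ast_F(tu;\cdot)$ of Bouchitt\'e--Fonseca--Mascarenhas type (a Carath\'eodory construction from local Dirichlet infima $\M_F(tu;Q)$), shows $\F(tu;O)\le\M^\ast_F(tu;O)$, proves that $\M^\ast_F(tu;\cdot)$ extends to a Radon measure whose density coincides a.e.\ with $\lim_{\eps\to0}\eps^{-d}\M_F(tu;Q_\eps(x))$, and only then bounds this last quantity by $\Z\W(x,t\nabla u(x))$ via a cut-off replacing $u$ by its affine tangent. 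Your direct construction hides the same cut-off inside the patching, but note that \eqref{gendacoro} is a $\liminf$ over the cube size, so the near-optimality of $\varphi_i$ on a \emph{prescribed} $Q_i$ is not automatic; the paper's indirect route via $\M_F$ and $\M^\ast_F$ is designed to sidestep this.

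For (ii) and (iii) there is a genuine gap. You slice the blown-up sequence $v_n$ against the \emph{scaled} affine map $t\ell_{x_0}$ and assert that the shell costs $o(|Q_{\eps_k}(x_0)|)$. But after blow-up the rescaled $v_n$ converges in $L^\infty$ to the \emph{unscaled} $l_{\nabla u(x_0)}$, so the mismatch $v_n-t\,l_{\nabla u(x_0)}$ does \emph{not} tend to zero: it converges to $(1-t)\,l_{\nabla u(x_0)}$. The cut-off term $\nabla\phi\otimes(v_n-t\,l_{\nabla u(x_0)})$ therefore has size of order $(1-t)/(r-s)$, and the convex-combination bookkeeping needed to apply \ref{H3} on the shell breaks down (the natural weight attached to this term is $(1-\phi)(1-t)$, which vanishes as $\phi\to1$ while the term itself does not). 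The paper avoids this by slicing against the \emph{unscaled} $l_{\nabla u(x_0)}$---so the mismatch genuinely vanishes---and then evaluating the integrand at $t\nabla w_n$; writing $t\nabla w_n=t\big(\phi\nabla v_n+(1-\phi)\nabla u(x_0)\big)+(1-t)\cdot\frac{t}{1-t}\Phi_{n,s,r}$ isolates the cut-off term with a \emph{fixed} weight $1-t$, and \ref{H3} applies cleanly. The price is that on the bulk $sY$ one must compare $L(\cdot,t\nabla v_n)$ with $L(\cdot,\nabla v_n)$, and this is precisely where ru-usc does its work: the bulk contributes $\frac{d\mu}{dx}(x_0)+\Delta^a_L(t)\big(a(x_0)+\frac{d\mu}{dx}(x_0)\big)$, which is integrated over $O$ before sending $t\to1$. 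In your outline, by contrast, ru-usc is invoked only to pass from $\Z\W(x_0,t\nabla u(x_0))$ to $\widehat{\Z\W}(x_0,\nabla u(x_0))$---but that passage is just the definition of $\widehat{\Z\W}$ and needs no hypothesis. You have mislocated the role of ru-usc, and the shell estimate as you describe it does not close.
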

In fact the $\liminf$ in the definition of $\Z\W$ is a limit if \ref{H4} and \ref{regintegrand} hold.
\begin{proposition}\label{exist-ZL} If $L:\Omega\times\MM^{m\times d}$ is a Borel measurable integrand satisfying \ref{H4} and \ref{regintegrand} then for almost all $x\in\Omega$ and for every $\xi\in \dom L(x,\cdot)$
\[
\Z L(x,\xi)=\lim_{\eps\to 0}\inf\left\{\int_Y L(x+\eps y,\xi+\nabla \varphi(y)dy:\varphi\in W^{1,p}_0(Y;\RR^m)\right\}.
\]
\end{proposition}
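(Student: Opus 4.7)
The only non-tautological inequality is $\limsup_{\eps\to 0}\theta_\eps(x,\xi)\le \Z L(x,\xi)$, where I set $\theta_\eps(y,\xi):=\inf_{\varphi\in W^{1,p}_0(\Q_\eps(y);\RR^m)}\fint_{\Q_\eps(y)}L(z,\xi+\nabla\varphi(z))\,dz$ (via the change of variables $z=y+\eps\cdot$) so that $\Z L(y,\xi)=\liminf_{\eps\to 0}\theta_\eps(y,\xi)$. My plan is to dominate $\theta_\eps(x,\xi)$ by the Lebesgue average $\fint_{\Q_\eps(x)}\Z L(z,\xi)\,dz$ up to an arbitrarily small error, using a Vitali construction, and then invoke Lebesgue differentiation of $\Z L(\cdot,\xi)$ at $x$. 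Preliminarily, \ref{regintegrand} yields $\omega^L(\xi):=\limsup_{\delta\to 0}\omega^L_\delta(\xi)<\infty$ for $\xi\in\Xi_L$, so $\theta_\delta(y,\xi)\le\omega^L_{\delta\sqrt d}(\xi)$ for $\Q_\delta(y)\subset\Omega$ and hence $\Z L(\cdot,\xi)\le \omega^L(\xi)+1$ a.e.\ on interior subsets, giving the local integrability needed for Lebesgue differentiation; meanwhile, \ref{H4} supplies $L(\cdot,\xi)\in L^1_{\rm loc}$ near $x$ through $\fint_{\Q_\eps(x)}L(y,\xi)\,dy\to L(x,\xi)<\infty$, which will control the residual boundary terms via absolute continuity of the Lebesgue integral.

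The core construction proceeds as follows. Fix $\eta>0$; for a.e.\ $y\in\Q_\eps(x)$, simultaneously (i) $y$ is a Lebesgue point of $\Z L(\cdot,\xi)$, and (ii) by the very definition of the $\liminf$ there exist arbitrarily small $\delta>0$ with $\theta_\delta(y,\xi)\le \Z L(y,\xi)+\eta$. The cubes $\Q_\delta(y)\subset\Q_\eps(x)$ realising (ii) together with $\lvert\fint_{\Q_\delta(y)}\Z L(z,\xi)\,dz-\Z L(y,\xi)\rvert<\eta$ form a Vitali cover of $\Q_\eps(x)$; I extract a countable disjoint subfamily, truncate to a finite $\{\Q_{\delta_i}(y_i)\}_{i=1}^N$ whose complement $R_N$ satisfies $\int_{R_N}L(z,\xi)\,dz<\eta\lvert\Q_\eps(x)\rvert$ (absolute continuity), pick near-optimisers $\varphi_i\in W^{1,p}_0(\Q_{\delta_i}(y_i);\RR^m)$ with $\fint_{\Q_{\delta_i}(y_i)}L(z,\xi+\nabla\varphi_i)\,dz\le \Z L(y_i,\xi)+2\eta$, and glue by extension by zero into a single $\varphi\in W^{1,p}_0(\Q_\eps(x);\RR^m)$. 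Summing the cube-by-cube energies and inserting the Lebesgue-point bound $\Z L(y_i,\xi)\le \fint_{\Q_{\delta_i}(y_i)}\Z L(z,\xi)\,dz+\eta$ yields
\[
\theta_\eps(x,\xi)\le\fint_{\Q_\eps(x)}\Z L(z,\xi)\,dz+4\eta.
\]
Sending $\eps\to 0$ at a Lebesgue point $x$ of $\Z L(\cdot,\xi)$ and then $\eta\to 0$ gives the required inequality.

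The principal obstacle is the combinatorial delicacy of imposing the two independent a.e.\ conditions on $y$ (Lebesgue point of $\Z L(\cdot,\xi)$ and $\liminf$-realisation in (ii)) on a common set of full measure, while ensuring the restricted family of admissible $\Q_\delta(y)$ still qualifies as a Vitali cover — and, underpinning all of this, the measurability of $\Z L(\cdot,\xi)$ needed for the Lebesgue differentiation step. Hypotheses \ref{regintegrand} and \ref{H4} are designed exactly for this: the former delivers the uniform bound and hence the local integrability of $\Z L(\cdot,\xi)$, the latter the compatible pointwise-average behaviour of $L(\cdot,\xi)$ needed to make the residual estimate on $R_N$ quantitative.
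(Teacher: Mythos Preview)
Your route differs substantially from the paper's. You aim to bound $\theta_\eps(x,\xi)$ directly by the Lebesgue average $\fint_{\Q_\eps(x)}\Z L(z,\xi)\,dz$ through a Vitali decomposition of $\Q_\eps(x)$ into sub-cubes on which the $\liminf$ is nearly realised, and then invoke Lebesgue differentiation of $\Z L(\cdot,\xi)$. The paper (Section~\ref{Proof of Theorem exist-ZL}) instead builds a Carath\'eodory-type set function $\M^\sharp_\xi$ from the local Dirichlet minima $\M_\xi(\Q)$, shows it behaves sufficiently like a measure (Lemma~\ref{outermeasure}), introduces upper and lower derivates $\M^\pm_\ast$ together with a weak Borel-approximation property for their level sets (Lemma~\ref{measurability-m}), and proves $\M^+_\ast\le\M^-_\ast$ a.e.\ by density-type estimates (Lemmas~\ref{gmt-classics} and~\ref{existlimitcor}); hypothesis~\ref{H4} enters only at the end, to compare the derivate of $\M_\xi$ with that of $\M^\sharp_\xi$, and~\ref{regintegrand} ensures finiteness of the uniform upper density~$\omega$.

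The issue you flag in your last paragraph --- the measurability of $\Z L(\cdot,\xi)$ --- is not a peripheral nuisance but the central difficulty, and you have not resolved it. For a Borel integrand $L$ with no semicontinuity assumption in $\xi$, the infimum defining $\theta_\eps(\cdot,\xi)$ ranges over the uncountable set $W^{1,p}_0(Y;\RR^m)$, and separability of that space does not help without lower semicontinuity of $L(x,\cdot)$; your claim that \ref{regintegrand} ``delivers the uniform bound and hence the local integrability of $\Z L(\cdot,\xi)$'' presupposes measurability rather than supplies it. The paper's machinery is designed precisely to sidestep this: it never needs $\Z L(\cdot,\xi)$ itself to be a measurable function, working instead with the set function $\M_\xi$ and its Carath\'eodory regularisation $\M^\sharp_\xi$, whose derivates are handled through the ad hoc Vitali-based Borel-approximation of Lemma~\ref{measurability-m}. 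If you can independently justify that $\Z L(\cdot,\xi)$ is (Lebesgue) measurable --- for instance via a projection/measurable-selection argument yielding universal measurability --- then your argument becomes both valid and more transparent than the paper's; as written, however, the Lebesgue-differentiation step on which the whole construction rests is unsupported.
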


The plan of paper is as follow. In Sect.~\ref{Some consequences} we give some applications in the case where $\W$ satisfies quasiconvex growth, we show that a full integral representation holds if the functional associated to the quasiconvex growth is sequentially weakly lsc on $W^{1,p}(\Omega;\RR^m)$. The scalar case is treated by using Theorem~\ref{main result-ruusc2} and adding some assumptions on the regularity of $\dom \W(x,\cdot)$. Finally an application of Theorem~\ref{main result-ruusc1} is developed in the context of relaxation with integrand satisfying $\p(x)$-growth.

In Sect.~\ref{Preliminary results} we first establish some results on $L$ and the envelope $\Z L$ needed for the proof of Proposition~\ref{main proposition}. Then we introduce the concept of ru-usc functionals and state abstract results needed in the proof of Theorem~\ref{main result-ruusc2}.   

In Sect.~\ref{Proof of Theorem 1} the proofs of Theorem~\ref{main result-ruusc1} and Theorem~\ref{main result-ruusc2} are given by using Proposition~\ref{main proposition}. The proof of Theorem~\ref{main result-ruusc2} use the abstract result on ru-usc functionals of Subsection~\ref{Ru-usc functionals} and especially Corollary~\ref{Extension-envelope}.

The Sect.~\ref{Proof of Proposition (i)} and Sect.~\ref{Proof of Proposition (iii)} are devoted to the proof of Proposition~\ref{main proposition}. The strategy to prove the upper bound part (i) of Proposition~\ref{main proposition} is inspired by the paper of \cite{bouchitte-fonseca-mascarenhas98}. They develop a new method to prove integral representations for relaxed variational functionals and $\Gamma$-limit of variational functionals. Roughly, their method consists, when  $\F(u;\cdot)$ is a Radon measure absolutely continuous with respect to a fixed finite nonnegative Radon measure, to express the Radon-Nikodym derivative of $\F(u;\cdot)$ in terms of minima of local Dirichlet problems for $\F$. However, in our case, we use an indirect method for the proof of Proposition~\ref{main proposition} (i) in the sense that we do not prove directly that $\F(u;\cdot)$ is a Radon measure. Note that some similar ideas considering the link between the relaxed integrand and minima of local Dirichlet problems appear in \cite{dalmaso-modica86} and in the context of $G$-convergence in \cite{degiorgi-spagnolo73}. For the proof of the lower bound parts (ii) and (iii) of Proposition~\ref{main proposition}, we use the techniques of localization (also known as blow-up method) and cut-off method introduced by \cite{fonseca-muller92}.

In Sect.~\ref{Proof of Theorem exist-ZL} we give the proof of Proposition~\ref{exist-ZL} by using some measure arguments. More precisely, the proof consists to see $\Z L(\cdot,\xi)$ as derivate of a set function (see for instance \cite{hartnett-kruse, bongiorno}).
\section{Applications}\label{Some consequences}

\subsection{Relaxation with quasiconvex growth}\label{quasiconvexgrowth}
Let $p\in ]d,\infty[$. Let $G:\MM^{m\times d}\to[0,\infty]$ be a Borel measurable integrand. Consider the assertions:
\begin{hypg}
\item\label{ass1-G} G is $W^{1,p}$-quasiconvex, i.e., for every $\xi\in\MM^{m\times d}$ 
\[
G(\xi)=\Z G(\xi).
\]
\item\label{ass2-G} $\W$ has $G$-growth, i.e., there exist $\alpha,\beta\ssup 0$ such that for every $(x,\xi)\in\Omega\times\MM^{m\times d}$
\[
\alpha G(\xi)\le \W(x,\xi)\le \beta(1+G(\xi)).
\]
\end{hypg} 

\begin{theorem}\label{main1} Assume that $G$ satisfies \ref{H0}, \ref{H3}, \ref{ass1-G}, \ref{ass2-G} and $0\in\inte(\dom G)$.  If either $\W$ is ru-usc or $\Z\W$ is both ru-usc and $W^{1,p}$-quasiconvex then  \eqref{intregral-representation} holds for all $u\in W^{1,p}(\Omega;\RR^m)$ such that $\int_\Omega G(\nabla u(x))dx\sinf\infty$. Moreover \eqref{intregral-representation} holds for all $u\in\dom\F$ if \[W^{1,p}(\Omega;\RR^m)\ni u\mapsto \int_\Omega G(\nabla u(x))dx\] is sequentially weakly lower semicontinuous (swlsc) on $W^{1,p}(\Omega;\RR^m)$.
\end{theorem}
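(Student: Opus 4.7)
The plan is to reduce Theorem~\ref{main1} to Theorem~\ref{main result-ruusc1} applied directly to $\W$, by showing that the $G$-growth condition~\ref{ass2-G} together with the assumptions on $G$ transfers the structural hypotheses~\ref{H0}, \ref{H3} and \ref{H14} from $G$ to $\W$, and then to use the swlsc assumption to bridge between $\dom F$ and $\dom\F$ in the second assertion.

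The straightforward verifications are the following. Condition~\ref{H0} for $\W$ follows from $\alpha c|\xi|^p\le\alpha G(\xi)\le\W(x,\xi)$. Condition~\ref{H3} for $\W$ is obtained by applying \ref{H3} for $G$ inside the upper bound $\W(x,t\xi+(1-t)\zeta)\le\beta(1+G(t\xi+(1-t)\zeta))$ and then replacing $G(\xi),G(\zeta)$ by $\W(x,\xi)/\alpha,\W(x,\zeta)/\alpha$ via the lower bound. For~\ref{H1} I would use $0\in\inte(\dom G)$ to pick $\rho_0>0$ with $\overline{Q}_{\rho_0}(0)\subset\dom G$; the upper bound then places $\W(\cdot,\xi)\in L^\infty(\Omega)$ for every such $\xi$, so $\overline{Q}_{\rho_0}(0)\subset\Lambda_\W$. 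For~\ref{regintegrand}, testing the inner infimum in $\omega_\delta^\W(\xi)$ with $\varphi=0$ yields $\omega_\delta^\W(\xi)\le\beta(1+G(\xi))<\infty$ for every $\xi\in\dom G$; combined with the inclusion $\dom\W(x,\cdot)\subset\dom G$ (which follows from $\alpha G\le\W$), this gives $\dom\W(x,\cdot)\subset\Xi_\W$.

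The main obstacle is \ref{H4}. For each $\xi\in\dom G$, $\W(\cdot,\xi)\in L^\infty(\Omega)$, so Lebesgue's differentiation theorem furnishes a null set $N_\xi$ outside of which $\W(x,\xi)=\lim_{\eps\to 0}\fint_{\Q_\eps(x)}\W(y,\xi)dy$. I would fix a countable dense subset $D\subset\dom G$, set $\Omega_0=\Omega\setminus\bigcup_{\xi\in D}N_\xi$, and obtain the Lebesgue property simultaneously for every $\xi\in D$ on the full-measure set $\Omega_0$. Extending to an arbitrary $\xi\in\dom\W(x,\cdot)\subset\dom G$ is the delicate step; it would rely on approximating $\xi=\lim\xi_k$ with $\xi_k\in D$ and passing to the limit in the averaged integrals, controlling the remainders uniformly in $\eps$ by means of the convex-combination inequality~\ref{H3} applied to $G$ together with the upper bound in~\ref{ass2-G}. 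This passage to the limit, which requires a local boundedness on $\inte(\dom G)$ extracted from these structural properties, is where I expect the argument to demand the most care.

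Once \ref{H0}, \ref{H3} and \ref{H14} are established for $\W$, Theorem~\ref{main result-ruusc1} yields $\F(u)=\int_\Omega\widehat{\Z\W}(x,\nabla u(x))dx$ for every $u\in\dom F$. The upper growth $\W\le\beta(1+G)$ shows that $\{u\in W^{1,p}(\Omega;\RR^m):\int_\Omega G(\nabla u)dx<\infty\}\subset\dom F$, which is the first assertion. For the ``moreover'' part, assume $v\mapsto\int_\Omega G(\nabla v)dx$ is swlsc on $W^{1,p}(\Omega;\RR^m)$ and fix $u\in\dom\F$. Choose $u_n\wto u$ with $\sup_n\int_\Omega\W(\cdot,\nabla u_n)dx<\infty$; the lower bound gives $\sup_n\int_\Omega G(\nabla u_n)dx<\infty$, swlsc gives $\int_\Omega G(\nabla u)dx<\infty$, and the upper bound finally places $u\in\dom F$, so that the first assertion applies to conclude.
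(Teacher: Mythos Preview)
Your overall strategy coincides with the paper's: verify that the $G$-growth transfers the hypotheses \ref{H0}, \ref{H3}, \ref{H14} to $\W$, invoke Theorem~\ref{main result-ruusc1} on $\dom F$, and then use swlsc of the $G$-functional to identify $\dom\F$ with $\dom F$. Your checks of \ref{H0}, \ref{H1}, \ref{H3}, \ref{regintegrand} and the ``moreover'' argument are correct and essentially the same as the paper's (the paper quotes Lemma~\ref{L growth} for \ref{H3}, which is exactly your direct computation; and the paper states the equality $\dom F=\{u:\int_\Omega G(\nabla u)\,dx<\infty\}$ rather than your one-sided inclusion, but the inclusion suffices for the first assertion).

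The divergence is at \ref{H4}. The paper dispatches the whole package \ref{H14} in one line, asserting from \ref{ass2-G} that $\dom\W(x,\cdot)=\dom G=\Lambda_\W=\Lambda_\W(x)=\Xi_\W$ a.e.\ without further comment. You are more scrupulous in noticing that Lebesgue differentiation only gives, for each fixed $\xi\in\dom G$, a null set $N_\xi$ outside of which $\xi\in\Lambda_\W(x)$; the exceptional set depends on $\xi$, and \ref{H4} demands a single null set working for all $\xi$ simultaneously.

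However, your proposed remedy --- a countable dense $D\subset\dom G$ plus approximation --- does not close the gap under the standing hypotheses. To pass from $\xi_k\in D$ to a limit $\xi$, you would need both $\W(x,\xi_k)\to\W(x,\xi)$ and $\fint_{\Q_\eps(x)}\W(y,\xi_k)\,dy\to\fint_{\Q_\eps(x)}\W(y,\xi)\,dy$ (the latter uniformly in $\eps$), and this requires continuity of $\W(x,\cdot)$ in the second variable. But $\W$ is only assumed Borel measurable; neither \ref{H3} for $G$ nor the two-sided bound \ref{ass2-G} yields continuity --- they give only local boundedness, and a bounded Borel function need not be continuous. So the step you flag as ``where the argument demands the most care'' is in fact a genuine gap in your argument. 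The paper's proof simply asserts the conclusion at this point; your proposal is more candid about the difficulty but does not resolve it. A clean fix would require an additional regularity hypothesis on $\W$ (for instance, continuity of $\W(x,\cdot)$ on $\inte(\dom G)$, or a Carath\'eodory condition).
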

\begin{proof} By \eqref{ass2-G}, it is easy to see that $\dom\W(x,\cdot)=\dom G=\Lambda_\W=\Lambda_\W(x)=\Xi_\W$ a.e. in $\Omega$, so $\W$ satisfies \ref{H14} since $0\in\inte(\dom G)$. We have also that $\W$ satisfies \ref{H0} since $G$ satisfies \ref{H0}. By Lemma~\ref{L growth}, $G$ satisfies \ref{H3} if and only if $\W$ satisfies \ref{H3}. Applying Theorem~\ref{main result-ruusc1} we obtain \eqref{intregral-representation} for all $u\in\dom F$. But $\dom F=\{u\in W^{1,p}(\Omega;\RR^m):\int_\Omega G(\nabla u)dx\sinf\infty\}$ since \eqref{ass2-G}. If we assume that $W^{1,p}(\Omega;\RR^m)\ni u\mapsto \int_\Omega G(\nabla u(x))dx$ is swlsc on $W^{1,p}(\Omega;\RR^m)$, then again by using \eqref{ass2-G}, $\dom \F=\{u\in W^{1,p}(\Omega;\RR^m):\int_\Omega G(\nabla u)dx\sinf\infty\}$, thus $\dom F=\dom \F$ and the integral representation holds for all $u\in\dom \F$.
\end{proof}

\subsection{Relaxation in the scalar case}\label{scalarcase}
Let $L:\Omega\times \MM^{m\times d}\to [0,\infty]$ be an integrand. We denote by $L^{\ast\ast}:\Omega\times\MM^{m\times d}\to [0,\infty]$ the convex lower semicontinuous envelope of $L(x,\cdot)$ for each $x\in\Omega$, i.e., 
\begin{align*}
L^{\ast\ast}(x,\xi):=\sup\big\{g(x,\xi): g(x,\cdot)\mbox{ is convex and lsc, }\;\; g(x,\cdot)\le L(x,\cdot)\big\}
\end{align*}
for all $(x,\xi)\in\Omega\times\MM^{m\times d}$.

\medskip

To show that the relaxed integrand $\widehat{\Z L}$ coincides with $L^{\ast\ast}$ when $m=1$ we need assumption ``\`a la De Arcangelis and all" (see \cite{dearcangelis-monsurro-zappale04,dearcangelis-zappale05}). For each $\eps\ssup 0$ define the multifunction $D_\eps:\Omega \dto\MM^{m\times d}$ by 
\begin{align*}
D_\eps(x):=\bigcup_{\varphi\in W^{1,p}_0(\Q_\eps(x);\RR^m)}\bigcup_{\substack{N\subset \Q_\eps(x)\\ \vert N\vert=0}}\quad\bigcap_{y\in \Q_\eps(x)\setminus N}{\inte(\dom L(y,\cdot))-\{\nabla \varphi(y)\}}.
\end{align*}
Consider the following assertion:
\begin{hypc}
\item\label{essentielim} for almost all $x\in\Omega$
\[
\bigcup_{\delta> 0}\bigcap_{\eps\in ]0,\delta[}D_\eps(x)\subset{\dom L(x,\cdot)}.
\]
\end{hypc}

\begin{lemma}\label{convpre} If $L:\Omega\times \MM^{m\times d}\to [0,\infty]$ is a Borel measurable satisfying \ref{essentielim}, \ref{H1}, \ref{H4} and \ref{regintegrand} then for a.a. $x\in\Omega$ and for every $t\in ]0,1[$ we have
\begin{align*}
t\dom \widehat{\Z L}(x,\cdot)\subset \dom L(x,\cdot).
\end{align*}
\end{lemma}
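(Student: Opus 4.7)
The strategy is to exploit assumption \ref{essentielim} by showing that $t\xi$ lies in $D_\eps(x)$ for every sufficiently small $\eps>0$, not merely along a subsequence, and then invoke the inclusion $\bigcup_{\delta>0}\bigcap_{\eps\in\,]0,\delta[}D_\eps(x)\subset\dom L(x,\cdot)$.

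The starting point is that $\xi\in\dom\widehat{\Z L}(x,\cdot)$ provides, by definition of $\widehat{\Z L}$, some $s\in\,]t,1[$ with $\Z L(x,s\xi)<\infty$. The first key step is to invoke Proposition~\ref{exist-ZL}---whose hypotheses are precisely \ref{H4} and \ref{regintegrand}---to rewrite $\Z L(x,s\xi)$ as a genuine limit rather than a $\liminf$. This yields a threshold $\delta>0$ such that for every $\eps\in\,]0,\delta[$ one can pick $\psi_\eps\in W^{1,p}_0(Q_\eps(x);\RR^m)$ with
\[
\fint_{Q_\eps(x)}L\bigl(y,s\xi+\nabla\psi_\eps(y)\bigr)\,dy<\infty,
\]
so that $s\xi+\nabla\psi_\eps(y)\in\dom L(y,\cdot)$ for almost every $y\in Q_\eps(x)$.

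Next, I would push this information into the interior of $\dom L(y,\cdot)$ at the expense of the contraction factor $t/s<1$. By \ref{H1} combined with \ref{H4} one has $\overline Q_{\rho_0}(0)\subset\dom L(y,\cdot)$ for almost every $y$, hence $0\in\inte(\dom L(y,\cdot))$. Using the standing convexity of $\dom L(y,\cdot)$ together with the classical fact that $\lambda a+(1-\lambda)b\in\inte C$ whenever $C$ is convex, $a\in C$, $b\in\inte C$ and $\lambda\in[0,1)$, one obtains for almost every $y\in Q_\eps(x)$
\[
t\xi+\tfrac{t}{s}\nabla\psi_\eps(y)=\tfrac{t}{s}\bigl(s\xi+\nabla\psi_\eps(y)\bigr)+\bigl(1-\tfrac{t}{s}\bigr)\cdot 0\in\inte(\dom L(y,\cdot)).
\]
Setting $\varphi_\eps:=(t/s)\psi_\eps\in W^{1,p}_0(Q_\eps(x);\RR^m)$, this is exactly the statement $t\xi\in D_\eps(x)$.

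Since the last conclusion holds for every $\eps\in\,]0,\delta[$, one gets $t\xi\in\bigcup_{\delta>0}\bigcap_{\eps\in\,]0,\delta[}D_\eps(x)$, and \ref{essentielim} then delivers $t\xi\in\dom L(x,\cdot)$ for almost every $x\in\Omega$. The main obstacle---and the reason Proposition~\ref{exist-ZL} is indispensable---is the upgrade from the sequential information hidden in the $\liminf$ defining $\Z L$ to uniform information on every small scale, since \ref{essentielim} demands membership in $D_\eps(x)$ throughout an entire range of scales rather than only along a subsequence; the remainder of the argument is a convex-analytic retraction towards the interior point $0$ supplied by \ref{H1}.
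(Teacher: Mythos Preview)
Your proof is correct and follows essentially the same route as the paper's: invoke Proposition~\ref{exist-ZL} to turn the $\liminf$ defining $\Z L$ into a genuine limit, extract $\psi_\eps$ with finite energy on every small cube, and then use the convexity of $\dom L(y,\cdot)$ together with $0\in\inte(\dom L(y,\cdot))$ (from \ref{H1}) to retract into the interior and verify membership in $D_\eps(x)$, so that \ref{essentielim} concludes. The only cosmetic difference is that the paper fixes $\xi\in\dom\Z L(x,\cdot)$ and scales by $t$ directly, whereas you start from $\xi\in\dom\widehat{\Z L}(x,\cdot)$, choose $s\in\,]t,1[$ with $s\xi\in\dom\Z L(x,\cdot)$, and scale by $t/s$; your extra step is precisely the explicit passage from $\dom\widehat{\Z L}$ to $\dom\Z L$ that the paper leaves to the reader.
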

\begin{proof} Fix $x\in\Omega$ such that \ref{essentielim} holds and $0\in \inte(\dom L(x,\cdot))$. Fix $\xi\in \dom \Z L(x,\cdot)$. Then by Proposition~\ref{exist-ZL} there exists $\eps_0\ssup 0$ such that 
\begin{align*}
\sup_{\eps\in ]0,\eps_0[}\fint_{\Q_\eps(x)}L(y,\xi+\nabla \varphi_\eps(y))dy\sinf\infty,
\end{align*}
for some $\{\varphi_\eps\}_{\eps\in ]0,\eps_0[},\varphi_\eps\in W^{1,p}_0(\Q_\eps(x);\RR^m)$. It follows that for every $\eps\in ]0,\eps_0[$ there exists a negligible set $N_x^\eps\subset\Q_\eps(x)$ such that for every $y\in \Q_\eps(x)\ssetminus N_x^\eps$ we have $\xi+\nabla\varphi_\eps(y)\in \dom L(y,\cdot)$. It holds $t\xi+t\nabla\varphi_\eps(y)\in t\dom L(y,\cdot)$ for all $y\in \Q_\eps(x)\ssetminus N_x^\eps$ and all $t\in]0,1[$. Hence $t\xi\in \cap_{y\in \Q_\eps(x)\ssetminus N_x^\eps}t\dom L(y,\cdot)-\{t\nabla\varphi_\eps(y)\}$ for all $t\in]0,1[$. By convexity of $\dom L(y,\cdot)$ and the fact that by \ref{H1}  we have $0\in\inte(\dom L(y,\cdot))$ for all $y\in\Q_\eps(x)\ssetminus{N^\prime}$ for some negligible set $N^\prime$, we deduce $t\dom L(y,\cdot)\subset \inte(\dom L(y,\cdot))$ for all $y\in\Q_\eps(x)\ssetminus{N^\prime}$ for all $t\in]0,1[$. It follows that for every $t\in]0,1[$
\[
t\xi\in \bigcap_{y\in \Q_\eps(x)\ssetminus \left(N_x^\eps\cup N^\prime\right)}\inte(\dom L(y,\cdot))-\{t\nabla\varphi_\eps(y)\}.
\]
From \ref{essentielim} we deduce that $t\xi\in{\dom L(x,\cdot)}$ for all $t\in]0,1[$ which completes the proof.
\end{proof}
\begin{lemma}\label{convexityZL} If the assumptions of Theorem~\ref{main result-ruusc1} and \ref{essentielim} hold then for a.a. $x\in\Omega$ the integrand $\widehat{\Z L}(x,\cdot)$ is rank-one convex and equals to $\overline{\Z L}(x,\cdot)$ the lsc envelope of $\Z L(x,\cdot)$.
\end{lemma}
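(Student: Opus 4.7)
The plan is to establish, in order, (a) that $\widehat{\Z L}(x,\cdot)$ is rank-one convex; (b) that it is locally Lipschitz on $\inte(\dom L(x,\cdot))$ and lower semicontinuous on $\MM^{m\times d}$; (c) that it coincides with $\overline{\Z L}(x,\cdot)$. First I fix $x\in\Omega$ outside a negligible set so that \ref{essentielim}, the conclusion of Lemma~\ref{convpre}, the ru-usc property for $\Z L$ (available in both branches of Theorem~\ref{main result-ruusc1} via Lemma~\ref{ZLruusc}), and $0\in\inte(\dom L(x,\cdot))$ all hold; Lemma~\ref{ruuscLimit} then turns the $\liminf$ defining $\widehat{\Z L}(x,\xi)$ into a genuine limit.

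For (a), take $\xi_1,\xi_2$ with $\xi_1-\xi_2=a\otimes n$ rank-one, $\lambda\in ]0,1[$, set $\xi:=\lambda\xi_1+(1-\lambda)\xi_2$ and assume $\widehat{\Z L}(x,\xi_i)<\infty$. For every $t\in ]0,1[$, Lemma~\ref{convpre} places $t\xi_1,t\xi_2\in\dom L(x,\cdot)$, and convexity gives $t\xi\in\dom L(x,\cdot)$. Testing the $W^{1,p}$-quasiconvex inequality for $\Z L$ (directly available in the second branch of Theorem~\ref{main result-ruusc1}, and obtainable a posteriori from the integral representation in the first) at $t\xi$ with the classical laminate $\varphi_\delta\in W^{1,p}_0(Y;\RR^m)$ whose gradient equals $t(1-\lambda)\,a\otimes n$ on a measurable set of fraction $\lambda$ and $-t\lambda\,a\otimes n$ on a set of fraction $1-\lambda$ inside $(1-\delta)Y$, multiplied by a smooth cutoff vanishing on $\partial Y$, one obtains $\Z L(x,t\xi)\le\int_Y\Z L(x,t\xi+\nabla\varphi_\delta(y))\,dy$. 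On the interior laminate portion this integrand contributes $\lambda\Z L(x,t\xi_1)+(1-\lambda)\Z L(x,t\xi_2)$; on the $O(\delta)$ transition shell the gradients lie in the convex hull of $\{t\xi,t\xi_1,t\xi_2\}\subset\dom L(x,\cdot)$, and iterating \ref{H3} bounds the integrand there by $C(1+L(x,t\xi_1)+L(x,t\xi_2))$, so the shell contributes $o(1)$. Letting $\delta\to 0$ and then $t\to 1^-$ produces $\widehat{\Z L}(x,\xi)\le\lambda\widehat{\Z L}(x,\xi_1)+(1-\lambda)\widehat{\Z L}(x,\xi_2)$.

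For (b) and (c), iterating \ref{H3} shows that $L(x,\cdot)$ is locally bounded on $\inte(\dom L(x,\cdot))$, hence so is $\widehat{\Z L}(x,\cdot)\le L(x,\cdot)$; rank-one convexity then yields local Lipschitz continuity on this open convex set. Any $\xi\in\dom\widehat{\Z L}(x,\cdot)\subset\overline{\dom L(x,\cdot)}$ (Lemma~\ref{convpre}) satisfies $t\xi\in\inte(\dom L(x,\cdot))$ for $t\in ]0,1[$ by convexity together with $0\in\inte$; for any $\zeta_n\to\xi$, interior continuity gives $\widehat{\Z L}(x,t\zeta_n)\to\widehat{\Z L}(x,t\xi)$, and combining this with the ru-usc estimate of $\widehat{\Z L}(x,t\zeta_n)$ in terms of $\widehat{\Z L}(x,\zeta_n)$ and the identity $\widehat{\Z L}(x,t\xi)\to\widehat{\Z L}(x,\xi)$ as $t\to 1^-$ delivers $\widehat{\Z L}(x,\xi)\le\liminf_n\widehat{\Z L}(x,\zeta_n)$; outside $\overline{\dom L(x,\cdot)}$ the function equals $+\infty$ and lsc is trivial. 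Thus $\widehat{\Z L}(x,\cdot)$ is globally lsc; together with $\widehat{\Z L}(x,\cdot)\le\Z L(x,\cdot)$ (from ru-usc) this forces $\widehat{\Z L}(x,\cdot)\le\overline{\Z L}(x,\cdot)$ by the defining property of the lsc envelope, while the reverse chain $\overline{\Z L}(x,\xi)\le\liminf_{t\to 1^-}\overline{\Z L}(x,t\xi)\le\liminf_{t\to 1^-}\Z L(x,t\xi)=\widehat{\Z L}(x,\xi)$ is automatic. The main obstacle is the laminate step: for extended-valued $\Z L$ the boundary cutoff naturally risks producing transition gradients outside $\dom\Z L(x,\cdot)$, and it is precisely the rescaling by $t<1$ coupled with \ref{H3} that keeps them inside a compact subset of $\inte(\dom L(x,\cdot))$ where the integrand is quantitatively controlled.
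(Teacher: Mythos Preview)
Your rank-one convexity argument in (a) rests on the pointwise inequality
\[
\Z L(x,t\xi)\le\int_Y \Z L\bigl(x,\,t\xi+\nabla\varphi_\delta(y)\bigr)\,dy,
\]
and this is where the gap lies. Under the paper's hypotheses, $W^{1,p}$-quasiconvexity of $\Z L$ means $\Z L=\Z(\Z L)$, i.e.\ an identity involving $\liminf_{\eps\to 0}\int_Y \Z L(x+\eps y,\cdot)\,dy$; passing from that to the \emph{frozen-$x$} inequality above would require a Lebesgue-point property of $\Z L(\cdot,\zeta)$ (an analogue of \ref{H4} for $\Z L$) that is nowhere established. In the first branch of Theorem~\ref{main result-ruusc1} the situation is worse: there $\Z\W$ is not assumed quasiconvex at all, and the paper explicitly remarks that it is not known whether $\Z\W=\Z(\Z\W)$ when only $\W$ is ru-usc. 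Your parenthetical ``obtainable a posteriori from the integral representation'' does not supply this inequality; the integral representation gives information about $\widehat{\Z L}$, not a quasiconvexity statement for $\Z L$.

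The paper bypasses this by working at the level of the relaxed functional $\F$, which is automatically sequentially weakly lsc. Using the local integral representation $\F(tl_\chi;Q_\eps(x_0))=\int_{Q_\eps(x_0)}\widehat{\Z L}(y,t\chi)\,dy$ for affine maps $l_\chi$ (valid because $tl_\chi\in\dom F(\cdot;Q_\eps(x_0))$ once $t\chi\in\dom L(x_0,\cdot)\subset\Lambda_L(x_0)$), the zig-zag construction produces laminates $u_k$ between $tl_\xi$ and $tl_\zeta$ whose gradients take exactly the two values $t\xi,t\zeta$; no cutoff layer is needed because weak convergence (not a boundary condition) defines $\F$. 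Lower semicontinuity of $\F$ then gives the rank-one inequality for $\widehat{\Z L}$ directly, after dividing by $\eps^d$ and passing to the Lebesgue point. A secondary imprecision in your laminate step: after multiplication by a cutoff the transition gradients are \emph{near}, not \emph{in}, the convex hull of $\{t\xi,t\xi_1,t\xi_2\}$, because of the $\psi\otimes\nabla\phi$ term; this is fixable by making the laminate very fine before cutting off, but your statement as written is not correct. For part (c), your hands-on lsc argument is a legitimate alternative to the paper's appeal to Theorem~\ref{Extension-Result-for-ru-usc-Functions}, though longer.
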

\begin{proof} We have to show that for a.a. $x\in\Omega$, for every $\xi,\zeta\in\MM^{m\times d}$ such that ${\rm rank}(\xi-\zeta)\le 1$ and for every $\tau\in ]0,1[$ we have
\begin{align*}
\widehat{\Z L}(x,\tau \xi+(1-\tau)\zeta)\le \tau \widehat{\Z L}(x,\xi)+(1-\tau)\widehat{\Z L}(x,\zeta).
\end{align*}
Fix $x_0\in \Omega^\prime$ where 
\begin{align*}
 \Omega^\prime:=\left\{x\in\Omega:\forall t\in]0,1[\quad t\dom \widehat{\Z L}(x,\cdot)\subset \dom L(x,\cdot)\subset \Lambda_L(x)\right\}.
\end{align*}
Since Lemma~\ref{convpre} and \ref{H4} we have $\vert \Omega\ssetminus\Omega^\prime\vert=0$.

Fix $\xi,\zeta\in \dom \widehat{\Z L}(x_0,\cdot)$. Thus $L(x_0,t\xi)\sinf\infty$ and $L(x_0,t\zeta)\sinf\infty$ for all $t\in]0,1[$. Fix $t\in ]0,1[$. If $\chi\in \{\xi,\zeta\}$ then
\begin{align*}
\infty\ssup L(x_0,t\chi)=\lim_{\eps\to 0}\fint_{\Q_\eps(x_0)}L(y,t\chi)dy=\lim_{\eps\to 0}\frac{F(tl_\chi;\Q_\eps(x_0))}{\eps^d}
\end{align*} 
where $l_\chi(y):=\chi y$ for all $y\in\RR^d$.
Choose $\delta_0^t\ssup 0$ such that ${F(tl_\chi;\Q_{\delta_0^t}(x_0))}\sinf\infty$. By the local version of Theorem~\ref{main result-ruusc1} (with $O=\Q_\delta(x_0)$) we have for every $\eps\in ]0,\delta_0^t[$
\begin{align*}
{\F(tl_\chi;\Q_\eps(x_0))}=\int_{\Q_\eps(x_0)}\widehat{\Z L}(y,t\chi)dy.
\end{align*}
Reasoning as in the proof of the zig-zag lemma (see for instance \cite[p. 79-80]{braides98}), we obtain for every $\tau\in ]0,1[$
\[
\widehat{\Z L}(x_0,\tau t\xi+(1-\tau)t\zeta)\le \tau \widehat{\Z L}(x_0,t\xi)+(1-\tau)\widehat{\Z L}(x_0,t\zeta).
\]
Letting $t\to 1$ and using Lemma~\ref{ruuscLimit} we obtain that $\widehat{\Z L}(x_0,\cdot)$ is rank-one convex. Then $\widehat{\Z L}(x_0,\cdot)$ is separately convex so is continuous in $\inte(\dom \widehat{\Z L}(x_0,\cdot))$ (see \cite[Theorem 2.31, p. 47]{dacorogna08}). Applying Lemma~\ref{ZLruusc} together with Theorem~\ref{Extension-Result-for-ru-usc-Functions} we obtain that the function $\widehat{\Z L}(x_0,\cdot)$ is the lsc envelope of ${\Z L}(x_0,\cdot)$.
\end{proof}
The integral representation of $\F$ was studied in \cite{dearcangelis-monsurro-zappale04,dearcangelis-zappale05} in the scalar case with $p=\infty$, we propose here the following alternative result.
\begin{theorem}\label{convexification} Assume that $m=1$. Assume that $\W$ satisfies \ref{H0} {\rm(\ref{H0prime} if $p=\infty$)}, \ref{H4}, \ref{H3}, \ref{regintegrand}, \ref{H2}, and \ref{essentielim}. If either $\W$ is ru-usc or $\Z\W$ is both ru-usc and $W^{1,p}$-quasiconvex then for every $u\in\dom \F$ we have
\begin{align*}
\F(u)=\int_\Omega \W^{\ast\ast}(x,\nabla u(x))dx.
\end{align*}
Moreover $\W^{\ast\ast}(x,\cdot)=\overline{\Z \W}(x,\cdot)=\widehat{\Z \W}(x,\cdot)$ a.e. in $\Omega$.
\end{theorem}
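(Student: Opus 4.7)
The plan is to combine the integral representation of Theorem~\ref{main result-ruusc2} with the scalar-case collapse of rank-one convexity onto plain convexity, and to sandwich the relaxation between $\int_\Omega\widehat{\Z\W}(x,\nabla u)dx$ and $\int_\Omega\W^{\ast\ast}(x,\nabla u)dx$ via the classical weak lower semicontinuity of convex integral functionals in the scalar case. Since all hypotheses of Theorem~\ref{main result-ruusc2} are in force, that theorem already yields $\F(u)=\int_\Omega\widehat{\Z\W}(x,\nabla u)dx$ on $\dom\F$, and only the identification $\widehat{\Z\W}=\W^{\ast\ast}$ a.e. remains.

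The first key step is to observe that $\widehat{\Z\W}(x,\cdot)$ is convex and lower semicontinuous for a.e. $x$. Lemma~\ref{convexityZL} applied with $L=\W$ gives, for a.e. $x$, that $\widehat{\Z\W}(x,\cdot)=\overline{\Z\W}(x,\cdot)$ and is rank-one convex; since $m=1$ every $1\times d$ matrix has rank at most one, rank-one convexity collapses to ordinary convexity. Taking $\varphi\equiv 0$ in the definition of $\Z\W$ together with \ref{H4} yields $\Z\W(x,\xi)\le\W(x,\xi)$ a.e., and because $\Z\W$ is ru-usc (directly in the second alternative of the theorem and via Lemma~\ref{ZLruusc} in the first), Lemma~\ref{ruuscLimit} turns the $\liminf$ in the definition of $\widehat{\Z\W}$ into a genuine limit and the ru-usc inequality at $t\to 1^-$ gives $\widehat{\Z\W}(x,\xi)\le\Z\W(x,\xi)\le\W(x,\xi)$. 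As $\W^{\ast\ast}(x,\cdot)$ is by definition the largest convex lsc minorant of $\W(x,\cdot)$, this forces $\widehat{\Z\W}(x,\cdot)\le\W^{\ast\ast}(x,\cdot)$ a.e.

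For the reverse inequality I would use that, in the scalar setting, $\W^{\ast\ast}$ is a convex normal integrand dominated by $\W$, so $u\mapsto\int_\Omega\W^{\ast\ast}(x,\nabla u)dx$ is sequentially weakly lsc on $W^{1,p}(\Omega;\RR^m)$ and dominated by $F$, hence also by $\F$. Chaining
\[
\int_\Omega\widehat{\Z\W}(x,\nabla u)dx=\F(u)\ge\int_\Omega\W^{\ast\ast}(x,\nabla u)dx\ge\int_\Omega\widehat{\Z\W}(x,\nabla u)dx
\]
proves $\F(u)=\int_\Omega\W^{\ast\ast}(x,\nabla u(x))dx$ for every $u\in\dom\F$. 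Applying the same chain to the local version of Theorem~\ref{main result-ruusc2} gives $\int_O\widehat{\Z\W}(y,\nabla u)dy=\int_O\W^{\ast\ast}(y,\nabla u)dy$ on every $O\in\O(\Omega)$; testing against the affine maps $u_\xi(y)=\xi y$ with $\xi\in\overline{Q}_{\rho_0}(0)\subset\Lambda_\W$ (permissible by \ref{H1}) on cubes $\Q_\eps(x_0)$ and letting $\eps\to 0$ at a Lebesgue point produces $\widehat{\Z\W}(x_0,\xi)=\W^{\ast\ast}(x_0,\xi)$ for all such $\xi$ and a.e. $x_0$.

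The delicate part will be to upgrade this equality on a neighbourhood of the origin to equality on all of $\MM^{m\times d}$ for a.e. $x_0$. I expect to proceed by choosing a countable dense set in $\inte\Lambda_\W$, obtaining a single conull set on which $\widehat{\Z\W}(x_0,\cdot)$ and $\W^{\ast\ast}(x_0,\cdot)$ agree on a dense subset of $\inte\Lambda_\W$, and then extending by convexity together with the continuity of finite convex functions on the interior of their effective domain. To check that this density-based extension really covers all of $\MM^{m\times d}$, I would use Lemma~\ref{convpre} together with \ref{essentielim} to verify that $\dom\widehat{\Z\W}(x,\cdot)$ and $\dom\W^{\ast\ast}(x,\cdot)$ coincide up to a null set of $x$, so that both integrands equal $+\infty$ outside the common effective domain.
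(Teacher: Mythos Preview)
Your proposal is correct and follows the same route as the paper: Theorem~\ref{main result-ruusc2} for the integral representation with $\widehat{\Z\W}$, Lemma~\ref{convexityZL} together with $m=1$ to see that $\widehat{\Z\W}(x,\cdot)=\overline{\Z\W}(x,\cdot)$ is convex and lsc and hence $\le\W^{\ast\ast}(x,\cdot)$ by the very definition of the biconjugate, and the classical sequential weak lower semicontinuity of convex integrals to get $\F(u;O)\ge\int_O\W^{\ast\ast}$. The only difference is that the paper passes from this last functional inequality to the pointwise bound $\widehat{\Z\W}(x,\cdot)\ge\W^{\ast\ast}(x,\cdot)$ with a one-line ``it follows'', whereas your final two paragraphs spell out an explicit Lebesgue-point and density scheme to reach the same conclusion; that scheme is sound (and a bit heavier than needed, since by \ref{H4} the affine map $l_\xi$ already lies in $\dom F(\cdot;\Q_\eps(x_0))$ for small $\eps$ whenever $\xi\in\dom\W(x_0,\cdot)$, and Lemma~\ref{convpre} gives $\inte(\dom\widehat{\Z\W}(x_0,\cdot))\subset\dom\W(x_0,\cdot)$, so the density step is immediate once you combine continuity of finite convex functions on interiors with lower semicontinuity at the boundary), but it is not a genuinely different argument.
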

\begin{proof} By Theorem~\ref{main result-ruusc2}, the representation \eqref{intregral-representation} holds for all $u\in\dom \F$. Fix $\xi\in\MM^{1\times d}$. On one hand, by a well known lower semicontinuity result (see for instance \cite[Theorem 4.1.1]{buttazzo89}) we have for every $O\in\O(\Omega)$ and $u\in W^{1,p}(\Omega)$
\begin{align*}
\F(u;O)\ge& \inf\left\{\liminf_{n\to\infty}\int_O \W^{\ast\ast}(x,\nabla u_n(x))dx:W^{1,p}(\Omega)\ni u_n\wto u\right\}\\\ge& \int_O \W^{\ast\ast}(x,\nabla u(x))dx.
\end{align*} 
It follows that $\widehat{\Z\W}(x,\cdot)\ge\W^{\ast\ast}(x,\cdot)\mbox{ a.e. in }\Omega$. On the other hand by Lemma~\ref{convexityZL} we have $\overline{\Z\W}(x,\cdot)$ is convex and lsc and $\W(x,\cdot)\ge\overline{\W}(x,\cdot)\ge\overline{\Z\W}(x,\cdot)=\widehat{\Z \W}(x,\cdot)\ge \W^{\ast\ast}(x,\cdot)$ a.e. $\Omega$, and the proof is complete.
\end{proof}
\begin{remark} If we consider the case $p=\infty$ in Theorem~\ref{convexification}, we can replace the assumption \ref{H2} by \ref{H1} since \ref{H1} and \ref{H3} imply \ref{H2} (see Corollary~\ref{lipschitz-scalar}).
\end{remark}
\subsection{Relaxation with $\p(x)$-growth}
Let $p\in]d,\infty[$. Let $\p:\Omega\to [0,\infty[$ be a measurable function such that $p\le \p(x)$ for all $x\in\Omega$.

Let $\W:\Omega\times\MM^{m\times d}\to [0,\infty[$ be a Borel measurable integrand. 

Consider the assertions: 
\begin{hypgr}
\item\label{Lambda-sufficiency} for each $\xi \in\MM^{m\times d}$ we have
\[
\vert\xi\vert^{\p(\cdot)}\in L^1(\Omega)\quad\mbox{ and }\quad\limsup_{\delta\to 0}\sup_{\substack{\Q\subset\Omega,\;\tiny{\rm cube}\\\diam(\Q)<\delta}}\fint_\Q \vert \xi\vert^{\p(x)}dx\sinf\infty;
\]
\item\label{pxgrowth} there exist $\alpha,\beta\ssup 0$ such that for every $(x,\xi)\in\Omega\times\MM^{m\times d}$ we have
\[
\alpha\vert \xi\vert^{\p(x)}\le \W(x,\xi)\le\beta(1+\vert\xi\vert^{\p(x)}).
\]
\end{hypgr}
When \ref{pxgrowth} holds, we say that $\W$ has $\p(x)$-growth. The condition \ref{Lambda-sufficiency} is satisfied if $\p(\cdot) \le p^\ast$ for some $p^\ast\in ]d,\infty[$.

\begin{theorem}  Assume that \ref{pxgrowth} holds. If \eqref{intregral-representation} holds for each $u\in\dom \F$, then $\widehat{\Z\W}$ is a Carath\'eodory integrand which is ru-usc and rank-one convex with respect to the second variable.
\end{theorem}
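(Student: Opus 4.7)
The plan is to establish the three properties in turn: rank-one convexity of $\widehat{\Z\W}(x,\cdot)$, the Carath\'eodory property, and ru-usc, since each step rests on the previous one(s).

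For \emph{rank-one convexity}, I fix $x_0\in\Omega$ a common Lebesgue point of $y\mapsto\widehat{\Z\W}(y,\chi)$ for $\chi$ in a countable dense subset of $\MM^{m\times d}$. Pick $\xi,\zeta\in\MM^{m\times d}$ with $\rk(\xi-\zeta)\le 1$ and $\tau\in]0,1[$, and set $\eta:=\tau\xi+(1-\tau)\zeta$. Under \ref{pxgrowth}, for each $\chi$ and sufficiently small $\eps$ one has
\[
\int_{\Q_\eps(x_0)}\W(y,\chi)dy\le\beta\int_{\Q_\eps(x_0)}(1+|\chi|^{\p(y)})dy<\infty,
\]
so $l_\chi(y):=\chi y$ lies in $\dom\F(\cdot;\Q_\eps(x_0))$. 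The classical zig-zag construction (cf.~\cite[p.~79--80]{braides98}) then yields a sequence $u_n\wto l_\eta$ in $W^{1,p}(\Q_\eps(x_0);\RR^m)$ whose gradients take only the values $\xi$ and $\zeta$ on subsets of $\Q_\eps(x_0)$ of relative measure converging to $\tau$ and $1-\tau$ respectively. Combining the weak lsc of $\F(\cdot;\Q_\eps(x_0))$, the integral representation, and the upper $\p(x)$-growth bound (to conclude by dominated convergence on the zig-zag piece), I would obtain
\[
\int_{\Q_\eps(x_0)}\widehat{\Z\W}(y,\eta)dy\le\tau\int_{\Q_\eps(x_0)}\widehat{\Z\W}(y,\xi)dy+(1-\tau)\int_{\Q_\eps(x_0)}\widehat{\Z\W}(y,\zeta)dy;
\]
dividing by $\eps^d$ and letting $\eps\to 0$ via Lebesgue's differentiation theorem yields the rank-one convexity inequality at $x_0$.

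For the \emph{Carath\'eodory property}, measurability in $x$ is immediate from $\widehat{\Z\W}(x,\xi)=\liminf_{t\to 1^-}\Z\W(x,t\xi)$ and the Borel measurability of $\Z\W$, while continuity in $\xi$ follows because $\widehat{\Z\W}(x,\cdot)$ is finite on $\MM^{m\times d}$ under \ref{pxgrowth}, so rank-one convexity entails separate convexity, which implies continuity by \cite[Theorem~2.31]{dacorogna08}.

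For \emph{ru-usc}, I first establish the two-sided bound $\alpha|\xi|^{\p(x)}\le\widehat{\Z\W}(x,\xi)\le\beta(1+|\xi|^{\p(x)})$ a.e.: the upper bound follows from $\F\le F$ applied to affine functions plus Lebesgue differentiation, while the lower bound uses $F(u)\ge\alpha\int|\nabla u|^{\p(y)}dy$ together with the weak lsc of the right-hand functional on $W^{1,p}$ (its integrand $(y,\xi)\mapsto|\xi|^{\p(y)}$ being Carath\'eodory and convex), which passes to $\F$ and gives the pointwise bound after differentiation on affine functions. Combining rank-one convexity with the upper bound, the standard Lipschitz estimate for separately convex functions of polynomial growth \cite[Proposition~2.32]{dacorogna08} yields $|\widehat{\Z\W}(x,t\xi)-\widehat{\Z\W}(x,\xi)|\le c(x)(1+|\xi|^{\p(x)})(1-t)$ with $c(x)$ depending on $\beta,m,d,\p(x)$, and choosing $a(x):=\max\{1,c(x)/\alpha\}$ together with the lower bound gives $\Delta^a_{\widehat{\Z\W}}(t)\le 1-t$, whence ru-usc. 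The main obstacle will be the ru-usc step: the Lipschitz constant depends on the variable exponent $\p(x)$, so producing a single weight $a\in L^1_{\rm loc}(\Omega)$ requires some regularity of $\p$ (e.g.~local boundedness), and a secondary point is the local integrability of $y\mapsto|\chi|^{\p(y)}$ needed in the rank-one convexity step, which is handled by working at Lebesgue points of $\p$.
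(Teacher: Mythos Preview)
Your approach is essentially the paper's: zig-zag for rank-one convexity, separate convexity plus the finite $\p(x)$-growth (via \cite[Theorem~2.31]{dacorogna08}) for the Carath\'eodory property, and the Lipschitz estimate for separately convex functions of polynomial growth \cite[Prop.~2.32]{dacorogna08} combined with the two-sided bound $\alpha|\xi|^{\p(x)}\le\widehat{\Z\W}(x,\xi)\le\beta(1+|\xi|^{\p(x)})$ for ru-usc. The paper simply takes the Lipschitz constant $K$ from Prop.~2.32 as uniform in $x$ and uses the constant weight $a\equiv 1$, obtaining $\Delta^1_{\widehat{\Z\W}}(t)\le (1-t)4K(1+\frac1\alpha)$. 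Your concern is well placed: the constant in Prop.~2.32 depends on the exponent, hence on $\p(x)$, and without an upper bound $p^\ast$ on $\p(\cdot)$ there is no obvious way to make $K$ uniform or to guarantee $c(\cdot)\in L^1_{\rm loc}$. The paper does not address this; in effect both arguments need $\p(\cdot)$ locally bounded, which the paper mentions only as a sufficient condition for \ref{Lambda-sufficiency} but does not assume in this theorem.

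One small circularity in your rank-one convexity step: passing from the averaged inequality on $\Q_\eps(x_0)$ to the pointwise inequality at $x_0$ for \emph{arbitrary} $\xi,\zeta,\eta$ via Lebesgue differentiation requires $x_0$ to be a Lebesgue point of $y\mapsto\widehat{\Z\W}(y,\chi)$ for those specific matrices, not just for a countable dense set; extending by continuity of $\widehat{\Z\W}(x_0,\cdot)$ is circular since you derive continuity from rank-one convexity. The paper just asserts rank-one convexity ``for all $x\in\Omega$'' without spelling this out. A clean fix is to use the upper $\p(x)$-bound to dominate the averages uniformly on bounded $\xi$-sets and pass to the limit jointly in $\eps$ and the countable approximation.
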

\begin{proof} Reasoning as in the proof of the zig-zag lemma (see for instance \cite[p. 79-80]{braides98}), we obtain that $\widehat{\Z\W}(x,\cdot)$ is rank-one convex for all $x\in\Omega$. Then $\widehat{\Z\W}(x,\cdot)$ is separately convex. Moreover using \ref{Lambda-sufficiency}, it is easy to see that $\widehat{\Z \W}$ satisfies: for a.a. $x\in\Omega$ and for every $\xi\in\MM^{m\times d}$ it holds
\begin{equation}\label{grzf}
\alpha\vert \xi\vert^{\p(x)}\le \widehat{\Z \W}(x,\xi)\le\beta(1+\vert\xi\vert^{\p(x)}).
\end{equation}
So by using \cite[Theorem 2.31, p. 47]{dacorogna08} we obtain that for a.a. $x\in\Omega$ the function $\widehat{\Z\W}(x,\cdot)$ is continuous in $\inte(\dom \widehat{\Z L}(x,\cdot))$. But for a.a. $x\in\Omega$ we have $\inte(\dom \widehat{\Z\W}(x,\cdot))=\dom \widehat{\Z\W}(x,\cdot)=\MM^{m\times d}$ since \eqref{grzf}.

By \eqref{grzf} and \cite[Prop. 2.32, p. 51] {dacorogna08} there exists $K\ssup 0$ such that for a.a. $x\in\Omega$, for every $t\in ]0,1[$ and every $\xi\in\MM^{m\times d}$
\begin{align*}
\vert\widehat{\Z\W}(x,t\xi)-\widehat{\Z\W}(x,\xi)\vert&\le K\vert t\xi-\xi\vert \left(1+\vert t\xi\vert^{\p(x)-1}+\vert \xi\vert^{\p(x)-1}\right)\\
&\le (1-t)4K\left(1+\vert \xi\vert^{\p(x)}\right)\\
&\le (1-t)4K\left(1+\mbox{$\frac{1}{\alpha}$}\widehat{\Z\W}(x,\xi)\right)\\
&\le (1-t)4K(1+\mbox{$\frac{1}{\alpha}$})\left(1+ \widehat{\Z\W}(x,\xi)\right)
\end{align*}
where we used \ref{pxgrowth}. We obtain $\Delta_{\widehat{\Z\W}}^1(t)\le (1-t)4K(1+\mbox{$\frac{1}{\alpha}$})$ which shows that $\widehat{\Z\W}$ is ru-usc by letting $t\to 1$.
\end{proof}
\begin{theorem}\label{pxtheorem} Assume that \ref{Lambda-sufficiency} and \ref{pxgrowth} hold. If ${\Z\W}$ is $W^{1,p}$-quasiconvex and ru-usc, then \eqref{intregral-representation} holds for each $u\in\dom \F$.
\end{theorem}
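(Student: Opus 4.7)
The plan is to reduce to Theorem~\ref{main result-ruusc1}: I will check that $\W$ satisfies its hypotheses \ref{H0}, \ref{H14}, \ref{H3}, which together with the assumption that $\Z\W$ is $W^{1,p}$-quasiconvex and ru-usc will yield \eqref{intregral-representation} on $\dom F$. I will then argue that $\dom F=\dom\F$, which transfers the identity to the whole of $\dom\F$.

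To verify the hypotheses, note first that \ref{H0} follows from the lower bound in \ref{pxgrowth} and the inequality $\p(x)\ge p$, up to an additive constant which is harmless throughout the proof of Theorem~\ref{main result-ruusc1}. Next, by \ref{pxgrowth} and \ref{Lambda-sufficiency} we have $\int_\Omega\W(x,\xi)\,dx<\infty$ for every $\xi$, so $\Lambda_\W=\MM^{m\times d}$ and \ref{H1} is trivially satisfied. Testing the definition of $\omega^\W_\delta$ with $\varphi\equiv 0$ and using the upper bound gives $\omega^\W_\delta(\xi)\le \beta+\beta\sup_{\Q\subset\Omega,\,\diam\Q<\delta}\fint_\Q|\xi|^{\p(x)}\,dx$; taking $\limsup_{\delta\to 0}$ and invoking the local sup control in \ref{Lambda-sufficiency} yields $\Xi_\W=\MM^{m\times d}$, hence \ref{regintegrand}. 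For \ref{H4}, apply Lebesgue's differentiation theorem to $\W(\cdot,\xi)\in L^1(\Omega)$ for each $\xi$ in a countable dense subset of $\MM^{m\times d}$ and extend to all $\xi$ for a.e.\ $x$ by a continuity-in-$\xi$ argument built on the two-sided polynomial bound of \ref{pxgrowth}. Finally, \ref{H3} follows from the inequality $|t\xi+(1-t)\zeta|^{\p(x)}\le 2^{\p(x)-1}(|\xi|^{\p(x)}+|\zeta|^{\p(x)})$ and the two-sided growth, the factor $2^{\p(x)-1}$ being uniformly controlled because \ref{Lambda-sufficiency} forces $\esssup_\Omega\p<\infty$ (apply it to any $\xi$ with $|\xi|>1$).

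To obtain $\dom F=\dom\F$, pick $u\in\dom\F$ and a sequence $u_n\wto u$ with $\sup_n F(u_n)<\infty$. The lower bound in \ref{pxgrowth} yields $\sup_n\int_\Omega|\nabla u_n|^{\p(x)}\,dx<\infty$; since $\xi\mapsto|\xi|^{\p(x)}$ is convex, the functional $v\mapsto\int_\Omega|\nabla v|^{\p(x)}\,dx$ is sequentially weakly lower semicontinuous on $W^{1,p}(\Omega;\RR^m)$, so $\int_\Omega|\nabla u|^{\p(x)}\,dx<\infty$ and the upper bound then gives $u\in\dom F$.

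The main technical obstacle is the verification of \ref{H4}: since $\W$ is only Borel measurable, passing from Lebesgue-point sets depending on $\xi$ to a common set valid for every $\xi$ is delicate. A safe workaround is to replace $\W(x,\cdot)$ by its lower semicontinuous envelope --- which leaves $\Z\W$ unchanged --- and then use the two-sided polynomial bound and the separability of $\MM^{m\times d}$ to extend by countable density.
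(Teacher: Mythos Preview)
Your strategy coincides with the paper's: verify \ref{H0}, \ref{H3}, \ref{H14} for $\W$, invoke Theorem~\ref{main result-ruusc1} on $\dom F$, and then argue $\dom F=\dom\F$ from the sequential weak lower semicontinuity of $v\mapsto\int_\Omega|\nabla v|^{\p(x)}\,dx$. The paper's own proof is a terse sketch of exactly this reduction; you have been more explicit in the verifications, and your observation that \ref{Lambda-sufficiency} applied to any $|\xi|>1$ forces $\p\in L^\infty(\Omega)$ (Lebesgue differentiation on the uniform small-cube averages of $|\xi|^{\p(\cdot)}$ gives $|\xi|^{\p(\cdot)}\in L^\infty$) is correct and lets you check \ref{H3} directly, whereas the paper passes through Lemma~\ref{L growth} and the convexity of $\xi\mapsto|\xi|^{\p(x)}$.

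The one place where your argument does not close is the workaround for \ref{H4}. You correctly identify that, for a merely Borel integrand, upgrading Lebesgue points from $\xi$-dependent null sets to a single universal null set is the real issue; the paper itself simply asserts \ref{H14} holds without addressing this either. However, your proposed fix is not justified as stated: replacing $\W(x,\cdot)$ by its lower semicontinuous envelope may alter both $F$ and $\Z\W$ (the infimum over $\varphi\in W^{1,p}_0$ in the definition of $\Z\W$ does not in general reproduce the pointwise lsc envelope in $\xi$ under the integral), so even if \ref{H4} held for the envelope, transferring the conclusion of Theorem~\ref{main result-ruusc1} back to the original $\W$ would be a separate problem requiring its own argument. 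The cleanest repair is to assume $\W$ Carath\'eodory, so that the two-sided polynomial bound yields local uniform continuity in $\xi$ on compacta and the countable-density extension goes through; alternatively, one simply carries \ref{H4} as a standing hypothesis, which is effectively what the paper does in its applications.
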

\begin{proof} Since \ref{pxgrowth} and \ref{Lambda-sufficiency} we have $\dom \W(x,\cdot)=\MM^{m\times d}$ for all $x\in\Omega$ and \ref{H14} holds. Moreover, \ref{H0} holds since $\p(\cdot)\ge p\ssup d$. It is easy to check that \ref{H3} holds since Lemma~\ref{L growth} and the fact that for each $x$ the function $\xi\mapsto \vert\xi\vert^{\p(x)}$ is convex. Apply Theorem~\ref{main result-ruusc1} we have \eqref{intregral-representation} for all $u\in\dom F$. Using convexity it is easy to see that the functional $W^{1,p}(\Omega;\RR^m)\ni u\mapsto \int_\Omega \vert \nabla u(x)\vert^{\p(x)}dx$ is swlsc on $W^{1,p}(\Omega;\RR^m)$. Hence 
\begin{align*}
\dom \F=\dom F=\left\{u\in W^{1,p}(\Omega;\RR^m):\int_\Omega\vert \nabla u(x)\vert^{\p(x)}dx\sinf\infty\right\},
\end{align*}
and the proof is complete. 
\end{proof}
\section{Preliminary results}\label{Preliminary results}

\subsection{Some properties of $L$ and $\Z L$ }
The following lemma is an extension for nonconvex functions satisfying \ref{H1} and \ref{H3} of the classical local upper bound property for convex functions. 
\begin{lemma}\label{lemma finite integrand} Let $L:\Omega\times\MM^{m\times d}\to [0,\infty]$ be a Borel measurable integrand. If $L$ satisfies \ref{H1} and \ref{H3} then 
\begin{align*}
\int_\Omega \maxw(x) dx\sinf\infty\quad\mbox{ where }\quad\maxw(\cdot):=\sup_{\zeta\in\overline{\Q}_{\rho_0}(0)}L(\cdot,\zeta).
\end{align*}
\end{lemma}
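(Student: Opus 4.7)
The idea is to reduce pointwise control of $L(x,\zeta)$ for $\zeta$ ranging over the whole cube $\overline{Q}_{\rho_0}(0)$ to control at the finitely many vertices of this cube, and then integrate using \ref{H1}.

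First I would enumerate the vertices $v_1,\dots,v_N$ of $\overline{Q}_{\rho_0}(0)\subset\MM^{m\times d}$, where $N=2^{md}$. Since this cube is convex with $N$ extreme points, any $\zeta\in\overline{Q}_{\rho_0}(0)$ admits a representation $\zeta=\sum_{i=1}^{N}t_i v_i$ with $t_i\ge 0$ and $\sum t_i=1$. The key step is to prove, by induction on $N$, the existence of a constant $C_N>0$ (depending only on $C$ from \ref{H3} and on $N$) such that for every $x\in\Omega$ and every such convex combination
\[
L\bigl(x,\textstyle\sum_{i=1}^{N}t_i v_i\bigr)\le C_N\Bigl(1+\sum_{i=1}^{N}L(x,v_i)\Bigr).
\]
The case $N=1$ is trivial, while the case $N=2$ is exactly \ref{H3}. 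For the inductive step, if $t_1=1$ the bound is immediate; otherwise write
\[
\textstyle\sum_{i=1}^{N}t_i v_i=t_1 v_1+(1-t_1)\zeta',\qquad \zeta':=\sum_{i=2}^{N}\frac{t_i}{1-t_1}v_i,
\]
apply \ref{H3} once to split off $v_1$, and apply the inductive hypothesis on the remaining $N-1$ vertices to bound $L(x,\zeta')$. A small amount of bookkeeping of the constants closes the induction.

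Taking the supremum over $\zeta\in\overline{Q}_{\rho_0}(0)$ on the left-hand side, the right-hand side is independent of $\zeta$, hence
\[
\maxw(x)\le C_N\Bigl(1+\sum_{i=1}^{N}L(x,v_i)\Bigr)\quad\text{for every }x\in\Omega.
\]
Each vertex $v_i$ lies in $\overline{Q}_{\rho_0}(0)\subset\Lambda_L$ by \ref{H1}, so $\int_\Omega L(x,v_i)\,dx<\infty$ for every $i$, and integrating the displayed inequality over $\Omega$ (which has finite Lebesgue measure since it is bounded) yields $\int_\Omega \maxw(x)\,dx<\infty$.

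I do not expect any serious obstacle: the only delicate point is the inductive tracking of the constant, which is routine, and a brief comment may be needed to note that the pointwise domination by a finite sum of Borel functions ensures that $\maxw$ is majorized by an integrable function, so the integrability conclusion makes sense regardless of any subtlety in the measurability of an uncountable supremum.
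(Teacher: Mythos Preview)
Your argument is correct and follows essentially the same strategy as the paper: iterate \ref{H3} to dominate $L(x,\zeta)$ uniformly in $\zeta\in\overline{Q}_{\rho_0}(0)$ by a finite sum $C'(1+\sum_i L(x,w_i))$ with $w_i$ ranging over a fixed finite subset of $\overline{Q}_{\rho_0}(0)$, then integrate using \ref{H1}. The only organizational difference is that you take the $2^{md}$ vertices and run an induction on the length of the convex combination, whereas the paper works coordinate-by-coordinate, writing each entry $\xi_{ij}$ as a convex combination of $\mathrm{sgn}(\xi_{ij})\rho_0$ and $0$ and thus using the $3^{md}$ points with entries in $\{-\rho_0,0,\rho_0\}$. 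Your decomposition is arguably tidier (fewer reference points, a single clean induction) but the substance is identical.
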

\begin{proof} Each matrix $\xi\in\overline{\Q}_{\rho_0}(0)$ is identified to the vector
\begin{align*}
\xi=\left(\xi_{11},\cdots,\xi_{1d},\cdots,\xi_{i1},\cdots,\xi_{id},\cdots,\xi_{m1},\cdots,\xi_{md}\right).
\end{align*}
Consider the finite subset $\S:=\left\{(\xi_{11},\cdots,\xi_{md}): \xi_{ij}\in\{-\rho_0,0,\rho_0\}\right\}\subset \overline{\Q}_{\rho_0}(0)$ and define the function $L^\ast:\Omega\to [0,\infty]$ by
$
L^\ast(x):=\max_{\xi\in\S}L(x,\xi).
$
The function $L^\ast$ belongs to $L^1(\Omega)$ since \ref{H1}. Indeed, for each $x\in\Omega$ choose one $\xi_x\in \S$ such that  $L^\ast(x)=L(x,\xi_x)$, and for each $\xi\in\S$ consider the sets $M_\xi:=\{y\in\Omega:\xi_y=\xi\}$, then the finite family $\{M_\xi\}_{\xi\in\S}$ is pairwise disjoint, $\Omega=\mathop{\cup}_{\xi\in\S}M_\xi$ and 
\begin{align*}
\int_{\Omega} L^\ast(x)dx=\sum_{\xi\in\S}\int_{M_\xi}L(x,\xi_x)dx\le \sum_{\xi\in\S}\int_{\Omega}L(x,\xi)dx\sinf\infty.
\end{align*}

Fix $x\in\Omega$. Let 
$
\zeta=\left(\zeta_{11},\cdots,\zeta_{1d},\cdots,\zeta_{i1},\cdots,\zeta_{id},\cdots,\zeta_{m1},\cdots,\zeta_{md}\right)\in\S
$
with $\xi_{ij}=\zeta_{ij}$ for all $i\not=1$ and $j\not=1$. If $\xi_{11}\not=0$ then by \ref{H3} we have  
\begin{align}\label{Eq1: lemma finite integrand}
L(x,\xi)=&L\left(x,\mbox{$\frac{\vert\xi_{11}\vert}{\rho_0}$}\sgn(\xi_{11})\rho_0+\left(1-\mbox{$\frac{\vert\xi_{11}\vert}{\rho_0}$}\right)0,\cdots,\xi_{1d},\cdots,\xi_{m1},\cdots,\xi_{md}\right)\\
\le& C\left(1+L(x,\rho_0,\cdots,\xi_{md})+L(x,0,\cdots,\xi_{md})\right)\notag\\
\le& 2C\left(1+L^\ast(x)\right)\notag
\end{align}
where $\sgn(\xi_{ij})$ denotes the sign of $\xi_{ij}$. The same upper bound in \eqref{Eq1: lemma finite integrand} holds for $L(x,\xi)$ when $\xi_{11}=0$. 

Assume now that $\xi_{ij}=\zeta_{ij}$ for all $i\not=1$ and $j\notin\{1,2\}$. Then by using \eqref{Eq1: lemma finite integrand} and \ref{H3}, we have
\begin{align*}
L(x,\xi)=&L\left(x,\xi_{11},\mbox{$\frac{\vert\xi_{12}\vert}{\rho_0}$}\sgn(\xi_{12})\rho_0+\left(1-\mbox{$\frac{\vert\xi_{12}\vert}{\rho_0}$}\right)0,\cdots,\xi_{1d},\cdots,\xi_{m1},\cdots,\xi_{md}\right)\\
\le& C\left(1+2C(1+L^\ast(x))+L^\ast(x)\right).\notag\\
\le& C(1+2C)\left(1+L^\ast(x)\right).\notag
\end{align*}
Recursively, we obtain $C^\ast\ssup0$ which depends on $C$ only, such that 
\[
L(x,\xi)\le C^\ast(1+L^\ast(x))
\] 
for all $(x,\xi)\in\Omega\times\overline{\Q}_{\rho_0}(0)$. Integrating over $\Omega$ we obtain
\begin{align*}
\int_\Omega\sup_{\zeta\in\overline{\Q}_{\rho_0}(0)}L(x,\zeta)dx\le C^\ast\left(\vert\Omega\vert+\int_\Omega L^\ast(x)dx\right)\sinf\infty.
\end{align*}
\end{proof}
\begin{corollary}\label{lipschitz-scalar}If $p=\infty$ then \ref{H1} and \ref{H3} imply \ref{H2}, i.e., there exists $\rho_0\ssup 0$ such that $\int_\Omega L(x,\nabla u(x))dx\sinf\infty$ whenever $\vert u\vert_{1,\infty}\le \rho_0$ for all $u\in W^{1,\infty}(\Omega;\RR^m)$.
\end{corollary}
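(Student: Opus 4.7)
The plan is to reduce the corollary directly to Lemma~\ref{lemma finite integrand}, exploiting the specific feature of the case $p=\infty$ that any $u\in W^{1,\infty}(\Omega;\RR^m)$ with $|u|_{1,\infty}\le\rho$ has its gradient confined to the closed cube of ``radius'' $\rho$ in $\MM^{m\times d}$ for almost every $x\in\Omega$. This is exactly the situation in which the supremum function $\maxw(\cdot)=\sup_{\zeta\in\overline{\Q}_{\rho_0}(0)}L(\cdot,\zeta)$ of the lemma becomes a pointwise majorant of $L(\cdot,\nabla u(\cdot))$.

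Concretely, I would argue as follows. First, apply Lemma~\ref{lemma finite integrand}: by \ref{H1} and \ref{H3} there exists $\rho_0\ssup 0$ such that $\maxw\in L^1(\Omega)$. In view of Remark~(iv), this $\rho_0$ can be chosen so that $\overline{\Q}_{\rho_0}(0)$ is a neighborhood of $0$ in $\MM^{m\times d}$; in particular, after shrinking $\rho_0$ if necessary, every matrix $\xi\in\MM^{m\times d}$ with $|\xi|\le\rho_0$ belongs to $\overline{\Q}_{\rho_0}(0)$. Next, for any $u\in W^{1,\infty}(\Omega;\RR^m)$ with $|u|_{1,\infty}\le\rho_0$, one has $|\nabla u(x)|\le\rho_0$ for almost every $x\in\Omega$, and therefore $\nabla u(x)\in\overline{\Q}_{\rho_0}(0)$ a.e. Consequently
\begin{align*}
\int_\Omega L(x,\nabla u(x))\,dx\le\int_\Omega\maxw(x)\,dx\sinf\infty,
\end{align*}
which is precisely \ref{H2}.

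There is no genuine obstacle here beyond a small bookkeeping check: one must ensure that the $\rho_0$ provided by Lemma~\ref{lemma finite integrand} (appearing inside the sup defining $\maxw$) and the $\rho_0$ in \ref{H2} (controlling $|u|_{1,\infty}$) can be taken to be the same constant, which is simply a matter of passing to the smaller of the two if needed. The role of the hypothesis $p=\infty$ is essential and cannot be relaxed by this argument, since for finite $p$ a bound on $|u|_{1,p}$ does not confine $\nabla u(x)$ to any fixed compact set of matrices for a.e. $x$, so the pointwise majorization by $\maxw$ collapses.
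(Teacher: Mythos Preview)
Your proposal is correct and is essentially identical to the paper's own proof: the paper simply remarks that $\nabla u(\cdot)\in\overline{\Q}_{\rho_0}(0)$ a.e.\ in $\Omega$ when $|u|_{1,\infty}\le\rho_0$ and then invokes Lemma~\ref{lemma finite integrand}. Your additional bookkeeping about matching the norm-ball with the cube is fine but not needed in the paper's convention.
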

\begin{proof} Remark that $\nabla u(\cdot)\in \overline{\Q}_{\rho_0}(0)$ a.e. in $\Omega$ when $\vert u\vert_{1,\infty}\le \rho_0$, and then apply Lemma~\ref{lemma finite integrand} to complete the proof.
\end{proof}

For the proof of Theorem~\ref{main result-ruusc1} we need the following result.
\begin{lemma}\label{ZWlowerW} If $L:\Omega\times \MM^{m\times d}\to [0,\infty]$ is a Borel measurable integrand which satisfies \ref{H4} then for a.a. $x\in\Omega$ and for every $\xi\in\MM^{m\times d}$ we have \[\Z L(x,\xi)\le L(x,\xi).\]
\end{lemma}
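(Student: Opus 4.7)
The plan is a short case analysis followed by the obvious test-function choice $\varphi\equiv 0$.

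First I would dispose of the trivial case: if $L(x,\xi)=\infty$, then the inequality $\Z L(x,\xi)\le L(x,\xi)$ holds vacuously, so nothing needs to be said.

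Next I would treat $\xi\in\dom L(x,\cdot)$. Pick $x$ in the full-measure set on which \ref{H4} is valid; then $\xi\in\Lambda_L(x)$, so
\[
L(x,\xi)=\lim_{\eps\to 0}\fint_{\Q_\eps(x)} L(y,\xi)\,dy.
\]
Using the equivalent formulation \eqref{gendacoro} of $\Z L$, the admissible test function $\varphi\equiv 0\in W^{1,p}_0(\Q_\eps(x);\RR^m)$ gives, for every $\eps>0$,
\[
\inf\left\{\fint_{\Q_\eps(x)} L(y,\xi+\nabla\varphi(y))\,dy : \varphi\in W^{1,p}_0(\Q_\eps(x);\RR^m)\right\}\le \fint_{\Q_\eps(x)} L(y,\xi)\,dy.
\]
Taking the $\liminf$ as $\eps\to 0$ on the left and the limit on the right (which exists by \ref{H4}) yields
\[
\Z L(x,\xi)\le \lim_{\eps\to 0}\fint_{\Q_\eps(x)} L(y,\xi)\,dy=L(x,\xi),
\]
which completes the argument.

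There is essentially no obstacle here: the content of the lemma is just that \ref{H4} tells us the pointwise value $L(x,\xi)$ is recovered as an averaging limit on cubes, and the zero test function in the definition of $\Z L$ is admissible. The only minor care point is to note that the exceptional null set in \ref{H4} may a priori depend on $\xi$; but \ref{H4} as stated asserts a property holding for a.a. $x\in\Omega$ uniformly over $\xi\in\dom L(x,\cdot)$, so the conclusion holds for a.a. $x$ and every $\xi\in\MM^{m\times d}$ simultaneously.
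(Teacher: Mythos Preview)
Your proposal is correct and follows essentially the same approach as the paper: a case split on whether $\xi\in\dom L(x,\cdot)$, followed by taking $\varphi\equiv 0$ in the definition of $\Z L$ and invoking \ref{H4} to identify the averaging limit with $L(x,\xi)$. The paper's version is terser but logically identical, including the explicit identification of the full-measure set $\Omega'=\{x\in\Omega:\dom L(x,\cdot)\subset\Lambda_L(x)\}$ on which the argument runs.
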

\begin{proof} Fix $x_0\in \Omega^\prime$ where $\Omega^\prime:=\{x\in\Omega:\dom L(x,\cdot)\subset\Lambda_L(x)\}$. We have $\vert\Omega\ssetminus\Omega^\prime\vert=0$ since \ref{H4}. If $\xi\notin\dom L(x_0,\cdot)$ then $\Z L(x_0,\xi)\le \infty=L(x_0,\xi)$. Now, if $\xi\in\dom L(x_0,\cdot)$ then by \ref{H4} $\lim_{\eps\to 0}\fint_{\Q_\eps(x_0)}L(z,\xi)dz=L(x_0,\xi)$. Using the definition of $\Z L$ we finish the proof.
\end{proof}

\begin{lemma}\label{ZL satisfies H3} If $L:\Omega\times \MM^{m\times d}\to [0,\infty]$ is a Borel measurable integrand which satisfies \ref{H3} then $\Z L$ satisfies \ref{H3}.
\end{lemma}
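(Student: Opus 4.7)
Set $\eta := t\xi + (1-t)\zeta$ and, for $\chi \in \{\xi, \zeta, \eta\}$ and $\eps > 0$, write
\[
a_\chi(\eps) := \inf_{\varphi \in W^{1,p}_0(Y;\RR^m)} \int_Y L(x + \eps y, \chi + \nabla\varphi(y))\,dy,
\]
so that $\Z L(x,\chi) = \liminf_{\eps \to 0} a_\chi(\eps)$ by definition. The plan is to derive the scale-by-scale inequality
\[
a_\eta(\eps) \le C\bigl(1 + a_\xi(\eps) + a_\zeta(\eps)\bigr) \qquad \text{for every } \eps > 0,
\]
and then pass to the $\liminf$ in $\eps$ to obtain \ref{H3} for $\Z L$.

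For the scale-by-scale inequality I would use the standard convex-combination trick. Fix $\eps > 0$ and $\gamma > 0$, pick $\varphi^\xi_\eps, \varphi^\zeta_\eps \in W^{1,p}_0(Y;\RR^m)$ that are $\gamma$-near-optimal for $a_\xi(\eps)$ and $a_\zeta(\eps)$, respectively, and test the inner infimum defining $a_\eta(\eps)$ against $\varphi_\eps := t\varphi^\xi_\eps + (1-t)\varphi^\zeta_\eps \in W^{1,p}_0(Y;\RR^m)$. Since $\eta + \nabla\varphi_\eps(y) = t(\xi + \nabla\varphi^\xi_\eps(y)) + (1-t)(\zeta + \nabla\varphi^\zeta_\eps(y))$, applying \ref{H3} to $L$ pointwise in $y$, integrating over $Y$, and letting $\gamma \downarrow 0$ yields the displayed scale-by-scale inequality.

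Passing to the $\liminf$ is the step I expect to be the main obstacle. Assume $\Z L(x,\xi), \Z L(x,\zeta) < \infty$ (else there is nothing to prove). Taking $\liminf$ in the scale-by-scale inequality gives
\[
\Z L(x,\eta) \le C\bigl(1 + \liminf_{\eps \to 0}\bigl(a_\xi(\eps) + a_\zeta(\eps)\bigr)\bigr),
\]
but a priori $\liminf_\eps(a_\xi(\eps) + a_\zeta(\eps))$ may strictly exceed $\liminf_\eps a_\xi(\eps) + \liminf_\eps a_\zeta(\eps) = \Z L(x,\xi) + \Z L(x,\zeta)$, because the near-optimal scales for $\xi$ and $\zeta$ need not coincide. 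To close the gap I would refine the test functions in the first step so that they are simultaneously near-optimal at the same scale $\eps$: tile $\Q_\eps(x)$ by rescaled near-minimizers of $a_\xi$ obtained at a much smaller scale $\eps_1(\eps)$, chosen so that $a_\xi(\eps_1(\eps))$ is close to $\Z L(x,\xi)$, glued by cutoffs near subcube boundaries, and do the analogous construction for $\zeta$ at some smaller scale $\eps_2(\eps)$. This should produce a simultaneously near-optimal pair at the common scale $\eps$, with costs $\Z L(x,\xi) + o(1)$ and $\Z L(x,\zeta) + o(1)$, from which the scale-by-scale bound together with the $\liminf$ yields the desired \ref{H3} for $\Z L$.
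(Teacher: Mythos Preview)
Your core argument---take near-minimizers $\varphi^\xi_\eps,\varphi^\zeta_\eps$, test with the convex combination $t\varphi^\xi_\eps+(1-t)\varphi^\zeta_\eps$, and apply \ref{H3} pointwise under the integral---is exactly what the paper does.

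The divergence is in how the passage $\eps\to 0$ is handled. The paper does not engage with the mismatched-subsequence issue you raise: it simply asserts that one may choose families $\{\varphi_{\xi,\eps}\}_\eps$ and $\{\varphi_{\zeta,\eps}\}_\eps$ for which the \emph{full limits}
\[
\lim_{\eps\to 0}\fint_{\Q_\eps(x)} L(y,\xi+\nabla \varphi_{\xi,\eps})\,dy= \Z L(x,\xi),\qquad
\lim_{\eps\to 0}\fint_{\Q_\eps(x)} L(y,\zeta+\nabla \varphi_{\zeta,\eps})\,dy= \Z L(x,\zeta)
\]
exist (not merely the $\liminf$). Once both are genuine limits, the final inequality is immediate, since then $\liminf_\eps(A_\eps+B_\eps)=\lim A_\eps+\lim B_\eps$. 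The paper gives no separate justification for this existence claim from \ref{H3} alone; note that such existence is exactly what one gets from Proposition~\ref{exist-ZL} under the additional hypotheses \ref{H4} and \ref{regintegrand}, which are in force whenever the lemma is invoked later.

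So the obstacle you identify is one the paper sidesteps by assertion rather than argument. Your tiling repair goes beyond what the paper does, but be aware it has its own snag: the sub-cubes of $\Q_\eps(x)$ are centred at points $x_j\neq x$, so a near-minimizer realising $a_\xi(\eps_1)$---which is computed on $\Q_{\eps_1}(x)$---says nothing about $\inf_\psi\fint_{\Q_{\eps_1}(x_j)}L(z,\xi+\nabla\psi)\,dz$, because the first slot of $L$ now ranges over a different region. Without further control on the $x$-dependence of $L$ (of the sort \ref{H4} and \ref{regintegrand} provide), the tiling does not obviously close the gap either.
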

\begin{proof} Let $x\in\Omega$, $\xi,\zeta\in\MM^{m\times d}$ and $t\in ]0,1[$. There exist $\{\varphi_{\xi,\eps}\}_\eps,\{\varphi_{\zeta,\eps}\}_\eps\subset W^{1,p}_0(\Q_\eps(x);\RR^m)$ such that
\begin{align*}
&\lim_{\eps\to 0}\fint_{\Q_\eps(x)} L(y,\xi+\nabla \varphi_{\xi,\eps})dy= \Z L(x,\xi)\\
&\lim_{\eps\to 0}\fint_{\Q_\eps(x)} L(y,\zeta+\nabla \varphi_{\zeta,\eps})dy= \Z L(x,\zeta).
\end{align*}
Since $\varphi_\eps:=t\varphi_{\xi,\eps}+(1-t)\varphi_{\zeta,\eps}\in W^{1,p}_0(\Q_\eps(x);\RR^m)$ we have 
\begin{align*}
\Z L(x,t\xi+(1-t)\zeta)&\le \liminf_{\eps\to 0}\fint_{\Q_\eps(x)} L(y,t\xi+(1-t)\zeta+\nabla \varphi_\eps)dy\\
&\le C\liminf_{\eps\to 0}\fint_{\Q_\eps(x)} \big(1+ L(y,\xi+\nabla\varphi_{\xi,\eps})+L(y,\xi+\nabla\varphi_{\zeta,\eps})\big)dy\\
&\le C\big(1+\Z L(x,\xi)+\Z L(x,\zeta)\big)
\end{align*}
which completes the proof.
\end{proof}

The following result shows that the condition \ref{H3} is shared by integrands with the same growth.
\begin{lemma}\label{L growth} If $L_1,L_2:\Omega\times\MM^{m\times d}\to [0,\infty]$ are two Borel measurable integrands such that for some $\alpha,\beta\ssup 0$ and for every $(x,\xi)\in\Omega\times \MM^{m\times d}$ it holds
\begin{align*}
\alpha L_2(x,\xi)\le L_1(x,\xi)\le\beta(1+L_2(x,\xi)),
\end{align*}
then $L_1$ satisfies \ref{H3} if and only if $L_2$ satisfies \ref{H3}.
\end{lemma}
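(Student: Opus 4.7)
The plan is a direct chaining of the two sandwich inequalities, with no measure-theoretic subtlety involved. The hypothesis is symmetric in spirit: from $\alpha L_2\le L_1\le\beta(1+L_2)$ one immediately extracts both $L_2\le \frac{1}{\alpha}L_1$ and $L_2\ge \frac{1}{\beta}L_1-1$, so the roles of $L_1$ and $L_2$ are fully interchangeable up to constants. Therefore it suffices to prove one implication; the other is then obtained by swapping indices and adjusting the constants.

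I would first assume $L_2$ satisfies \ref{H3} with a constant $C$, fix $(x,\xi,\zeta,t)\in\Omega\times\MM^{m\times d}\times\MM^{m\times d}\times ]0,1[$, and estimate
\[
L_1(x,t\xi+(1-t)\zeta)\le \beta\bigl(1+L_2(x,t\xi+(1-t)\zeta)\bigr)\le \beta\bigl(1+C(1+L_2(x,\xi)+L_2(x,\zeta))\bigr).
\]
Then I would use the lower bound $L_2\le \frac{1}{\alpha}L_1$ to replace each $L_2(x,\cdot)$ on the right by $\frac{1}{\alpha}L_1(x,\cdot)$, producing a bound of the form
\[
L_1(x,t\xi+(1-t)\zeta)\le \beta(1+C)+\frac{\beta C}{\alpha}\bigl(L_1(x,\xi)+L_1(x,\zeta)\bigr),
\]
and finally collect the constants as $C_1:=\max\{\beta(1+C),\beta C/\alpha\}$ so that \ref{H3} holds for $L_1$ with constant $C_1$.

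For the converse direction the calculation is entirely analogous: start from $L_2\le \frac{1}{\alpha}L_1$, apply \ref{H3} for $L_1$, then use $L_1\le \beta(1+L_2)$ to replace each $L_1(x,\cdot)$ on the right by an affine expression in $L_2(x,\cdot)$, and re-collect constants. There is no real obstacle here; the only thing to watch is bookkeeping of the additive constants (the $1$ inside $\beta(1+L_2)$) so that the final bound is a clean $C'(1+L_2(x,\xi)+L_2(x,\zeta))$ with no stray $\beta$ or $\alpha$ factors attached to the variable terms. Because everything is pointwise in $(x,\xi,\zeta)$, Borel measurability plays no role in the argument.
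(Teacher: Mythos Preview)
Your proposal is correct and follows essentially the same approach as the paper: a direct pointwise chaining of the two sandwich inequalities $\alpha L_2\le L_1\le\beta(1+L_2)$ together with \ref{H3} for one of the integrands, followed by a re-collection of constants. The paper happens to write out the implication $L_1\Rightarrow L_2$ explicitly while you write out $L_2\Rightarrow L_1$, but the computations are identical in structure.
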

\begin{proof} Assume that $L_1$ satisfies \ref{H3}. Fix $x\in\Omega$. Let $\xi,\zeta\in\MM^{m\times d}$ and $t\in ]0,1[$. Then
\begin{align*}
L_2(x,t\xi+(1-t)\zeta)&\le \frac1\alpha L_1(x,t\xi+(1-t)\zeta)\\
&\le\frac{C}{\alpha}\big(1+L_1(x,\xi)+L_1(x,\zeta)\big)\\
&\le \frac{C}{\alpha}\big(1+2\beta + \beta L_2(x,\xi)+\beta L_2(x,\zeta)\big)\\
&\le \frac{C}{\alpha}(1+2\beta)\big(1+L_2(x,\xi)+L_2(x,\zeta)\big).
\end{align*}
In the same manner we can verify that if $L_2$ satisfies \ref{H3} then $L_1$ too.
\end{proof}

\subsection{Ru-usc functionals}\label{Ru-usc functionals} Let $(X,\tau)$ be a topological vector space and $J:X\to [0,\infty]$ be a function. For each $a\ssup 0$ and $D\subset 
\dom J$ we define $\Delta_{J,D}^a:[0,1]\to ]-\infty,\infty]$ by
\begin{align*}
\Delta_{J,D}^a(t):=\sup_{u\in D}\frac{J(tu)-J(u)}{a+J(u)}.
\end{align*}
When $D=\dom J$ we will write $\Delta_{J}^a:=\Delta_{J,D}^a$
\begin{definition}\label{ru-usc-Def-functional}
Given $D\subset \dom J$, we say that $J$ is {\em ru-usc in $D$}, if there exists $a\ssup 0$ such that
\begin{align*}
 \limsup_{t\to 1}\Delta^a_{J,D}(t)\leq 0.
\end{align*}
 \end{definition}
 
 \begin{remark}\label{ReMaRk-Delta-ru-usc-remARK1-functional}
 If $J$ is ru-usc in $D$ then 
 \begin{equation}\label{Delta-ru-usc-remARK1-functional}
 \limsup_{t\to1}J(tu)\leq J(u)\mbox{ for all }u\in D
 \end{equation}
Indeed, given $u\in D$, we have
\begin{align*}
 J(tu)\leq\Delta^a_{J,D}(t)\left(a+J(u)\right)+J(u)\hbox{ for all }t\in[0,1]
\end{align*}
 which gives \eqref{Delta-ru-usc-remARK1-functional} since $a+J(u)>0$ and $\limsup_{t\to 1}\Delta^a_J(t)\leq 0$.
 \end{remark}
 \begin{remark}
 If there exists $u_0\in D$ such that $J$ is ``radially" lower semicontinuous at $u_0$ in the sense that 
 \begin{align}\label{radially-lsc}
\liminf_{t\to 1}J(tu_0)-J(u_0)\ge 0.
\end{align}
 Then
 \begin{equation}\label{Delta-ru-usc-sci-functional}
 \liminf_{t\to 1}\Delta^a_{J,D}(t)\geq 0\mbox{ for all }a\ssup 0
 \end{equation}
Indeed, given such $u\in D$, for any $a\ssup 0$ we have
\begin{align*}
 \Delta_{J,D}^a(t)\geq {J(tu_0)-J(u_0)\over a+J(u_0)}\hbox{ for all }t\in[0,1]
\end{align*}
 which gives \eqref{Delta-ru-usc-sci-functional} since $a+J(u_0)>0$ and \eqref{radially-lsc}.
 \end{remark}
 
For a subset $D\subset X$, we denote by $\overline{D}^\tau$ the closure of $D$ with respect to $\tau$. 
\begin{lemma}\label{Pre-Extension-Result-for-ru-usc-Functionals} Let $D\subset \dom J$ be a $\tau$-star shaped subset with respect to $0$, i.e.,
\begin{equation}\label{Homothecie-Assumption}
t\overline{D}^\tau\subset D\hbox{ for all }t\in]0,1[.
\end{equation}
If $J$ is ru-usc in $D$ then
\begin{align*}
\liminf_{t\to 1}J(tu)=\limsup_{t\to 1}J(tu)
\end{align*}
for all $u\in \overline{D}^\tau$.
\end{lemma}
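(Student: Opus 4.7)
The plan is to reduce the statement to a one-variable question by setting $g(t):=J(tu)$ for $t\in\,]0,1[$. The star-shape hypothesis $t\overline{D}^\tau\subset D$ guarantees $tu\in D\subset\dom J$, so $g$ is a well-defined function on $]0,1[$ with values in $[0,\infty[$. Writing $\alpha:=\liminf_{t\to 1^-}g(t)$ and $\beta:=\limsup_{t\to 1^-}g(t)$, the inequality $\alpha\le\beta$ is automatic, so the task reduces to proving $\beta\le\alpha$.

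The main tool is a scaling inequality derived directly from the definition of $\Delta_{J,D}^a$: applied to the element $v=tu\in D$ with scaling parameter $r\in[0,1]$, it gives
\[
g(rt)\ \le\ g(t)+\Delta_{J,D}^a(r)\bigl(a+g(t)\bigr)\qquad\text{for every } r\in[0,1],\ t\in\,]0,1[.
\]
If $\alpha=+\infty$ then $g(t)\to+\infty$ and the equality $\alpha=\beta$ is trivial, so I may assume $\alpha<+\infty$. I would then pick a sequence $t_n\to 1^-$ realising the limsup, $g(t_n)\to\beta$. Since $\alpha=\sup_{\delta>0}\inf_{t\in(1-\delta,1)}g(t)$, in every left neighbourhood of $1$ the function $g$ takes values arbitrarily close to $\alpha$: concretely, for each $n$ I choose $\tilde t_n\in(t_n,1)$ with $g(\tilde t_n)\le\alpha+1/n$. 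Then $\tilde t_n\to 1^-$ forces $\liminf_n g(\tilde t_n)\ge\alpha$ and hence $g(\tilde t_n)\to\alpha$, while $t_n/\tilde t_n\to 1^-$.

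Applying the scaling inequality with $r=t_n/\tilde t_n$ and $t=\tilde t_n$ (so that $rt=t_n$) yields
\[
g(t_n)\ \le\ g(\tilde t_n)+\Delta_{J,D}^a(t_n/\tilde t_n)\bigl(a+g(\tilde t_n)\bigr).
\]
Taking $\limsup_n$, and using the ru-usc hypothesis $\limsup_{r\to 1^-}\Delta_{J,D}^a(r)\le 0$ together with the convergence of $a+g(\tilde t_n)$ to $a+\alpha\in\,]0,\infty[$, the correction term has limsup $\le 0$ (via the elementary bound $\Delta_n\,b_n\le\max(\Delta_n,0)\,b_n$ valid for $b_n\ge 0$). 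Therefore $\beta=\lim_n g(t_n)\le\alpha$, which closes the proof.

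The delicate step is the construction of $\tilde t_n\in(t_n,1)$ with $g(\tilde t_n)$ close to $\alpha$: it rests on the characterisation of $\alpha$ as a supremum of infima on left neighbourhoods of $1$, and crucially requires $\tilde t_n>t_n$ so that the ratio $r=t_n/\tilde t_n$ stays in $[0,1]$ (the domain of $\Delta_{J,D}^a$) while both $\tilde t_n$ and $r$ converge to $1^-$. The case $\alpha=+\infty$ is a trivial separate case, and the inequality $\Delta_{J,D}^a(r)\ge -1$ (which follows from $J\ge 0$ and $a>0$) is what keeps the final $\limsup$ manipulation harmless.
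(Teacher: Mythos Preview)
Your proof is correct and follows essentially the same route as the paper's: both arguments pick sequences $t_n,s_n\to 1^-$ realising the limsup and liminf of $t\mapsto J(tu)$, apply the defining inequality of $\Delta^a_{J,D}$ at the point $s_n u\in D$ with scaling parameter $t_n/s_n$, and pass to the limit using the ru-usc condition. Your construction is in fact slightly more careful than the paper's, since you explicitly force $\tilde t_n>t_n$ so that the ratio $t_n/\tilde t_n$ lies in $[0,1]$ (the domain of $\Delta^a_{J,D}$), a point the paper leaves implicit.
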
 
\begin{proof}
Fix $u\in \overline{D}^\tau$. It suffices to prove that
\begin{equation}\label{ru-usc-AssUmPtIoN1}
\limsup_{t\to 1}J(tu)\leq\liminf_{t\to 1}J(tu).
\end{equation}
Without loss of generality we can assume that $\liminf_{t\to 1}J(tu)<\infty$ and there exist $\{t_n\}_n, \{s_n\}_n\subset]0,1[$ such that:
\begin{itemize}
\item $t_n\to 1$, $s_n\to 1$ and ${t_n\over s_n}\to 1$;
\item $\displaystyle\limsup_{t\to 1}J(tu)=\lim_{n\to\infty}J(t_nu)$;
\item $\displaystyle\liminf_{t\to 1}J(tu)=\lim_{n\to\infty}J(s_nu)$.
\end{itemize}
From \eqref{Homothecie-Assumption} we see that for every $n\geq 1$, $s_nu\in D$ so we can assert that for every $n\geq 1$,
\begin{equation}\label{ru-usc-AssUmPtIoN2}
J(t_n u)\leq a\Delta^a_{J,D}\left({t_n\over s_n}\right)+\left(1+\Delta^a_{J,D}\left({t_n\over s_n}\right)\right)J(s_nu).
\end{equation}
On the other hand, as $J$ is ru-usc in $D$ we have $\limsup_{n\to\infty}\left(1+\Delta^a_{J,D}\left({t_n\over s_n}\right)\right)\leq 1$ and  $\limsup_{n\to\infty}a\Delta^a_{J,D}\left({t_n\over s_n}\right)\leq 0$ since $a>0$, and \eqref{ru-usc-AssUmPtIoN1} follows from \eqref{ru-usc-AssUmPtIoN2} by letting $n\to\infty$.
\end{proof}

\medskip

Define $\widehat{J}:X\to[0,\infty]$ by 
\begin{align*}
\widehat{J}(u):=
\liminf_{t\to 1}J(tu).
\end{align*}
 
\begin{lemma}\label{hatjruusc} If $J$ is ru-usc in a $\tau$-star shaped set $D\subset\dom J$ then $\widehat{J}$ is ru-usc in $\overline{D}^\tau\cap \dom \widehat{J}$.
\end{lemma}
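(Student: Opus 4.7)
The plan is to show that the same constant $a>0$ that witnesses the ru-usc property of $J$ on $D$ also works for $\widehat{J}$ on $\overline{D}^\tau\cap\dom\widehat{J}$. The key idea is to exploit the $\tau$-star-shapedness of $D$ to feed the ru-usc inequality for $J$ with base points of the form $su$, $s\in\,]0,1[$, which all lie in $D$, and then pass to the limit $s\to 1$. This converts the defining inequality for $J$ into the analogous one for $\widehat{J}$, and the star-shapedness is precisely what lets Lemma~\ref{Pre-Extension-Result-for-ru-usc-Functionals} upgrade the defining $\liminf$ of $\widehat{J}$ to an actual limit.

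More precisely, I would fix $u\in\overline{D}^\tau\cap\dom\widehat{J}$ and $t\in\,]0,1[$, and for each $s\in\,]0,1[$ apply the ru-usc inequality for $J$ at the base point $su\in D$ with scale $t$ to obtain
\[
J(tsu)=J(t(su))\le J(su)+\Delta^{a}_{J,D}(t)\bigl(a+J(su)\bigr).
\]
Both $u$ and $tu$ lie in $\overline{D}^\tau$ (the latter because $tu\in t\overline{D}^\tau\subset D\subset\overline{D}^\tau$ by \eqref{Homothecie-Assumption}), so Lemma~\ref{Pre-Extension-Result-for-ru-usc-Functionals} turns the two $\liminf$'s defining $\widehat{J}(u)$ and $\widehat{J}(tu)$ into honest limits along $s\to 1$; the first is finite because $u\in\dom\widehat{J}$, and the second is finite because Remark~\ref{ReMaRk-Delta-ru-usc-remARK1-functional} applied at $tu\in D$ gives $\widehat{J}(tu)\le J(tu)<\infty$. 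Passing to the limit $s\to 1$ in the displayed inequality then yields
\[
\widehat{J}(tu)\le\widehat{J}(u)+\Delta^{a}_{J,D}(t)\bigl(a+\widehat{J}(u)\bigr),
\]
and dividing by $a+\widehat{J}(u)>0$, taking the supremum over $u$, and finally $\limsup_{t\to 1}$ produces
\[
\limsup_{t\to 1}\Delta^{a}_{\widehat{J},\,\overline{D}^\tau\cap\dom\widehat{J}}(t)\le\limsup_{t\to 1}\Delta^{a}_{J,D}(t)\le 0,
\]
which is the ru-usc property for $\widehat{J}$ with the same witnessing constant $a$.

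The one delicate point, and what I expect to be the only real obstacle, is the identification $\lim_{s\to 1}J(tsu)=\widehat{J}(tu)$: straight from the definition, $\widehat{J}(tu)=\liminf_{r\to 1}J(r(tu))$ is only a $\liminf$ and need not even be finite. It is precisely the $\tau$-star-shapedness hypothesis \eqref{Homothecie-Assumption} -- which forces $tu$ back into $D$, hence into $\overline{D}^\tau$ -- that allows Lemma~\ref{Pre-Extension-Result-for-ru-usc-Functionals} and Remark~\ref{ReMaRk-Delta-ru-usc-remARK1-functional} to supply, respectively, the existence of the limit and its finiteness. Once this bookkeeping is secured, the remaining manipulations reduce to passing to limits and suprema in the stated pointwise estimate.
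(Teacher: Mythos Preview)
Your proposal is correct and follows essentially the same route as the paper: fix $u\in\overline{D}^\tau\cap\dom\widehat{J}$ and $t\in]0,1[$, use star-shapedness to place $su$ and $tu$ in $D$, invoke Lemma~\ref{Pre-Extension-Result-for-ru-usc-Functionals} to turn the defining $\liminf$'s into limits, pass the ru-usc inequality for $J$ at base point $su$ to the limit $s\to1$, and conclude $\Delta^a_{\widehat{J},\overline{D}^\tau\cap\dom\widehat{J}}(t)\le\Delta^a_{J,D}(t)$. The paper writes the inequality as a ratio before passing to the limit whereas you multiply through first and divide afterward, and you are slightly more explicit about the finiteness of $\widehat{J}(tu)$ via Remark~\ref{ReMaRk-Delta-ru-usc-remARK1-functional}, but the argument is the same.
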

\begin{proof} Fix $t\in]0,1[$ and $u\in \overline{D}^\tau \cap \dom \widehat{J}$. We have $tu\in D$ since \eqref{Homothecie-Assumption} holds. By Lemma~\ref{Pre-Extension-Result-for-ru-usc-Functionals} we can assert that:
\begin{itemize}
\item $\widehat{J}(u)=\lim\limits_{s\to 1}J(su)$;
\item $\widehat{J}(tu)=\lim\limits_{s\to 1}J(s(tu))$,
\end{itemize}
and consequently
\begin{equation}\label{LimiT-Prop-ru-usc-FUNcTIONals}
{\widehat{J}(tu)-\widehat{J}(u)\over a+\widehat{J}(u)}=\lim_{s\to1}{J(t(su))-J(su)\over a+J(su)}.
\end{equation}
On the other hand, by \eqref{Homothecie-Assumption} we have $su\in D$ for all $s\in]0,1[$ so
\begin{align*}
{J(t(su))-J(su)\over a+J(su)}\leq\Delta^a_{L,D}(t)\hbox{ for all }s\in]0,1[.
\end{align*}
Letting $s\to 1$ and using \eqref{LimiT-Prop-ru-usc-FUNcTIONals} we deduce that $\Delta^a_{\widehat{J},\overline{D}^\tau\cap \dom \widehat{J}}(t)\leq\Delta^a_{J,D}(t)$ for all $t\in]0,1[$, which implies that $\widehat{J}$ is ru-usc in $\overline{D}^\tau \cap \dom \widehat{J}$ since $J$ is ru-usc in $D$.
\end{proof}

\begin{theorem}\label{Extension-Result-for-ru-usc-Functionals}
If $J$ is ru-usc in a $\tau$-star shaped set $D\subset\dom J$, and $\tau$ sequentially lower semicontinuous on $D$ then{\rm:}
\begin{itemize}
\item[(i)] $
\widehat{J}(u)=\left\{
\begin{array}{ll}
J(u)&\hbox{if }u\in D\\
\lim\limits_{t\to 1}J(tu)&\hbox{if }u\in\overline{D}^\tau\ssetminus D
\end{array}
\right.
$
\item[(ii)]$\widehat{J}=\overline{J}^{D}$ on $\overline{D}^\tau$ where 
\[\overline{J}^{D}(u):=\inf\left\{\liminf_{n\to \infty}J(u_n):D\ni u_n\stackrel{\tau}{\to}u\right\}.\]
\end{itemize}
\end{theorem}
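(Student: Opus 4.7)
The backbone observation is that the star-shapedness assumption \eqref{Homothecie-Assumption} forces $tu\in D$ whenever $u\in\overline{D}^\tau$ and $t\in]0,1[$, so Lemma~\ref{Pre-Extension-Result-for-ru-usc-Functionals} applies to every $u\in\overline{D}^\tau$ and the $\liminf$ defining $\widehat{J}(u)$ is in fact a limit there. I also use that continuity of scalar multiplication in the topological vector space $(X,\tau)$ yields $tu\stackrel{\tau}{\to}u$ as $t\to 1$, together with the $\tau$-sequential lower semicontinuity of $J$ on $D$.

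For part (i), suppose first $u\in D$. By the opening observation, $\widehat{J}(u)=\lim_{t\to 1}J(tu)$ exists, and Remark~\ref{ReMaRk-Delta-ru-usc-remARK1-functional} then yields $\widehat{J}(u)\leq J(u)$. The reverse inequality follows by applying sequential lower semicontinuity of $J$ on $D$ to any sequence $t_nu\in D$ with $t_n\to 1^-$, giving $J(u)\leq\liminf_n J(t_nu)=\widehat{J}(u)$. If instead $u\in\overline{D}^\tau\setminus D$, the asserted identity $\widehat{J}(u)=\lim_{t\to 1}J(tu)$ is again provided by Lemma~\ref{Pre-Extension-Result-for-ru-usc-Functionals}.

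For part (ii), the inequality $\overline{J}^D(u)\leq\widehat{J}(u)$ is obtained by testing the infimum against $u_n:=t_nu$ with $t_n\to 1^-$: star-shapedness gives $u_n\in D$ and $u_n\stackrel{\tau}{\to}u$, so $\overline{J}^D(u)\leq\liminf_n J(t_nu)=\widehat{J}(u)$. For the reverse, fix any $\{u_n\}\subset D$ with $u_n\stackrel{\tau}{\to}u$ and set $A:=\liminf_n J(u_n)$; the case $A=\infty$ is trivial, so assume $A<\infty$. For each $t\in]0,1[$ close enough to $1$ that $\Delta^a_{J,D}(t)<\infty$, star-shapedness gives $tu_n,tu\in D$ and scaling gives $tu_n\stackrel{\tau}{\to}tu$, so sequential lower semicontinuity combined with the pointwise bound from the definition of $\Delta^a_{J,D}$ yields
\begin{equation*}
J(tu)\leq\liminf_{n\to\infty}J(tu_n)\leq a\Delta^a_{J,D}(t)+\bigl(1+\Delta^a_{J,D}(t)\bigr)A.
\end{equation*}
Passing to $\liminf_{t\to 1}$ on the left and to $\limsup_{t\to 1}$ on the right (using $\liminf\leq\limsup$) and invoking $\limsup_{t\to 1}\Delta^a_{J,D}(t)\leq 0$ yields $\widehat{J}(u)\leq A$; infimizing over admissible sequences gives $\widehat{J}(u)\leq\overline{J}^D(u)$.

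The delicate point is the ordering of the two limit passages in this last display --- first $n\to\infty$, then $t\to 1$ --- along with the bookkeeping of $\liminf$ versus $\limsup$ across the ru-usc inequality; everything else is a direct consequence of star-shapedness combined with Lemma~\ref{Pre-Extension-Result-for-ru-usc-Functionals}.
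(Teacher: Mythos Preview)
Your proof is correct and follows essentially the same route as the paper's: both arguments rely on Lemma~\ref{Pre-Extension-Result-for-ru-usc-Functionals} to turn the $\liminf$ into a limit on $\overline{D}^\tau$, use star-shapedness to push $tu$ and $tu_n$ into $D$, and combine the $\tau$-sequential lower semicontinuity on $D$ with the ru-usc inequality $J(tu_n)\le a\Delta^a_{J,D}(t)+(1+\Delta^a_{J,D}(t))J(u_n)$ in the same order of limits ($n\to\infty$ first, then $t\to1$). The only cosmetic difference is that the paper selects at the outset a sequence realizing $\overline{J}^D(u)$ as an actual limit, whereas you work with an arbitrary admissible sequence, set $A=\liminf_n J(u_n)$, and infimize at the end; this requires the harmless observation (implicit in your display) that $1+\Delta^a_{J,D}(t)\ge 0$, which holds since $J\ge 0$ and $a>0$ force $\Delta^a_{J,D}(t)>-1$.
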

\begin{proof}
(i) By Lemma~\ref{Pre-Extension-Result-for-ru-usc-Functionals} we have $\widehat{J}(u)=\lim_{t\to1}J(tu)$ for all $u\in \overline{D}^\tau$. From Remark~\ref{ReMaRk-Delta-ru-usc-remARK1-functional} we see that if $u\in D$ then $\limsup_{t\to1}J(tu)\leq J(u)$. On the other hand, from \eqref{Homothecie-Assumption} it follows that if $u\in D$ then $tu\in D$ for all $t\in]0,1[$. Thus, $\liminf_{t\to 1}J(tu)\geq J(u)$ whenever $u\in D$ since $J$ is $\tau$ lsc on $D$, and (i) follows. 

(ii) Let $u\in \overline{D}^\tau$. Using \eqref{Homothecie-Assumption} we have $tu\in D$ for all $t\in]0,1[$, so by Remark~\ref{ReMaRk-Delta-ru-usc-remARK1-functional} and lower semicontinuity it follows that $\widehat{J}(u)=\lim_{t\to 1}J(tu)\ge \overline{J}^D(u)$. It remains to prove that 
\begin{align}\label{(iii)-prop-ru-usc-target0-f}
\overline{J}^D(u)\ge\widehat{J}(u). 
\end{align}
Choose a sequence $\{u_n\}_n\subset D$ such that $u_n\stackrel{\tau}{\to} u$ and $\lim_{n\to \infty}J(u_n)=\overline{J}^D(u)$.
By \eqref{Homothecie-Assumption} we see that $tu_n\in D$ for all $t\in]0,1[$ and all $n\geq 1$, and consequently
\begin{align*}
\liminf_{n\to \infty}J(tu_n)\geq J(tu)\hbox{ for all }t\in]0,1[
\end{align*}
because $J$ is $\tau$ lsc on $D$. It follows that
\begin{equation}\label{(iii)-prop-ru-usc-target1-f}
\limsup_{t\to1}\liminf_{n\to \infty}J(tu_n)\geq\widehat{J}(u).
\end{equation}
On the other hand, for every $n\geq 1$ and every $t\in [0,1]$, we have
\begin{align*}
J(tu_n)\leq(1+\Delta^a_{J,D}(t))J(u_n)+a\Delta^a_{J,D}(t).
\end{align*}
As $J$ is ru-usc in $D$, letting $n\to\infty$ and $t\to1$ we obtain
\begin{align*}
\limsup_{t\to1}\liminf_{n\to \infty}J(tu_n)\leq\lim_{n\to\infty}J(u_n)=\overline{J}^D(u)
\end{align*}
which gives \eqref{(iii)-prop-ru-usc-target0-f} by combining it with \eqref{(iii)-prop-ru-usc-target1-f}.
\end{proof}
The following result is a consequence of Theorem~\ref{Extension-Result-for-ru-usc-Functionals}. For a functional $F:X\to [0,\infty]$ we denote by $\overline{F}:X\to [0,\infty]$ the $\tau$ sequential lsc envelope defined by
\begin{align*}
\overline{F}(u):=\inf\left\{\liminf_{n\to \infty}F(u_n):X\ni u_n\stackrel{\tau}{\to}u\right\}.            
\end{align*}
\begin{corollary}\label{Extension-envelope}
Assume that $\dom F$ is $\tau$-star shaped with respect to $0$, $\overline{F}$ is ru-usc in $\dom F$, and $\overline{F}=I$ on $\dom F$ where $I:X\to [0,\infty]$ is a functional. Then
\begin{align}\label{Extension-envelope-formula}
\overline{F}(u):=\left\{
\begin{array}{ll}
 I(u) &\mbox{ if }u\in\dom F     \\
\displaystyle\lim_{t\to1}  I(tu)&  \mbox{ if }u\in{\dom \overline{F}}\setminus\dom F    \\
  \infty&   \mbox{ otherwise.}  
\end{array}
\right.
\end{align}
\end{corollary}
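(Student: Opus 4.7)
The plan is to invoke Theorem~\ref{Extension-Result-for-ru-usc-Functionals} with $J:=\overline{F}$ and $D:=\dom F$, and then to identify the auxiliary objects $\widehat{\overline{F}}$ and $\overline{\overline{F}}^{D}$ produced by that theorem with $\overline{F}$ itself on the relevant sets. To verify the hypotheses: since $\overline{F}\le F$ pointwise we have $D\subset\dom\overline{F}=\dom J$; the set $D$ is $\tau$-star shaped with respect to $0$ by assumption; $\overline{F}$ is ru-usc in $D$ by assumption; and $\overline{F}$ is sequentially $\tau$-lsc on $X$ by construction, hence on $D$. Theorem~\ref{Extension-Result-for-ru-usc-Functionals}(i) then gives
\[
\widehat{\overline{F}}(u)=\overline{F}(u)=I(u)\text{ for }u\in\dom F,\qquad \widehat{\overline{F}}(u)=\lim_{t\to 1}\overline{F}(tu)\text{ for }u\in\overline{\dom F}^{\tau}\setminus\dom F.
\]
Since the star-shaped property forces $tu\in\dom F$ whenever $u\in\overline{\dom F}^{\tau}$ and $t\in]0,1[$, the hypothesis $\overline{F}=I$ on $\dom F$ lets me replace $\overline{F}(tu)$ by $I(tu)$ in the second identity.

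The heart of the argument is showing $\widehat{\overline{F}}=\overline{F}$ on $\overline{\dom F}^{\tau}$. Part (ii) of the theorem asserts $\widehat{\overline{F}}=\overline{\overline{F}}^{D}$ on $\overline{\dom F}^{\tau}$, with $\overline{\overline{F}}^{D}(u):=\inf\{\liminf_{n}\overline{F}(u_n):D\ni u_n\stackrel{\tau}{\to}u\}$, so it suffices to prove $\overline{\overline{F}}^{D}=\overline{F}$ on $\overline{\dom F}^{\tau}$. The inequality $\overline{F}\le\overline{\overline{F}}^{D}$ is immediate from the sequential $\tau$-lsc of $\overline{F}$. For the reverse, the case $\overline{F}(u)=\infty$ is trivial; when $\overline{F}(u)<\infty$, I would pick a sequence $u_n\stackrel{\tau}{\to}u$ with $\liminf_{n}F(u_n)=\overline{F}(u)$ and extract a subsequence along which $F(u_{n_k})\to\overline{F}(u)<\infty$, so that $u_{n_k}\in\dom F=D$ for $k$ large; combining this with $\overline{F}\le F$ yields $\overline{\overline{F}}^{D}(u)\le\overline{F}(u)$.

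Finally I would assemble the three branches of the formula: on $\dom F$, $\overline{F}=I$ by assumption; for $u\in\dom\overline{F}\setminus\dom F$, the same extraction argument shows $u\in\overline{\dom F}^{\tau}$, and the chain $\overline{F}(u)=\widehat{\overline{F}}(u)=\lim_{t\to 1}I(tu)$ closes the second case; off $\dom\overline{F}$ the value $\infty$ is forced by definition. The main obstacle is the minimizing-sequence extraction used to establish $\overline{\overline{F}}^{D}\le\overline{F}$ on $\overline{\dom F}^{\tau}$, since this is the one place where one must exploit that sequences defining $\overline{F}$ can be forced to live inside $\dom F$; everything else is a direct translation of Theorem~\ref{Extension-Result-for-ru-usc-Functionals} via the star-shaped hypothesis and the identification $\overline{F}=I$ on $\dom F$.
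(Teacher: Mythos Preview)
Your proof is correct and follows essentially the same route as the paper's: both reduce to Theorem~\ref{Extension-Result-for-ru-usc-Functionals} with $D=\dom F$ and rely on the same minimizing-sequence extraction to identify the $D$-restricted envelope with $\overline{F}$. The only cosmetic difference is that you take $J=\overline{F}$ (so sequential $\tau$-lower semicontinuity is automatic), whereas the paper takes $J=I$ and first records $I=\overline{I}^{\dom F}$ on $\dom F$ to secure the required lower semicontinuity of $I$ there.
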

\begin{proof} We have $\overline{F}=\overline{F}^{\dom F}=\overline{I}^{\dom F}$ since $\overline{F}=I$ on $\dom F$. Hence $I=\overline{I}^{\dom F}$ on $\dom F$ so $I$ is $\tau$ lsc on $\dom F$. Apply Theorem \ref{Extension-Result-for-ru-usc-Functionals} with $I=J$ and $D=\dom F$, it follows that $\widehat{I}=\overline{I}^{\dom F}$ so $\overline{F}=\widehat{I}$ on $\overline{\dom F}^\tau$.
\end{proof}
\subsubsection{Ru-usc integrands}
Let $M\subset\RR^d$ be a measurable set and let $L:M\times\MM^{m\times d}\to[0,\infty]$ be a measurable integrand. For each $x\in M$ and for each $a\in L^1_{\rm loc}(M;]0,\infty])$, we define $\Delta_L^a:[0,1]\to]-\infty,\infty]$ by
\begin{align*}
 \Delta_L^a(t):=\esssup_{x\in M}\sup_{\xi\in \dom L(x,\cdot)}{L(x,t\xi)-L(x,\xi)\over a(x)+L(x,\xi)}.
\end{align*}
 \begin{definition}\label{ru-usc-Def}
 We say that $L$ is radially uniformly upper semicontinuous (ru-usc) if there exists $a\in L^1_{\rm loc}(M;]0,\infty])$ such that
\begin{align*}
 \limsup_{t\to 1}\Delta^a_L(t)\leq 0.
\end{align*}
 \end{definition}
 \begin{remark}\label{ReMaRk-Delta-ru-usc-remARK1}
 If $L$ is ru-usc then 
 \begin{equation}\label{Delta-ru-usc-remARK1}
 \limsup_{t\to1}L(x,t\xi)\leq L(x,\xi)
 \end{equation}
 for all $x\in M$ and all $\xi\in\dom L(x,\cdot)$. 
 \end{remark}
 \begin{remark}
 If there exist $x\in M$ and $\xi\in\dom L(x,\cdot)$ such that $L(x,\cdot)$ is lsc at $\xi$ then
 \begin{equation}\label{Delta-ru-usc-sci}
 \liminf_{t\to 1}\Delta^a_L(t)\geq 0
 \end{equation}
 for all $a\in L^1_{\rm loc}(M;]0,\infty])$.
 \end{remark}

\begin{lemma}\label{ZLruusc} If $L:\Omega\times \MM^{m\times d}\to [0,\infty]$ is a ru-usc Borel measurable integrand then $\Z L$ is ru-usc and $\Delta_{\Z L}^a(t)\le \Delta_{L}^a(t)$ for all $t\in ]0,1[$.
\end{lemma}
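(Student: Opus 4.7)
The plan is to prove the stronger pointwise bound $\Delta_{\Z L}^a(t)\le \Delta_L^a(t)$ for every $t\in ]0,1[$; once this is in hand, the ru-usc property of $\Z L$ follows by taking $\limsup$ as $t\to 1$ on both sides and using that $L$ is ru-usc.

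Fix $t\in ]0,1[$ and a point $x$ in the full-measure subset of $\Omega$ on which (i) the ru-usc inequality $L(y,t\zeta)\le L(y,\zeta)+\Delta_L^a(t)(a(y)+L(y,\zeta))$ holds for every $\zeta\in\dom L(y,\cdot)$, and (ii) the Lebesgue differentiation theorem applies to $a$ along the family of cubes $\Q_\eps(x)=x+\eps Y$. (This family shrinks nicely to $x$: each $\Q_\eps(x)$ sits inside $B(x,\eps\sqrt{d})$ with comparable measure, so Lebesgue differentiation is valid for locally integrable functions at a.e.\ $x$.) Fix $\xi\in\dom\Z L(x,\cdot)$. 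By definition of $\Z L(x,\xi)$ one can choose $\eps_n\to 0$ and $\varphi_n\in W^{1,p}_0(\Q_{\eps_n}(x);\RR^m)$ with
\[
\lim_{n\to\infty}\fint_{\Q_{\eps_n}(x)}L(y,\xi+\nabla\varphi_n(y))\,dy=\Z L(x,\xi).
\]

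The key step is to use $t\varphi_n$ as a competitor for $\Z L(x,t\xi)$; since $t\varphi_n\in W^{1,p}_0(\Q_{\eps_n}(x);\RR^m)$ and $t\xi+t\nabla\varphi_n=t(\xi+\nabla\varphi_n)$, I obtain
\[
\Z L(x,t\xi)\le\liminf_{n\to\infty}\fint_{\Q_{\eps_n}(x)}L\bigl(y,t(\xi+\nabla\varphi_n(y))\bigr)\,dy.
\]
Because the averages on the right are finite for $n$ large, $\xi+\nabla\varphi_n(y)\in\dom L(y,\cdot)$ for a.e.\ $y\in\Q_{\eps_n}(x)$, so the pointwise ru-usc inequality applies with $\zeta=\xi+\nabla\varphi_n(y)$ and gives
\[
\fint_{\Q_{\eps_n}(x)}L\bigl(y,t(\xi+\nabla\varphi_n(y))\bigr)\,dy\le\bigl(1+\Delta_L^a(t)\bigr)\fint_{\Q_{\eps_n}(x)}L(y,\xi+\nabla\varphi_n(y))\,dy+\Delta_L^a(t)\fint_{\Q_{\eps_n}(x)}a(y)\,dy.
\]
Passing to the limit along $n$, using the choice of $\varphi_n$ for the first average and Lebesgue differentiation for the second, yields
\[
\Z L(x,t\xi)\le \bigl(1+\Delta_L^a(t)\bigr)\Z L(x,\xi)+\Delta_L^a(t)\,a(x).
\]
Rearranging and dividing by $a(x)+\Z L(x,\xi)>0$ gives
\[
\frac{\Z L(x,t\xi)-\Z L(x,\xi)}{a(x)+\Z L(x,\xi)}\le\Delta_L^a(t),
\]
so taking essential supremum over $x$ and supremum over $\xi\in\dom\Z L(x,\cdot)$ produces the claimed inequality $\Delta_{\Z L}^a(t)\le\Delta_L^a(t)$.

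The only genuinely delicate point is the bookkeeping of null sets: the ru-usc inequality holds outside a null set $N_t$ depending on $t$, and Lebesgue differentiation for $a$ holds outside another null set $N_a$; both must avoid $x$. For the final conclusion only values of $t$ near $1$ matter, so taking a countable cofinal sequence $t_k\to 1$ and unioning $N_{t_k}\cup N_a$ gives a single null set outside which all estimates are valid. No other obstacle is expected.
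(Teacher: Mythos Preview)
Your proof is correct and follows essentially the same line as the paper's: choose near-optimal $\varphi$ for $\Z L(x,\xi)$, use $t\varphi$ as competitor for $\Z L(x,t\xi)$, apply the pointwise ru-usc inequality under the average, and pass to the limit using Lebesgue differentiation for $a$. Your final paragraph slightly over-worries: the null set $N_t$ on which the ru-usc bound may fail pertains to the integration variable $y$, not to the fixed center $x$, so it is absorbed by the average and no countable-$t$ reduction is needed.
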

\begin{proof}
Indeed, fix $x\in\Omega$ a Lebesgue point for the function $a\in L^1(\Omega)$ which appears in the definition of the ru-usc for $L$. Fix $\xi\in \dom\Z L(x,\cdot)$ and choose a sequence $\{\varphi_\eps\}_\eps\subset W^{1,p}_0(\Q_\eps(x);\RR^m)$ satisfying for every $\eps\ssup 0$
\begin{align*}
\eps+\Z L(x,\xi)\ge\fint_{\Q_\eps(x)} L(y,\xi+\nabla \varphi_\eps)dy.
\end{align*}
Fix $\eps\ssup 0$. Then $\xi+\nabla\varphi_\eps(y)\in\dom L(y,\cdot)$ a.e. in $\Q_\eps(x)$. We have for every $t\in]0,1[$
\begin{align*}
&\fint_{\Q_\eps(x)} L(y,t\xi+\nabla \varphi_\eps)dy-\Z L(x,\xi)\\
&\le\fint_{\Q_\eps(x)} L(y,t\xi+\nabla \varphi_\eps)dy-\fint_{\Q_\eps(x)} L(y,\xi+\nabla \varphi_\eps)dy+\eps\\
&\le \Delta_L(t)\left(\fint_{\Q_\eps(x)}a(y)+L(y,\xi+\nabla\varphi_\eps)dy \right)+\eps\\
&\le \Delta_L(t)\left(\fint_{\Q_\eps(x)}a(y)dy+\eps+\Z L(x,\xi) \right)+\eps.
\end{align*}
Taking the infimum over all $\varphi\in W^{1,p}_0(\Q_\eps(x);\RR^m)$ and passing to the limit $\eps\to 0$ we obtain
$\Delta_{\Z L}^a(t)\le \Delta_L^a(t)$ for all $t\in ]0,1[$, and the proof is complete.
\end{proof}

\medskip

Define $\widehat{L}:M\times\MM^{m\times d}\to[0,\infty]$ by 
\begin{align*}
\widehat{L}(x,\xi):=
\liminf_{t\to 1}L(x,t\xi).
\end{align*}

The proof of the following result is similar to the proof of Lemma~\ref{Pre-Extension-Result-for-ru-usc-Functionals} and Lemma ~\ref{hatjruusc}.
\begin{lemma}\label{ruuscLimit} If $L$ is ru-usc and for a.a. $x\in M$ the effective domain $\dom L(x,\cdot)$ is star shaped with respect to $0$, i.e., $t\overline{\dom L(x,\cdot)}\subset \dom L(x,\cdot)$ for all $t\in ]0,1[$ then $\widehat L$ is ru-usc and
\[
\widehat L(x,\xi)=\lim_{t\to 1}L(x,t\xi).
\]
\end{lemma}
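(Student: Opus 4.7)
The plan is to transpose the abstract proofs of Lemma~\ref{Pre-Extension-Result-for-ru-usc-Functionals} and Lemma~\ref{hatjruusc} from the topological vector space setting to the pointwise setting on $\MM^{m\times d}$: the star-shapedness hypothesis plays the role of \eqref{Homothecie-Assumption}, and the pointwise inequality
\[
L(x,t\xi)-L(x,\xi)\le\Delta_L^a(t)\bigl(a(x)+L(x,\xi)\bigr),\qquad \xi\in\dom L(x,\cdot),
\]
which for each $t\in]0,1[$ holds for a.e.\ $x\in M$ by definition of the essential supremum in $\Delta_L^a$, replaces the defining inequality of $\Delta_{J,D}^a$. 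To legitimize pointwise use of this inequality for a variable ratio, I would fix once and for all a countable dense subset $T\subset]0,1[$ and a common null set $N_0\subset M$ outside of which the inequality holds simultaneously for every $t\in T$ and every $\xi\in\dom L(x,\cdot)$.

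For the identity $\widehat L(x,\xi)=\lim_{t\to1^-}L(x,t\xi)$, fix $x\notin N_0$ where the star-shapedness also holds and assume $\widehat L(x,\xi)<\infty$ (the other case being trivial). Then some $t_n\to 1^-$ gives $t_n\xi\in\dom L(x,\cdot)$, hence $\xi\in\overline{\dom L(x,\cdot)}$ and $s\xi\in\dom L(x,\cdot)$ for all $s\in]0,1[$ by star-shapedness. Following Lemma~\ref{Pre-Extension-Result-for-ru-usc-Functionals}, I would pick $\{t_n\},\{s_n\}\subset]0,1[$ with $s_n\ge t_n$, $t_n/s_n\in T$ converging to $1^-$, $L(x,t_n\xi)\to\limsup_{t\to 1^-}L(x,t\xi)$ and $L(x,s_n\xi)\to\widehat L(x,\xi)$; writing $t_n\xi=(t_n/s_n)(s_n\xi)$ and applying the ru-usc inequality at the point $s_n\xi\in\dom L(x,\cdot)$ with ratio $t_n/s_n\in T$, then letting $n\to\infty$, produces $\limsup_{t\to 1^-}L(x,t\xi)\le\widehat L(x,\xi)$, whence the claimed equality.

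For the ru-usc property of $\widehat L$, I would imitate Lemma~\ref{hatjruusc} and prove $\Delta_{\widehat L}^a(t)\le\Delta_L^a(t)$ for every $t\in]0,1[$. Fix such a $t$ and an $x$ outside $N_0$ and the null set attached to the particular ratio $t$. For $\xi\in\dom\widehat L(x,\cdot)$, the argument of the preceding paragraph yields $\xi\in\overline{\dom L(x,\cdot)}$, so star-shapedness gives $t\xi\in\dom L(x,\cdot)$ and the first step applied to both $\xi$ and $t\xi$ provides $\widehat L(x,\xi)=\lim_{s\to 1^-}L(x,s\xi)$ and $\widehat L(x,t\xi)=\lim_{s\to 1^-}L(x,ts\xi)$. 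Applying the ru-usc inequality at $s\xi\in\dom L(x,\cdot)$ with ratio $t$,
\[
L(x,ts\xi)-L(x,s\xi)\le\Delta_L^a(t)\bigl(a(x)+L(x,s\xi)\bigr),
\]
and passing $s\to 1^-$ on both sides, then dividing by $a(x)+\widehat L(x,\xi)>0$ and taking suprema in $\xi$ and essential supremum in $x$, yields $\Delta_{\widehat L}^a(t)\le\Delta_L^a(t)$. Letting $t\to 1^-$ inherits the ru-usc property from $L$ to $\widehat L$.

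The main obstacle is bookkeeping rather than conceptual: the ru-usc inequality is only an essential-supremum statement in $x$, so its pointwise use at the many ratios appearing in the $\limsup/\liminf$ argument requires a common null set, handled by the countable-dense subset $T$; and the sequences must be arranged with $s_n\ge t_n$ so that $t_n/s_n$ stays in $[0,1]$ where $\Delta_L^a$ is defined, which is arranged by first choosing $t_n$ realizing the $\limsup$ and then choosing $s_n\in[t_n,1[$ close enough to $1$ to realize the $\liminf$.
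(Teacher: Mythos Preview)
Your proposal follows exactly the route the paper indicates---the paper's entire proof is the sentence ``The proof of the following result is similar to the proof of Lemma~\ref{Pre-Extension-Result-for-ru-usc-Functionals} and Lemma~\ref{hatjruusc}''---and you go further by flagging the essential-supremum issue (the null set in $x$ attached to each ratio $t$) and proposing the countable-dense-$T$ device to obtain a common exceptional set. This is the right instinct, and the second half (showing $\Delta_{\widehat L}^a(t)\le\Delta_L^a(t)$ for fixed $t$) goes through exactly as you describe.

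There is one loose end in the limit part. You require simultaneously that $t_n$ realize the $\limsup$, that $s_n$ realize the $\liminf$, and that $t_n/s_n\in T$. Once $t_n$ is fixed, the constraint $t_n/s_n\in T$ forces $s_n$ into the set $\{t_n/\tau:\tau\in T\cap(t_n,1)\}$, which is merely dense in $(t_n,1)$; since $s\mapsto L(x,s\xi)$ carries no continuity, density does not guarantee you can pick $s_n$ from this set with $L(x,s_n\xi)\to\widehat L(x,\xi)$ (the set $\{s:L(x,s\xi)<\widehat L(x,\xi)+\epsilon\}$ could in principle avoid the dense set). The paper's one-line reference to Lemmas~\ref{Pre-Extension-Result-for-ru-usc-Functionals}--\ref{hatjruusc} does not address this point either, so you are not behind the source; but to make the transposition airtight you would need an extra argument---for instance a Fubini-type step exploiting joint measurability of $L$ to upgrade ``for each $t$, a.e.\ $x$'' to ``for a.e.\ $x$, a.e.\ $t$'' and then iterate the inequality along a suitable product of ratios.
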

We can now state the analogue of Theorem~\ref{Extension-Result-for-ru-usc-Functionals}.
\begin{theorem}\label{Extension-Result-for-ru-usc-Functions}
Assume $L$ is ru-usc. If for every $x\in M$,
\begin{equation*}
t\overline{\dom L(x,\cdot)}\subset{\rm int}(\dom L(x,\cdot))\hbox{ for all }t\in]0,1[
\end{equation*}
and $L(x,\cdot)$ is lsc on ${\rm int}(\dom L(x,\cdot))$, 
then{\rm:}
\begin{itemize}
\item[{(i)}] $
\widehat{L}(x,\xi)=\left\{
\begin{array}{ll}
L(x,\xi)&\hbox{if }\xi\in{\rm int}(\dom L(x,\cdot))\\
\lim\limits_{t\to 1}L(x,t\xi)&\hbox{if }\xi\in\partial\dom L(x,\cdot)\\
\infty&\hbox{otherwise{\rm;}}
\end{array}
\right.
$
\item[{(ii)}] for every $x\in M$, $\widehat{L}(x,\cdot)$ is the lsc envelope of $L(x,\cdot)$.
\end{itemize}
\end{theorem}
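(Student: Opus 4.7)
The plan is to deduce this theorem from the functional version Theorem~\ref{Extension-Result-for-ru-usc-Functionals}, applied pointwise for each fixed $x\in M$. Set $X=\MM^{m\times d}$ with $\tau$ the usual norm topology, $J:=L(x,\cdot)$, $D:=\inte(\dom L(x,\cdot))$, and use the scalar $a(x)>0$ in place of the constant $a$. The star-shaped property $t\overline{D}^\tau\subset D$ follows from $t\overline{\dom L(x,\cdot)}\subset\inte(\dom L(x,\cdot))$ together with $\overline{D}^\tau\subset\overline{\dom L(x,\cdot)}$; letting $t\to 1$ in the same inclusion also yields $\overline{\dom L(x,\cdot)}\subset\overline{D}^\tau$, so $\overline{D}^\tau=\overline{\dom L(x,\cdot)}$, and in particular $\overline{D}^\tau\setminus D=\partial\dom L(x,\cdot)$. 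The ru-usc of $J$ on $D$ is inherited from the ru-usc of $L$ (since $\Delta^{a(x)}_{J,D}(t)\le\Delta^{a}_{L}(t)$), and the required lsc of $J$ on $D$ is an explicit hypothesis of the theorem.

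For part (i), Theorem~\ref{Extension-Result-for-ru-usc-Functionals}(i) immediately yields $\widehat L(x,\xi)=L(x,\xi)$ for $\xi\in\inte(\dom L(x,\cdot))$ and $\widehat L(x,\xi)=\lim_{t\to 1}L(x,t\xi)$ for $\xi\in\partial\dom L(x,\cdot)$. For the remaining case $\xi\notin\overline{\dom L(x,\cdot)}$, since the complement is open and contains $\xi$, it also contains $t\xi$ for every $t$ sufficiently close to $1$, so $L(x,t\xi)=\infty$ and hence $\widehat L(x,\xi)=\infty$.

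For part (ii), Theorem~\ref{Extension-Result-for-ru-usc-Functionals}(ii) identifies $\widehat L(x,\cdot)=\overline{J}^{D}$ on $\overline{D}^\tau$, where the envelope is computed with sequences constrained to $D$. It remains to identify this with the unrestricted lsc envelope $\overline L(x,\cdot)$. The inequality $\overline L(x,\cdot)\le\overline{J}^{D}$ is trivial. For the reverse, fix $\xi\in\overline{\dom L(x,\cdot)}$ and a sequence $\xi_n\to\xi$ with $\liminf L(x,\xi_n)<\infty$; after extraction I may assume $\xi_n\in\dom L(x,\cdot)$. Choosing $t_n\nearrow 1$ slowly enough that $t_n\xi_n\to\xi$, the star-shaped inclusion forces $t_n\xi_n\in D$, and the ru-usc inequality
\[
L(x,t_n\xi_n)\le L(x,\xi_n)+\Delta^{a}_{L}(t_n)\bigl(a(x)+L(x,\xi_n)\bigr)
\]
combined with $\limsup_n\Delta^{a}_{L}(t_n)\le 0$ gives $\liminf L(x,t_n\xi_n)\le\liminf L(x,\xi_n)$, whence $\overline{J}^{D}(\xi)\le\overline L(x,\xi)$. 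For $\xi\notin\overline{\dom L(x,\cdot)}$ any converging sequence eventually lies outside $\dom L(x,\cdot)$, so both envelopes equal $\infty$.

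The main obstacle is precisely this last comparison in part (ii): radially pushing a near-minimizing sequence at a boundary point of $\dom L(x,\cdot)$ into the interior without increasing its $\liminf$. Ru-usc is used in its full strength here; without it the restricted envelope $\overline{J}^{D}$ could be strictly larger than the full lsc envelope $\overline L(x,\cdot)$, and the equality claimed in (ii) would fail.
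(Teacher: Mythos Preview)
Your proposal is correct and matches the paper's intended route: the paper gives no explicit proof here, merely introducing the result as ``the analogue of Theorem~\ref{Extension-Result-for-ru-usc-Functionals}'', and your pointwise application of that functional theorem with $X=\MM^{m\times d}$, $J=L(x,\cdot)$, $D=\inte(\dom L(x,\cdot))$ is precisely what is implied. The one place where you do more than a straight transcription is in part~(ii), where Theorem~\ref{Extension-Result-for-ru-usc-Functionals}(ii) only yields the \emph{restricted} envelope $\overline{J}^{D}$ while the present statement asserts equality with the \emph{full} lsc envelope of $L(x,\cdot)$; your radial-push argument (replacing $\xi_n\in\dom L(x,\cdot)$ by $t_n\xi_n\in D$ and controlling the increment via ru-usc) correctly closes this gap. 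A minor remark: since $\xi_n\to\xi$, any choice $t_n\to 1$ already gives $t_n\xi_n\to\xi$, so ``slowly enough'' is not needed.
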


\section{Proof of Theorem~\ref{main result-ruusc1} and ~\ref{main result-ruusc2}}\label{Proof of Theorem 1}
\subsection{Proof of Theorem~\ref{main result-ruusc1}}
Let $O\in\O(\Omega)$ and $u\in \dom F(\cdot,O)$. If $\Z\W$ is ru-usc and $W^{1,p}$-quasiconvex, then by Lemma~\ref{ZWlowerW} we have
\begin{align*}
\F(u,O)\ge \inf\left\{\liminf_{n\to \infty}\int_O\Z\W(x,\nabla u_n(x))dx:{W}^{1,p}(\Omega;\RR^m)\ni u_n\wto u\right\}.
\end{align*}
Using Proposition~\ref{main proposition} (ii) we obtain
\begin{align}\label{gliminf}
\F(u,O)\ge \int_O \widehat{\Z\W}(x,\nabla u(x))dx.
\end{align}
If $\W$ is ru-usc then \eqref{gliminf} holds by Proposition~\ref{main proposition} (iii).

To prove the reverse inequality, note that $tu\in\dom F(\cdot,O)$ for all $t\in ]0,1[$ since \ref{H3} and \ref{H1}. Using Lemma~\ref{ZWlowerW} and \ref{H3} we have for every $t\in ]0,1[$
\begin{align*}
\Z\W(x,t\nabla u(x))\le \W(x,t\nabla u(x))\le C(1+\W(x,0)+\W(x,\nabla u(x)) \;\mbox{ a.e. in }O.
\end{align*}
Then we consider both Proposition~\ref{main proposition} (i) and Lemma~\ref{ruuscLimit} and apply the Lebesgue dominated theorem we obtain
\begin{align*}
\F(u,O)\le \liminf_{t\to 1^-}\F(tu,O)\le\limsup_{t\to 1^-}\F(tu,O)&\le\limsup_{t\to 1^-}\int_O\Z\W(x,t\nabla u)dx\\&\le\int_O \widehat{\Z\W}(x,\nabla u)dx
\end{align*}
where we used the fact that $\F(\cdot,O)$ is swlsc on $W^{1,p}(\Omega;\RR^m)$, which completes the proof.
\hfill$\blacksquare$
\subsection{Proof of Theorem~\ref{main result-ruusc2}} Fix $O\in\O(\Omega)$. By \ref{H2} and \ref{H3}, $\dom F(\cdot,O)$ is a convex subset of $W^{1,p}(\Omega;\RR^m)$ with $0$ belongs to the interior of $\dom F(\cdot,O)$ with respect to the norm topology of $W^{1,p}(\Omega;\RR^m)$. Hence, by a well-known property of convex set in normed space, we have $t\overline{\dom F(\cdot,O)}^{s}\subset \dom F(\cdot,O)$ for all $t\in [0,1[$, where $\overline{\dom F(\cdot,O)}^{s}$ is the closure of $\dom F(\cdot,O)$ in $W^{1,p}(\Omega;\RR^m)$. Since $\dom F(\cdot,O)$ is a convex set we have $\overline{\dom F(\cdot,O)}^{s}=\overline{\dom F(\cdot,O)}^{w}$ where $\overline{\dom F(\cdot,O)}^{w}$ is the closure of $\dom F(\cdot,O)$ with respect to the weak topology of $W^{1,p}(\Omega;\RR^m)$. We deduce that $\dom F(\cdot,O)$ is weakly star-shaped with respect to $0$, i.e.,
\begin{align}\label{hyp1resultruusc2}
t\overline{\dom F(\cdot,O)}^{w}\subset \dom F(\cdot,O)\mbox{ for all }t\in [0,1[.
\end{align}

We claim that $\F(\cdot,O)$ is ru-usc in $\dom F(\cdot,O)$. Indeed, let $u\in\dom F(\cdot,O)$ and $t\in ]0,1[$. First, by Proposition~\ref{main proposition} (i)
\begin{align*}
\overline{F}(tu,O)\le \int_O \widehat{\Z\W}(x,t\nabla u)dx&\le \int_O \Delta_{\widehat{\Z\W}}^a(t)\left(a(x)+\widehat{\Z\W}(x,\nabla u)\right)+\widehat{\Z\W}(x,\nabla u)dx\\&=\Delta_{\widehat{\Z\W}}^a(t)\left(\vert a\vert_{L^1(O)}+\F(u,O)\right)+\F(u,O).
\end{align*}
It follows that $\Delta_{\F(\cdot,O),\dom F(\cdot,O)}^{\vert a\vert_{L^1(O)}}(t)\le \Delta_{\widehat{\Z\W}}^a(t)$ for all $t\in ]0,1[$. By Lemma~\ref{ruuscLimit} (if $\W$ is ru-usc then combine Lemma~\ref{ZLruusc}, Lemma~\ref{ruuscLimit} and ($\dom\widehat{\Z\W}\subset\overline{\dom\Z\W}$)) $\widehat{\Z\W}$ is ru-usc, it follows that $\F(\cdot,O)$ is ru-usc in $\dom F(\cdot,O)$. Applying Corollary~\ref{Extension-envelope} with $I(u)=\int_O \widehat{\Z\W}(x,\nabla u)dx$, $D=\dom F(\cdot,O)$ and by taking account of \eqref{hyp1resultruusc2}, we obtain
\begin{align}\label{Extension-envelope-formula-2}
\overline{F}(u,O):=\left\{
\begin{array}{ll}
\displaystyle \int_O \widehat{\Z\W}(x,\nabla u)dx &\mbox{ if }u\in\dom F(\cdot,O)     \\
\displaystyle\lim_{t\to1}  \int_O \widehat{\Z\W}(x,t\nabla u)dx&  \mbox{ if }u\in{\dom \overline{F}(\cdot,O)}\setminus\dom F(\cdot,O)    \\
  \infty&   \mbox{ otherwise.}  
\end{array}
\right.
\end{align}
Let $u\in \dom \F(\cdot,O)\ssetminus\dom F(\cdot,O)$. If $I(u)=\infty$ then $\F(u,O)=\infty$, indeed, since\[\liminf_{t\to 1}\liminf_{s\to 1}\Z \W(x,st\xi)\ge \widehat{\Z\W}(x,\xi),\] we have
\begin{align*}
\F(u,O)=\lim_{t\to1}  \int_O \widehat{\Z\W}(x,t\nabla u)dx\ge \int_O \liminf_{t\to 1} \widehat{\Z\W}(x,t\nabla u)dx\ge \int_O \widehat{\Z\W}(x,\nabla u)dx=I(u)
\end{align*}
where we used \eqref{Extension-envelope-formula-2} and Fatou lemma. Assume now that $I(u)\sinf\infty$. On one hand, $\widehat{\Z\W}(\cdot,\nabla u(\cdot))\in L^1(O)$, and on the other hand $\widehat{\Z\W}$ is ru-usc, hence 
\begin{align*}
 \widehat{\Z\W}(x,t\nabla u(x))\le  \widehat{\Z\W}(x,\nabla u(x))+\Delta_{ \widehat{\Z\W}}^a(t)\left(a(x)+\widehat{\Z\W}(x,\nabla u(x))\right)
\end{align*}
for all $t\in ]0,1[$ and $x\in O$. Applying the Lebesgue dominated theorem we finally obtain 
\begin{align*}
\lim_{t\to1}  \int_O \widehat{\Z\W}(x,t\nabla u)dx=\int_O \widehat{\Z\W}(x,\nabla u)dx.
\end{align*}
\hfill$\blacksquare$

\section{Proof of Proposition~\ref{main proposition} {\rm (i)}}\label{Proof of Proposition (i)}
\subsection{Local Dirichlet problems associated to a functional}
For any functional $H:W^{1,p}(\Omega;\RR^m)\times\O(\Omega)\to [0,\infty]$ we set
\begin{align*}
\M_H(u;O):=\inf\left\{H(v;O):v\in u+{W}^{1,p}_0(O;\RR^m)\right\}.
\end{align*}
Note that we can write $\M_H(u;O)=\inf\left\{H(u+\varphi;O):\varphi\in {W}^{1,p}_0(O;\RR^m)\right\}$ also.
For each $\eps\ssup 0$ and each $O\in\O(\Omega)$, denote by $\mathcal{V}_\eps(O)$ the class of all countable family $\{\overline{\Q}_i:=\overline{\Q}_{\rho_i}(x_i)\}_{i\in I}$ of disjointed (pairwise disjoint) closed balls of $O$ with $x_i\in O$ and $\rho_i=\diam(\Q_i)\in ]0,\eps[$ such that $\left\vert O\ssetminus \mathop{\cup}_{i\in I}Q_i\right\vert=0$. Consider $\M^\eps_H(u;\cdot):\O(\Omega)\to [0,\infty]$ given by 
\begin{align*}
\M^\eps_H(u;O):=\inf\left\{\sum_{i\in I}\M_H(u;Q_i): \{\overline{\Q}_i\}_{i\in I}\in \mathcal{V}_\eps(O)\right\},
\end{align*}
and define $\M^\ast_H(u;\cdot):\O(\Omega)\to [0,\infty]$ by
\begin{align*}
\M^\ast_H(u;O):=\sup_{\eps> 0}\M^\eps_H(u;O)=\lim_{\eps\to 0}\M^\eps_H(u;O).
\end{align*}
The set function $\M^\ast_H$ is of the Carath\'eodory construction type (see for instance \cite[2.10]{federer69}), which was introduced by \cite{bouchitte-fonseca-mascarenhas98} and \cite{bellieud-bouchitte00}.
\begin{lemma}\label{mastinquelity} Let $O\in\O(\Omega)$. Assume that $H(u;\cdot)$ is countably subadditive for all $u\in W^{1,p}(O;\RR^m)$. Then for every $u\in {W}^{1,p}(O;\RR^m)$ we have
\begin{align}\label{eq01: localsup}
\M_H(u;O)\le\M^\ast_H(u;O).
\end{align}
\end{lemma}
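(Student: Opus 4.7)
The plan is to produce a competitor $v\in u+W^{1,p}_0(O;\RR^m)$ for $\M_H(u;O)$ by patching together near-minimizers of the local problems $\M_H(u;\Q_i)$ on a Vitali family, and then to use the countable subadditivity of $H(v;\cdot)$ to compare $H(v;O)$ with the sum of $H(v;\Q_i)$. Since it is enough to prove $\M_H(u;O)\le\M^\eps_H(u;O)$ for every $\eps\ssup 0$, one can assume $\M^\eps_H(u;O)\sinf\infty$ (otherwise there is nothing to prove) and pass to the supremum at the end.

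Fix $\eta\ssup 0$ and $\eps\ssup 0$. Choose $\{\overline{\Q}_i\}_{i\in I}\in\V_\eps(O)$ with
\[
\sum_{i\in I}\M_H(u;\Q_i)\le \M^\eps_H(u;O)+\eta,
\]
then pick for each $i\in I$ a near-minimizer $v_i\in u+W^{1,p}_0(\Q_i;\RR^m)$ with $H(v_i;\Q_i)\le \M_H(u;\Q_i)+\delta_i$, the $\delta_i\ssup 0$ being chosen so that $\sum_{i\in I}\delta_i\le\eta$. By the definition of $W^{1,p}_0(\Q_i;\RR^m)$ recalled at the beginning of Sect.~\ref{Main results}, each $v_i-u$ vanishes on $\Omega\ssetminus\Q_i$; since the $\Q_i$ are pairwise disjoint and $\bigl\vert O\ssetminus\bigcup_i\Q_i\bigr\vert=0$, setting
\[
v:=v_i\ \text{on}\ \Q_i,\qquad v:=u\ \text{on}\ \Omega\ssetminus\bigcup_i\Q_i
\]
defines a function on $\Omega$ unambiguously, with $v-u$ equal to the disjoint sum of the $v_i-u$.

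Once $v\in u+W^{1,p}_0(O;\RR^m)$ is secured, apply countable subadditivity of $H(v;\cdot)$ and the locality of $H$ (so that $H(v;\Q_i)=H(v_i;\Q_i)$, since $v=v_i$ a.e.\ on $\Q_i$) to get
\[
\M_H(u;O)\le H(v;O)\le\sum_{i\in I}H(v;\Q_i)=\sum_{i\in I}H(v_i;\Q_i)\le \M^\eps_H(u;O)+2\eta.
\]
Letting $\eta\to 0$ and then taking the supremum over $\eps$ yields \eqref{eq01: localsup}.

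The main obstacle is the patching step: showing that the function $v-u$, defined as the disjoint sum of the zero-trace perturbations $v_i-u$ over countably many cubes, actually lies in $W^{1,p}_0(O;\RR^m)$. This requires $\sum_{i\in I}\int_{\Q_i}\vert\nabla(v_i-u)\vert^p dx\sinf\infty$, which does not follow from abstract countable subadditivity alone; in the paper's setting it is supplied by the $p$-coercivity assumption \ref{H0} (or \ref{H0prime} when $p=\infty$), which turns the uniform bound $\sum_i H(v_i;\Q_i)\le \M^\eps_H(u;O)+\eta\sinf\infty$ into the required $W^{1,p}$ bound on $v-u$. The locality of $H$ used in the identification $H(v;\Q_i)=H(v_i;\Q_i)$ is immediate for the integral functionals $F$, $\F$, $\Z F$ to which the lemma will be applied, since they depend on $v$ only through $\nabla v$ on $\Q_i$.
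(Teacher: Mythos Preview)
Your approach is essentially the same as the paper's: choose a near-optimal Vitali family, pick near-minimizers on each cube, patch them into a single competitor, and invoke countable subadditivity. The paper uses a single parameter $\eps$ for both the scale and the tolerance (taking $\delta_i=\eps/2^{i+1}$) while you separate them into $\eps$ and $\eta$, but this is cosmetic.

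You are right to flag the two implicit hypotheses---locality of $H$ in its first argument, and $W^{1,p}$-membership of the infinite patch $v-u$---which the paper's proof uses without comment: it simply writes $\varphi_\eps:=\sum_{i}\varphi_\eps^i\mathbb{I}_{\Q_i}\in W^{1,p}_0(O;\RR^m)$ and passes from $H(u+\varphi_\eps;\Q_i)$ to $H(u+\varphi_\eps^i;\Q_i)$ as if both were automatic. Your observation that these are supplied, in the only application (namely $H=F$), by \ref{H0}/\ref{H0prime} and by the integral form of $F$ is exactly how the paper resolves them in practice, even though the abstract statement of the lemma does not record these hypotheses.
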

\begin{proof} Fix $\eps>0$. Choose $\{\overline{\Q}_i\}_{i\ge 1}\in \mathcal{V}_\eps(O)$ such that
\begin{align}\label{eq1:localin}
 \sum_{i\ge 1} \M_H(u;\Q_i)\le \frac\eps2+\M_H^\ast(u;O).
\end{align}
For each $i\ge 1$ there exists $\varphi_\eps^i\in {W}^{1,p}_0(\Q_i;\RR^m)$ such that
\begin{align}\label{eq2:localin}
 H(u+\varphi_\eps^i;\Q_i)\le \frac{\eps}{2^{i+1}}+\M_H(u;\Q_i).
\end{align}
Set $\varphi_\eps:=\sum_{i\ge 1}\varphi_\eps^i\mathbb{I}_{\Q_i}\in  {W}^{1,p}_0(O;\RR^m)$. Using the countable subadditivity of $H(u;\cdot)$, \eqref{eq2:localin}, and \eqref{eq1:localin} we have
\begin{align*}
\M_H(u;O)\le H(u+\varphi_\eps;O)\le \sum_{i\ge 1} H(u+\varphi_\eps^i;\Q_i)&\le \frac\eps2+\sum_{i\ge 1} \M_H(u;\Q_i)\\
&\le \eps+\M^\ast_H(u;O),
\end{align*}
we obtain \eqref{eq01: localsup} by letting $\eps\to 0$.
\end{proof}
By \cite[Prop. 2.1., p. 81]{bellieud-bouchitte00}, we have the following result (which is needed for the proof of Lemma~\ref{localdirichlet}).
\begin{lemma}\label{mast:measure}Let $u\in W^{1,p}(\Omega;\RR^m)$. If there exists a finite Radon measure $\mu_u$ on $\Omega$ such that for every cube $\Q\in\O(\Omega)$
\begin{align*}
\M_H(u;\Q)\le \mu_u(\Q),
\end{align*}
then $\M^\ast_H(u;\cdot)$ can be extended to a Radon measure $\lambda_u$ on $\Omega$ satisfying $0\le\lambda_u\le \mu_u$.
\end{lemma}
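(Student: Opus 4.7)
The strategy is to construct a Borel outer measure extending $\M^\ast_H(u;\cdot)$ via a Carathéodory-type construction, and then to compare it with $\mu_u$. Concretely, for each subset $E\subset\Omega$ and each $\eps\ssup 0$ I would set
\begin{align*}
\lambda^\eps_u(E):=\inf\left\{\sum_{i\in I}\M_H(u;Q_i): E\subset \textstyle\bigcup_{i\in I}\overline{Q}_i,\ \{Q_i\}_{i\in I}\text{ disjoint cubes},\ \diam(Q_i)\sinf\eps\right\}
\end{align*}
and $\lambda^\ast_u(E):=\sup_{\eps\ssup 0}\lambda^\eps_u(E)$. Standard Carathéodory-type arguments (cf.\ \cite[2.10]{federer69}) then show that $\lambda^\ast_u$ is an outer measure on $\Omega$.

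Next, I would verify that $\lambda^\ast_u$ is a metric outer measure: whenever $\dist(A,B)\ssup 0$, any admissible family for $\lambda^\eps_u(A\cup B)$ with $\eps\sinf\dist(A,B)/2$ splits into two disjoint admissible subfamilies covering $A$ and $B$ respectively, which yields superadditivity. Carathéodory's criterion then ensures that every Borel subset of $\Omega$ is $\lambda^\ast_u$-measurable, so $\lambda_u:=\lambda^\ast_u|_{\B(\Omega)}$ is a Borel measure. The bound $\lambda_u\le\mu_u$ follows directly from the hypothesis: for any open $O$ and any $\{Q_i\}_{i\in I}\in\V_\eps(O)$,
\begin{align*}
\sum_{i\in I}\M_H(u;Q_i)\le\sum_{i\in I}\mu_u(Q_i)\le\mu_u(O),
\end{align*}
by assumption and disjointness of the $Q_i$, so taking the infimum and then the supremum in $\eps$ gives $\M^\ast_H(u;O)\le\mu_u(O)$ for every open $O$. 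Outer regularity of $\mu_u$ extends the inequality to $\lambda_u\le\mu_u$ on all Borel sets. Since $\mu_u$ is finite and Radon, so is $\lambda_u$.

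I expect the main obstacle to be certifying that $\lambda_u(O)=\M^\ast_H(u;O)$ for every open $O$, i.e., that the covering-based construction $\lambda^\ast_u$ agrees with the partition-based definition $\M^\ast_H(u;\cdot)$ on open sets. This requires a Vitali-type argument: any covering of $O$ by small disjoint cubes can be refined into an element of $\V_\eps(O)$ without increasing the total $\M_H$-mass, and conversely each element of $\V_\eps(O)$ is essentially a covering of $O$ up to a Lebesgue-null remainder whose contribution vanishes as $\eps\to 0$. Once these geometric points are in place, the remainder of the proof reduces to a direct invocation of the standard measure-extension machinery, with the domination by $\mu_u$ built in from the start.
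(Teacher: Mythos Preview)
The paper does not actually prove this lemma: it simply records it as a consequence of \cite[Prop.~2.1, p.~81]{bellieud-bouchitte00}. So there is no in-paper proof to compare against, and your proposal is an attempt to supply what the paper outsources.

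Your overall plan---build a Carath\'eodory-type outer measure, check it is metric, restrict to Borel sets, and compare with $\mu_u$---is the natural one, and your identification of the main obstacle (matching the covering-based $\lambda^\ast_u$ with the partition-based $\M^\ast_H(u;\cdot)$ on open sets) is exactly right. There is, however, a genuine gap in the first step. The ``standard Carath\'eodory-type arguments'' of \cite[2.10]{federer69} produce an outer measure because the admissible covering families are closed under countable unions: given near-optimal covers of $E_n$, their union is an admissible cover of $\bigcup_n E_n$, and subadditivity follows. Your definition of $\lambda^\eps_u$ requires the cubes to be \emph{pairwise disjoint}, and this closure property fails---merging two disjoint families need not give a disjoint family. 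So subadditivity of $\lambda^\ast_u$ is not free, and the paper itself acknowledges this difficulty for the closely related set function $\M^\sharp$ in Section~\ref{Proof of Theorem exist-ZL}: ``Although we do not know whether it is an outer measure we have the following result.'' You must either drop the disjointness requirement in the definition of $\lambda^\eps_u$ (and then work harder on the comparison with $\M^\ast_H$, since an arbitrary cover is far from a Vitali partition), or bypass the outer-measure route altogether.

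A cleaner strategy, and the one underlying \cite{bellieud-bouchitte00}, is to work directly with $\M^\ast_H(u;\cdot)$ on $\O(\Omega)$ and invoke the De Giorgi--Letta criterion: show that $O\mapsto\M^\ast_H(u;O)$ is increasing, subadditive, superadditive on disjoint open sets, and inner regular. The domination $\M_H(u;\Q)\le\mu_u(\Q)$ is exactly what makes each of these verifications tractable---in particular it lets you control the ``remainder'' when refining or truncating Vitali families, which is precisely the geometric point you flag at the end of your sketch. Once De Giorgi--Letta applies, the extension $\lambda_u$ and the bound $\lambda_u\le\mu_u$ come out simultaneously.
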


The proof of the upper bound will be divided into four steps.

\subsection{Step 1: ${\F(u;O)\le\M^\ast_F(u;O)}$ for all ${(u,O)\in W^{1,p}(\Omega;\RR^m)\times\O(\Omega)}$.}\label{subsect1} Fix $u\in W^{1,p}(\Omega;\RR^m)$. Without loss of generality we assume that $\M^\ast_F(u;O)\sinf\infty$. Fix $\eps\in ]0,1[$. Choose $\{\overline{\Q}_i\}_{i\in I}\in\mathcal{V}_\eps(O)$ such that
\begin{align}\label{eq1: step1}
\sum_{i\in I}\M_F(u;\Q_i)\le \M^\eps_F(u;O)+\frac\eps2\le \M^\ast_F(u;O)+\frac\eps2.
\end{align}
Given any $i\in I$ there exists $v_i\in u+{W}^{1,p}_0(\Q_i;\RR^m)$ such that 
\begin{align}\label{eq2: step1}
F(v_i;\Q_i)\le \M^\eps_F(u;\Q_i)+\frac\eps2 \frac{\vert \Q_i\vert}{\vert O\vert}
\end{align}
by definition of $\M_F(u;\Q_i)$.
Define $u_\eps\in u+W^{1,p}_0(O;\RR^m)$ by $u_\eps:=\sum_{i\in I} v_i\mathbb{I}_{\Q_i}+u\mathbb{I}_{\Omega\setminus\mathop{\cup}_{i\in I}\Q_i}$. From \eqref{eq1: step1} and \eqref{eq2: step1} we have that
\begin{align}\label{eq4: step1}
F(u_\eps;O)\le \M^\eps_F(u;O)+\eps.
\end{align}
In the case $p\in]d,\infty[$, from the $p$-coercivity of $f$, \eqref{eq1: step1} and \eqref{eq2: step1}, we deduce
\begin{align}\label{eq3: step1}
\sup_{\eps>0}\int_O \vert\nabla u_\eps\vert^pdx\le \frac{1}{c}\left(\M^\ast_F(u;O)+1\right).
\end{align}
By Poincar\'e inequality there exists $K\ssup 0$ depending only on $p$ and $d$ such that for each $v_i\in u+{W}^{1,p}_0(\Q_i;\RR^m)$
\begin{align*}
\int_{\Q_i}\vert v_i-u\vert^p dx\le K\eps^p\int_{\Q_i}\vert \nabla v_i-\nabla u\vert^pdx,
\end{align*}
since $\diam(\Q_i)\sinf\eps$. By summing on $i\in I$ and using \eqref{eq3: step1} and we obtain
\begin{align*}
\int_O\vert u_\eps-u\vert^p dx&\le 2^{p-1}K\eps^p\left(\int_{O}\vert \nabla u_\eps\vert^p dx+\int_O \vert\nabla u\vert^pdx\right)\\
&\le 2^{p-1}K\eps^p\left(\frac{1}{c}\left(\M^\ast_F(u;O)+1\right)+\int_O \vert\nabla u\vert^pdx\right)
\end{align*}
which shows that $u_\eps\to u$ in $L^p(O;\RR^m)$ as $\eps\to 0$. In the case where $p=\infty$ we have 
\begin{align}\label{eq3: step1prime}
\Vert\nabla u_\eps\Vert_{L^\infty(O;\RR^m)}\le R_0
\end{align} 
since  \eqref{eq4: step1}. With similar reasoning we obtain $u_\eps\to u$ in $L^\infty(O;\RR^m)$ as $\eps\to 0$.

Therefore by \eqref{eq3: step1} (\eqref{eq3: step1prime} if $p=\infty$), there is a subsequence (not relabeled) such that $u_\eps\wto u$ ($u_\eps\stackrel{\ast}{\wto} u$ if $p=\infty$) as $\eps\to 0$, and then by \eqref{eq4: step1} we have
\begin{align*}
\F(u;O)\le \liminf_{\eps\to 0}\F(u_\eps;O)\le \M^\ast_F(u;O).
\end{align*}\hfill$\blacksquare$
\begin{remark} We note that the previous proof establishes
\begin{align*}
\F(u;O)&\le\inf\left\{\liminf_{\eps\to 0}F(u_\eps;O):u+W^{1,p}_0(O;\RR^m)\ni u_\eps\wto u\mbox{ in }W^{1,p}(\Omega;\RR^m)\right\}\\
&\le \M^\ast_F(u;O).
\end{align*}
\end{remark}
\subsection{Step 2: $\M^\ast_F(u;\cdot)$ is locally equivalent to $\M_F(u;\cdot)$}\label{subsect2}
We are concerned with the proof of the local equivalence of $\M^\ast_F(u;\cdot)$ and $\M_F(u;\cdot)$, this result was established by \cite[Lemma 3.5]{bouchitte-fonseca-mascarenhas98} in the context of relaxation of variational functionals in $BV$, and in a general framework in \cite[Theorem 2.3]{bellieud-bouchitte00}. However, the proof that we propose is inspired by \cite[Proof of Theorem 3.11, p. 380]{acerbi-bouchitte-fonseca03}. Also, note that by Lemma~\ref{eq01: localsup} $\liminf_{\eps\to 0}\frac{\M_F^\ast(u;\Q_\eps(x_0))}{\M_F(u;\Q_\eps(x_0))}\ge 1$.
\begin{lemma}\label{localdirichlet}
If $F(u;O)\sinf\infty$. Then we have 
\[
\displaystyle\lim_{\eps\to 0}\frac{\M^\ast_F(u;\Q_\eps(x_0))}{\eps^d}=\lim_{\eps\to 0}\frac{\M_F(u;\Q_\eps(x_0))}{\eps^d} \mbox{ a.e. in }O.
\]
\end{lemma}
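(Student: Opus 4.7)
The plan is to treat Lemma~\ref{localdirichlet} as a pointwise differentiation statement. Since $F(u;O)\sinf\infty$, the cube set function $\Q\mapsto F(u;\Q)=\int_\Q \W(y,\nabla u(y))dy$ defines a finite Radon measure $\mu_u$ on $O$, and choosing $v=u$ as competitor gives $\M_F(u;\Q)\le F(u;\Q)=\mu_u(\Q)$ for every cube $\Q$. Lemma~\ref{mast:measure} therefore applies and $\M^\ast_F(u;\cdot)$ extends to a Radon measure $\lambda_u$ on $O$ with $0\le\lambda_u\le\mu_u$; in particular $\lambda_u\ll \L^d$. Since cubes form a bounded-eccentricity differentiation basis, the Lebesgue--Besicovitch theorem yields
\[
\phi(x_0):=\lim_{\eps\to 0}\frac{\lambda_u(\Q_\eps(x_0))}{\eps^d}=\lim_{\eps\to 0}\frac{\M^\ast_F(u;\Q_\eps(x_0))}{\eps^d}
\]
for a.e.\ $x_0\in O$, so the first limit in the statement already exists. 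Combined with Lemma~\ref{mastinquelity} this gives the easy half $\limsup_{\eps\to 0}\M_F(u;\Q_\eps(x_0))/\eps^d\le\phi(x_0)$ a.e., and the task reduces to proving $\liminf_{\eps\to 0}\M_F(u;\Q_\eps(x_0))/\eps^d\ge\phi(x_0)$ a.e.

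I would establish this lower bound by contradiction. Assume it fails; then there exist rationals $\alpha\sinf\beta$ for which
\[
E:=\left\{x_0\in O:\liminf_{\eps\to 0}\frac{\M_F(u;\Q_\eps(x_0))}{\eps^d}\sinf\alpha\,\mbox{ and }\,\phi(x_0)\ssup\beta\right\}
\]
has positive Lebesgue measure. Pick $x_0\in E$ which is simultaneously a Lebesgue density point of $E$, a Lebesgue point of $y\mapsto \W(y,\nabla u(y))$, and a Lebesgue point of $\phi$. Then for every $\delta\ssup 0$ and all sufficiently small $R\ssup 0$ one has
$|\Q_R(x_0)\cap E|\ge(1-\delta)|\Q_R(x_0)|$, $\lambda_u(\Q_R(x_0))\ge(\beta-\delta)|\Q_R(x_0)|$, and $\mu_u(\Q_R(x_0)\setminus E)\le C\delta|\Q_R(x_0)|$ for a constant $C$ depending only on $\W(x_0,\nabla u(x_0))$.

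Fix $\eta\ssup 0$. For each $y\in E\cap \Q_R(x_0)$ the cubes $\Q_r(y)\subset \Q_R(x_0)$ with $\diam \Q_r(y)\sinf\eta$ and $\M_F(u;\Q_r(y))\sinf\alpha r^d$ exist for arbitrarily small $r$ and hence form a Vitali fine cover of $E\cap \Q_R(x_0)$. Extract a countable disjoint subfamily $\{Q^{(j)}\}$ covering $E\cap \Q_R(x_0)$ up to a Lebesgue-null set, and fill the complement $\Q_R(x_0)\setminus\bigcup_j Q^{(j)}$ by disjoint cubes $\{\tilde Q_k\}$ of diameter $\sinf\eta$ via a Whitney-type subdivision of this open set refined at scale $\eta$. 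The combined family lies in $\V_\eta(\Q_R(x_0))$, and using $\M_F(u;Q^{(j)})\sinf\alpha|Q^{(j)}|$ together with the trivial estimate $\M_F(u;\tilde Q_k)\le\mu_u(\tilde Q_k)$,
\[
\M_F^\eta(u;\Q_R(x_0))\le\sum_j \M_F(u;Q^{(j)})+\sum_k \M_F(u;\tilde Q_k)\le\alpha|\Q_R(x_0)|+\mu_u(\Q_R(x_0)\setminus E).
\]
Letting $\eta\to 0$ and invoking the measure extension $\M^\ast_F(u;\cdot)=\lambda_u$ gives $\lambda_u(\Q_R(x_0))\le\alpha|\Q_R(x_0)|+C\delta|\Q_R(x_0)|$. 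Comparing with $\lambda_u(\Q_R(x_0))\ge(\beta-\delta)|\Q_R(x_0)|$ and sending $\delta\to 0$ yields $\beta\le\alpha$, contradicting $\alpha\sinf\beta$.

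The main obstacle is the fill-in step: one must check that $\Q_R(x_0)\setminus\bigcup_j \overline{Q^{(j)}}$ really admits a partition (up to a Lebesgue-null set) into cubes of diameter $\sinf\eta$, which a Whitney decomposition of the open complement truncated at scale $\eta$ provides, and then to juggle the $\delta,\eta,R$ parameters carefully so that the chain of inequalities closes. All remaining ingredients reduce to Lemma~\ref{mastinquelity}, Lemma~\ref{mast:measure}, and classical Lebesgue--Besicovitch differentiation for Radon measures.
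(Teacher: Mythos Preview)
Your setup (using Lemma~\ref{mast:measure} to get $\lambda_u$, then Lebesgue--Besicovitch for the existence of $\phi$, then Lemma~\ref{mastinquelity} for the easy inequality) matches the paper exactly. For the hard inequality $\liminf_{\eps\to 0}\M_F/\eps^d \ge \phi$ the paper takes a different, more global route: it fixes $\theta>0$, lets $\mathcal{N}_\theta$ be the set of points admitting arbitrarily small cubes $\Q$ with $\M^\ast_F(u;\Q) > \M_F(u;\Q) + \theta|\Q|$, and shows $|\mathcal{N}_\theta| = 0$ directly. It Vitali-covers $\mathcal{N}_\theta$ by such cubes $\{\Q_i\}$, completes finitely many of them to an element of $\mathcal{V}_\delta(O)$, and uses additivity of the Radon measure $\M^\ast_F(u;\cdot)$ on all of $O$ together with the gain $\theta|\Q_i|$ on each bad cube to force $\M^\ast_F(u;O) \ge \M^\delta_F(u;O) + \theta|V_j| - \delta$; letting $\delta \to 0$ kills $|V_j|$. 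Your localized density-point contradiction is a legitimate alternative and arguably closer to textbook differentiation-of-measures proofs; the paper's version, on the other hand, avoids having to select Lebesgue points of $\W(\cdot,\nabla u)$ and of $\phi$, and sidesteps any measurability discussion for your bad set $E$.

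One correction to your fill-in step: the complement of a \emph{countable} union $\bigcup_j\overline{Q^{(j)}}$ of closed Vitali cubes is only $G_\delta$, not open, so ``Whitney decomposition of the open complement'' does not apply as stated. The standard remedy (which is exactly what the paper does on $O$) is to keep only finitely many Vitali cubes $\overline{Q^{(1)}},\dots,\overline{Q^{(N)}}$, so that $\Q_R(x_0)\setminus\bigcup_{j\le N}\overline{Q^{(j)}}$ is genuinely open and can be partitioned into cubes of diameter $<\eta$; this yields
\[
\M^\eta_F(u;\Q_R(x_0)) \le \alpha|\Q_R(x_0)| + \mu_u\Big(\Q_R(x_0)\setminus\!\!\bigcup_{j\le N} Q^{(j)}\Big),
\]
and letting $N\to\infty$ (using $\mu_u\ll\L^d$ and that the $Q^{(j)}$ cover $E\cap\Q_R(x_0)$ up to a null set) recovers your bound with $\mu_u(\Q_R(x_0)\setminus E)$. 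With this adjustment your argument goes through.
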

\begin{proof}
Let $u\in W^{1,p}(\Omega;\RR^m)$ be such that $F(u;O)\sinf\infty$. Then for each $U\in\O(O)$
\begin{align*}
\M_F(u;U)\le \int_U \W(x,\nabla u(x))dx\sinf\infty,
\end{align*}
so by using Lemma~\ref{mast:measure} with $\mu_u:=\W(\cdot,\nabla u(\cdot))dx\lfloor_{O}$, $\M^\ast_F(u;\cdot)$ is the trace of a Radon measure $\lambda_u$ on $O$ satisfying $0\le\lambda_u\le \mu_u$. Since $\mu_u$ is absolutely continuous with respect to $dx\lfloor_{O}$ the Lebesgue measure on $O$, the limit $\lim_{\eps\to 0}\frac{\lambda_u(\Q_\eps(x_0))}{\eps^d}$ exists for a.a. $x_0\in O$ as the Radon-Nikodym derivative of $\lambda_u$ with respect to $dx\lfloor_{O}$. Moreover, by Lemma~\ref{mastinquelity}, we have
\begin{align*}
\lim_{\eps\to 0}\frac{\M^\ast_F(u;\Q_\eps(x_0))}{\eps^d}\ge \limsup_{\eps\to 0}\frac{\M_F(u;\Q_\eps(x_0))}{\eps^d}\; \mbox{ a.e. in }O.
\end{align*}
It remains to prove that 
\begin{align}\label{eq-1:localsup}
\lim_{\eps\to 0}\frac{\M^\ast_F(u;\Q_\eps(x_0))}{\eps^d}\le \liminf_{\eps\to 0}\frac{\M_F(u;\Q_\eps(x_0))}{\eps^d}\; \mbox{ a.e. in }O.
\end{align}
Fix any $\theta\ssup 0$. Consider the following sets
\begin{align*}
\mathcal{G}_\theta:=&\Big\{ \Q_\eps(x): x\in O,\;\eps >0\;\mbox{ and }\;\M^\ast_F(u;\Q_\eps(x))\ssup \M_F(u;\Q_\eps(x))+\theta\left\vert\Q_\eps(x)\right\vert\Big\},\\
\mathcal{N}_\theta:=&\Big\{x\in O:\forall \delta\ssup0\;\;\exists\eps \in ]0,\delta[\;\;\Q_\eps(x)\in\mathcal{G}_\theta\Big\}.
\end{align*}
It is sufficient to prove that $\mathcal{N}_\theta$ is a negligible set for the Lebesgue measure on $ O$. Indeed, given $x_0\in O\ssetminus\mathcal{N}_\theta$ there exists $\delta_0\ssup 0$ such that $\M^\ast_F(u;\Q_{\eps}(x_0))\le \M_F(u;\Q_{\eps}(x_0))+\theta\left\vert\Q_{\eps}(x)\right\vert$ for all $\eps\in]0,\delta_0[$. Hence 
\begin{align*}
\displaystyle\lim_{\eps\to 0}\frac{\M^\ast_F(u;\Q_\eps(x_0))}{\left\vert\Q_\eps(x)\right\vert}\le\liminf_{\eps\to 0}\frac{\M_F(u;\Q_{\eps}(x_0))}{\left\vert\Q_{\eps}(x)\right\vert}+\theta,
\end{align*}
then we obtain \eqref{eq-1:localsup} by letting $\theta\to 0$.

Fix $\delta\ssup 0$. Consider the set 
\begin{align*}
\mathcal{F}_\delta:=\Big\{\overline{\Q}_\eps(x): x\in\mathcal{N}_\theta,\;\eps \in]0,\delta[\;\mbox{ and }\Q_\eps(x)\in\mathcal{G}_\theta\Big\}.
\end{align*}
Using the definition of $\mathcal{N}_\theta$ we can see that $\inf_{\Q\in\mathcal{F}_\delta}\diam\left({\Q}\right)=0$. By the Vitali covering theorem there exists a disjointed countable subfamily $\{\overline{\Q}_i\}_{i\ge 1}$ of $\mathcal{F}_\delta$ such that
\begin{align}\label{eq1: localsup}
\big\vert\mathcal{N}_\theta\ssetminus \mathop{\cup}_{i\ge 1}\Q_i\big\vert=0.
\end{align}
We have $\mathcal{N}_\theta\subset \mathop{\cup}_{i\ge 1}\Q_i\cup \mathcal{N}_\theta\ssetminus \mathop{\cup}_{i\ge 1}\Q_i$. To prove that $\mathcal{N}_\theta$ is a negligible set is equivalent to prove that $\vert V_j\vert=0$ for all $j\ge 1$ where 
\[
V_j:=\mathop{\cup}_{i=1}^j\Q_i.
\]
Fix $j\ge 1$. Let $\{\Q^\prime_i\}_{i\ge 1}\in\mathcal{V}_\delta\big( O\setminus \mathop{\cup}_{i=1}^j\overline{\Q}_i\big)$ satisfying
\begin{align}\label{eq2: localsup}
\sum_{i\ge 1}\M_F(u;\Q^\prime_i)\le\M^\ast_F\big(u; O\setminus \mathop{\cup}_{i=1}^j\overline{\Q}_i\big)+\delta.
\end{align}
Recalling that $\M^\ast_F(u;\cdot)$ is the trace on $\O( O)$ of a nonnegative finite Radon measure, we see that 
\begin{align*}
\M^\ast_F(u; O)&\ge \M^\ast_F\big(u; O\setminus \mathop{\cup}_{i=1}^{j}\overline{\Q}_i\big)+\M^\ast_F\big(u;V_j\big)\\
&=\M^\ast_F\big(u; O\setminus \mathop{\cup}_{i=1}^{j}\overline{\Q}_i\big)+\sum_{1\le i\le j}\M^\ast_F(u;{\Q_i}).
\end{align*}
Since each $\Q_i\in\mathcal{G}_\theta$, we have by using \eqref{eq2: localsup}
\begin{align*}
\M^\ast_F(u; O)\ge\sum_{i\ge 1}\M_F(u;\Q^\prime_i)-\delta+\sum_{i=1}^{j}\M_F(u;{\Q_i})+\theta\vert V_j\vert.
\end{align*}
It is easy to see that the countable family $\{\Q^\prime_i:i\ge 1\}\cup\{\Q_i:1\le i\le j\}$ belongs to $\mathcal{V}_\delta( O)$, thus
\begin{align*}
\M^\ast_F(u; O)\ge \M^\delta_F(u; O)+\theta\vert V_j\vert-\delta.
\end{align*}
Letting $\delta\to 0$ we have $\M^\delta_F(u; O)\nearrow \M^\ast_F(u; O)$, and so $\vert V_j\vert=0$ since $\theta\ssup 0$.
\end{proof}
\subsection{Step 3: Cut-off technique to substitute $u(\cdot)$ with $u(x_0)+\nabla u(x_0)(\cdot-x_0)$ in $\M_F(u;\cdot)$}\label{subsect3}
Now we use cut-off functions to show that for almost all $x_0\in\Omega$ we can replace $u$ in $\M_F(u;\cdot)$ (locally) with the affine tangent map of $u$ at $x_0$ denoted by $u_{x_0}(\cdot):=u(x_0)+\nabla u(x_0)(\cdot-x_0)$. In the following, we consider $u\in W^{1,p}(\Omega;\RR^m)$ satisfying $tu\in\dom F$ for all $t\in ]0,1[$. 

We claim that for every $t\in ]0,1[$
\begin{align}\label{step3-upper-bound}
\lim_{\eps\to 0}\frac{\M_F(tu;\Q_\eps(x_0))}{\eps^d}\le\Z\W(x_0,t\nabla u(x_0))\;\mbox{ a.e. }x_0\in\Omega.
\end{align}
Fix $t\in ]0,1[$ and consider $\lambda,\alpha\in]0,1[$ such that $\lambda=\frac t\alpha$. Fix $x_0\in\Omega$ such that 
\begin{align}
&\lim_{\eps\to 0}\frac{F(\alpha u;\Q_\eps(x_0))}{\eps^d}=\W(x_0,\alpha\nabla u(x_0))\sinf\infty;\label{eq1:cutubound}\\
&\lim_{\eps\to 0}\frac{F(tu;\Q_\eps(x_0))}{\eps^d}=\W(x_0,t\nabla u(x_0))\sinf\infty;\label{eq2:cutubound}\\
&\maxw(x_0)=\lim_{\eps\to 0}\fint_{\Q_\eps(x_0)} \maxw(x)dx\sinf\infty.\label{eq3:cutubound}\\
&\Z \W(x_0,t\nabla u(x_0))=\lim_{\eps\to 0}\inf_{\varphi\in W^{1,p}_0(\Q_\eps(x_0);\RR^m)}\fint_{\Q_\eps(x_0)}\W(y,t\nabla u(x_0)+\nabla \varphi)dy.\label{limZLx0}
\end{align}
To shorten notation the cube $\Q_\eps(x_0)$ is denoted by $\Q_\eps$ .

Let $\{\eps_n\}_{n}\subset \RR_+^\ast$ be a sequence such that $\eps_n\to 0$ as $n\to \infty$ and
\begin{align*}
\lim_{\eps\to 0}\frac{\M_F(tu;\Q_\eps)}{\eps^d}=\lim_{n\to\infty}\frac{\M_F(tu;\Q_{\eps_n})}{\eps_n^d}.
\end{align*}
Fix $r,s\in ]0,1[$ such that $s\sinf r$. Fix $n\ge 1$. Choose $v_n\in  u_{x_0}+{W}^{1,p}_0(\Q_{s\eps_n};\RR^m)$ such that
\begin{align*}
F\left(tv_{n};\Q_{s\eps_n}\right)\le \M_F\left(t u_{x_0};\Q_{s\eps_n}\right)+\left(\eps_n\right)^{d+1},
\end{align*}
Consider a cut-off function $\phi\in W^{1,\infty}_0(\Q_{\eps_n};[0,1])$ such that $\Vert\nabla \phi\Vert_{L^\infty(\Q_{\eps_n})}\le \frac{4}{(r-s)\eps_n}$ and
\begin{align*}
\phi(x)=\left\{
\begin{array}{ll}
  1   & \mbox{ on }\Q_{s\eps_n}\\
  0   & \mbox{ on }\Q_{\eps_n}\ssetminus \Q_{r\eps_n}
\end{array},
\right.
[0<\phi<1]\subsubset \Q_{r\eps_n}\ssetminus \Q_{{s}\eps_n}.
\end{align*}
Define $w_{n}:=\phi v_{n}+(1-\phi)u\in u+{W}^{1,p}_0(\Q_{\eps_n};\RR^m)$, we have
\begin{align}\label{Eq: local limsup}
\M_F\left(tu;\Q_{\eps_n}\right)
&\le F\left(tv_n;\Q_{s\eps_n}\right)+F\left(tw_{n};\Q_{r\eps_n}\ssetminus \Q_{s\eps_n}\right)+F\left(tu;\Q_{\eps_n}\ssetminus \Q_{r\eps_n}\right)\\
&\le\M_F\left(t u_{x_0};\Q_{s\eps_n}\right)+\eps_n^{d+1}+F\left( tw_n;\Q_{r\eps_n}\ssetminus \Q_{s\eps_n}\right)+F\left(tu;\Q_{\eps_n}\ssetminus \Q_{r\eps_n}\right).\notag
\end{align}
The rest of the proof consists to give estimates from above, as $n\to\infty$, of the last two terms of \eqref{Eq: local limsup} divided by $\eps_n^d$.

By \eqref{eq2:cutubound} we have
\begin{align}\label{eq-1: reduc}
\lim_{n\to \infty}\frac{F\left(tu;\Q_{\eps_n}\ssetminus \Q_{r\eps_n}\right) }{\eps_n^d}
=\lim_{n\to \infty}\frac{1}{\eps_n^d}\int_{\Q_{\eps_n}\ssetminus \Q_{r\eps_n}}\nspace\W(x,t\nabla u)dx
=\left(1-r^d\right)\W(x_0,t\nabla u(x_0)).
\end{align}

By \ref{H3} we have 
\begin{align}\label{eq-2: reduc}
&\frac{F\left(tw_n;\Q_{r\eps_n}\ssetminus \Q_{s\eps_n}\right) }{\eps_n^d}\\
&=  \frac{1}{\eps_n^d}\int_{\Q_{r\eps_n}\ssetminus \Q_{s\eps_n}} \nspace\W\left(x,\lambda(\psi\alpha\nabla u(x_0)+(1-\psi)\alpha\nabla u)+(1-\lambda)\Phi_{n,t}\right)dx\notag\\
&\le C_1  \left(\frac{\left\vert{\Q_{r\eps_n}\ssetminus \Q_{s\eps_n}}\right\vert}{\eps_n^d}+\frac{1}{\eps_n^d}\int_{\Q_{r\eps_n}\ssetminus \Q_{s\eps_n}}\nspace\W\left(x,{\alpha}\nabla u(x_0)\right)dx\right.\notag\\
&\hspace{2cm}+\left. \frac{1}{\eps_n^d}\int_{\Q_{r\eps_n}\ssetminus \Q_{s\eps_n}}\nspace\W\left(x,{\alpha}\nabla u\right)dx+\frac{1}{\eps_n^d}\int_{\Q_{r\eps_n}\ssetminus \Q_{s\eps_n}}\nspace\W\left(x,\Phi_{n,t}\right)dx\right),\notag
\end{align}
where $\Phi_{n,t}:=\frac{t}{1-{\lambda}}\nabla \phi\otimes( u_{x_0}-u)$ and $C_1=C^2+C$. By \eqref{eq1:cutubound}, it holds
\begin{align}\label{eq1: reduc}
\displaystyle \lim_{n\to \infty} \frac{1}{\eps_n^d}\int_{\Q_{r\eps_n}\ssetminus \Q_{s\eps_n}}\nspace\W\left(x,{\alpha}\nabla u\right)dx=(r^d-s^d)f(x_0,\alpha \nabla u(x_0)).
\end{align}
Using \ref{H4} and \eqref{eq1:cutubound} we deduce
\begin{align}\label{eq2: reduc}
\lim_{n\to \infty}\frac{1}{\eps_n^d}\int_{\Q_{r\eps_n}\ssetminus \Q_{s\eps_n}}\nspace\W\left(x,{\alpha}\nabla u(x_0)\right)dx=(r^d-s^d)\a(x_0,\alpha \nabla u(x_0)).
\end{align}
Choose $N_0\ge 1$ such that $\frac{1}{\eps_n}\Vert  u_{x_0}-u\Vert_{L^\infty(\Q_{\eps_n})}\le \frac{(1-\lambda)(r-s)\rho_0}{4t}$ for all $n\ge N_0$. It follows that $\Vert \Phi_{n,t}\Vert_{L^\infty(\Q_{\eps_n};\MM^{md})}\le \rho_0$ for all $n\ge N_0$, and we have by Lemma~\ref{lemma finite integrand}
\begin{align*}
\limsup_{n\to\infty}\frac{1}{\eps_n^d}\int_{\Q_{r\eps_n}\ssetminus \Q_{s\eps_n}}\nspace\W\left(x,\Phi_{n,t}\right)dx\le \limsup_{n\to\infty}\frac{1}{\eps_n^d}\int_{\Q_{r\eps_n}\ssetminus \Q_{s\eps_n}}\nspace \maxw(x) dx.
\end{align*}
Moreover, by \eqref{eq3:cutubound} we have
\begin{align}\label{eq3: reduc}
\limsup_{n\to \infty}\frac{1}{\eps_n^d}\int_{\Q_{r\eps_n}\ssetminus \Q_{s\eps_n}}\nspace \maxw(x) dx=(r^d-s^d)\maxw(x_0).
\end{align}
Passing to the limit $n\to\infty$ by taking account of \eqref{eq-2: reduc}, and the estimates \eqref{eq1: reduc}, \eqref{eq2: reduc} and \eqref{eq3: reduc}, we have
\begin{align*}
&\lim_{n\to\infty}\frac{\M_F\left(tu;\Q_{\eps_n}\right)}{\eps_n^d}\\
&\le s^d\liminf_{n\to\infty}\frac{\M_F\left(t u_{x_0};\Q_{s\eps_n}\right)}{s^d\eps_n^d}+\left(1-r^d\right)\W(x_0,t\nabla u(x_0))\\
&+2C_1(r^d-s^d)\left(1+\W(x_0,\alpha\nabla u(x_0))+\a(x_0,\alpha\nabla u(x_0))+\maxw(x_0)\right).
\end{align*}
Letting $r\to 1$ and $s\to 1$, we find 
\begin{align}\label{end-eq-upper-bound}
\lim_{\eps\to 0}\frac{\M_F\left(tu;\Q_{\eps}\right)}{\eps^d}=\lim_{n\to\infty}\frac{\M_F\left(tu;\Q_{\eps_n}\right)}{\eps_n^d}&\le \liminf_{s\to 1}\liminf_{n\to\infty}\frac{\M_F\left(t u_{x_0};\Q_{s\eps_n}\right)}{(s\eps_n)^d}\\
&\le \limsup_{\eps\to 0}\frac{\M_F\left(t u_{x_0};\Q_{\eps}\right)}{\eps^d}\notag\\&=\Z\W(x_0,t\nabla u(x_0))\notag
\end{align}
where \eqref{limZLx0} is used.

\subsection{Step 4: End of the proof of Proposition~\ref{main proposition} (i)} Using in turn the results of Subsect.~\ref{subsect1},~\ref{subsect2} and~\ref{subsect3} we obtain for every $u\in W^{1,p}(\Omega;\RR^m)$, every $O\in\O(\Omega)$ and every $t\in]0,1[$
\begin{align*}
\F(tu;O)\le \M_F^\ast(tu;O)=\int_O \lim_{\eps\to 0}\frac{\M_F^\ast\left(tu;\Q_{\eps}(x)\right)}{\eps^d}dx&=\int_O\lim_{\eps\to 0}\frac{\M_F\left(tu;\Q_{\eps}(x)\right)}{\eps^d}dx\\
&\le\int_O \Z\W(x,t\nabla u(x))dx.
\end{align*}\hfill$\blacksquare$
\section{Proof of Proposition~\ref{main proposition} {\rm (ii)} and {\rm (iii)}}\label{Proof of Proposition (iii)}
The proof will be divided into two steps. In the first step we will use a localization technique also known as blow-up method introduced by \cite{fonseca-muller92} which consists to reduce the proof of the (global) lower bound to a local lower bound by using measure arguments. The second step consists to prove the local lower bound by using cut-off functions.

In this section we denote by $L$ the integrands $\Z\W$ or$\W$. 
\subsection{Step 1: Localization technique} Let $O\in\O(\Omega)$. Let $u,\{u_n\}_n\subset W^{1,p}(\Omega;\RR^m)$ be such that $u_n\wto u$ in $W^{1,p}(\Omega;\RR^m)$ and 
\begin{align*}
\infty\ssup\L(u;O):=&\inf\left\{\liminf_{n\to \infty}\int_O L(x,\nabla u_n(x))dx: W^{1,p}(\Omega;\RR^m)\ni u_n\wto u\right\}\\
=&\lim_{n\to\infty}\int_O L(x,\nabla u_n(x))dx.
\end{align*}
Up to a subsequence, since $p\ssup d$, we may assume that 
\begin{align}\label{CONVU}
u_n\to u\mbox{ in }L^\infty(\Omega;\RR^m).
\end{align}
Passing to a subsequence if necessary, we may find a nonnegative Radon measure $\mu$ such that
\[
L(\cdot,\nabla u_n(\cdot))dx_{\lfloor_{O}}\swto \mu\;\mbox{ as }n\to \infty\mbox{ weakly $\ast$ in the sense of measures.}
\]
It is enough to prove that for all $t\in ]0,1[$
\begin{align}\label{CLAIM}
\frac{d\mu}{dx}(\cdot)+\Delta_L^a(t)\left(a(\cdot)+\frac{d\mu}{dx}(\cdot)\right)\ge \Z L(\cdot,t\nabla u(\cdot)) \mbox{ a.e. in }O.
\end{align}
Indeed, by Alexandrov theorem, we will have
\begin{align*}
\L(u;O)=\lim_{n\to \infty}L(u_n;O)\ge \liminf_{n\to \infty}\int_O L(x,\nabla u_n)dx=\mu(O)\ge\int_O \frac{d\mu}{dx}(x)dx,
\end{align*}
so by integrating over $O$ in \eqref{CLAIM}, we find 
\begin{align*}
\L(u;O)+\Delta_L^a(t)\left(\vert a\vert_{L^1(O)}+\L(u;O)\right)\ge\int_O \Z L(x,t\nabla u(x))dx.
\end{align*}
As $L$ is ru-usc, we obtain the result by passing to the limit $t\to 1$ and by using Fatou lemma.

Since $\int_\Omega \W(x,\nabla u(x))dx\sinf\infty$, we fix $x_0\in O$ such that \ref{H4} holds and
\begin{align}
&\W(x_0,\nabla u(x_0))\sinf\infty;\label{Blow-up: eq0}\\
&L(x_0,\nabla u(x_0))\le \W(x_0,\nabla u(x_0))\sinf\infty;\label{Blow-up:eq01}\\
&\frac{d\mu}{dx}(x_0)=\lim_{\eps\to 0}\frac{\mu(Q_\eps(x_0))}{\eps^d}\sinf\infty;\label{Blow-up:eq1}\\
&\lim_{\eps\to 0} \frac{1}{\eps}\left\Vert u-u(x_0)-\nabla u(x_0)(\cdot-x_0)\right\Vert_{L^\infty(Q_\eps(x_0);\RR^m)}=0;\label{Blow-up:eq2}\\
&a(x_0)=\lim_{\eps\to 0}\fint_{Q_\eps(x_0)} a(x)dx\sinf\infty; \label{Blow-up: eq-1}\\
&\maxw(x_0)=\lim_{\eps\to 0}\fint_{Q_\eps(x_0)} \maxw(x)dx\sinf\infty; \label{Blow-up: eq-2}
\end{align}
where $Q_\eps(x_0):=x_0+\eps Y$. Note that \eqref{Blow-up: eq-2} is a consequence of Lemma~\ref{lemma finite integrand}. 

\medskip

Choose $\eps_k\to 0$ such that  
$
\mu(\partial Q_{\eps_k}(x_0))=0.
$
Then 
\begin{align}\label{extract: eq1}
\lim_{k\to \infty}\frac{\mu(Q_{\eps_k}(x_0))}{\eps^d_k}&=\lim_{k\to \infty}\lim_{n\to \infty}\fint_{Q_{\eps_k}(x_0)}L(x,\nabla u_n)dx\\
&=\lim_{k\to \infty}\lim_{n\to \infty}\int_Y L(x_0+\eps_k y,\nabla v_{n,k})dy,\notag
\end{align}
where $v_{n,k}(y):=\frac{u_n(x_0+\eps_ky)-u(x_0)}{\eps_k}$. By \eqref{CONVU} we have
\begin{align}\label{almost diff: eq1}
\lim_{k\to\infty}\lim_{n\to \infty}\left\Vert v_{n,k}-l_{\nabla u(x_0)}\right\Vert_{L^\infty(Y;\RR^m)}=0\quad\mbox{ where }\quad l_{\nabla u(x_0)}(y):=\nabla u(x_0)y.
\end{align}
Fix $s,r\in ]0,1[$ such that $s\sinf r$. Then \eqref{Blow-up:eq1} implies (see Subsect.~\ref{measure-rs} for the proof)
\begin{align}\label{extract: eq4}
\lim_{k\to\infty}\limsup_{n\to\infty}\frac{\mu_n\left(Q_{r\eps_k}(x_0)\ssetminus \overline{Q}_{s\eps_k}(x_0)\right)}{\eps_k^d}=(r^d-s^d)\frac{d\mu}{dx}(x_0).
\end{align}
By a simultaneous diagonalization of \eqref{extract: eq1}, \eqref{almost diff: eq1} and \eqref{extract: eq4}, we may extract a subsequence $v_n:=v_{n,k_n}$ satisfying
\begin{align}
&v_n\to l_{\nabla u(x_0)}\mbox{ in }L^\infty(Y;\RR^m),\quad v_n\wto l_{\nabla u(x_0)} \mbox{ in }W^{1,p}(Y;\RR^m),\label{HYPCUT1}\\
&\frac{d\mu}{dx}(x_0)=\lim_{n\to \infty}\int_Y L(x_0+\eps_n y,\nabla v_n)dy,\label{HYPCUT2}\\
&(r^d-s^d)\frac{d\mu}{dx}(x_0)= \lim_{n\to\infty}\frac{\mu_n\left(Q_{r\eps_n}(x_0)\ssetminus \overline{Q}_{s\eps_n}(x_0)\right)}{\eps_n^d},\label{diagonal2}
\end{align}
where $\eps_{k_n}:=\eps_n$.
\subsection{Step 2: Cut-off technique to substitute $v_n$ with $w_n\in l_{\nabla u(x_0)}+W^{1,p}(Y;\RR^m)$} 
For simplicity of notation we set $\theta_{x_0,n}(y):=x_0+\eps_ny$ for all $y\in Y$. 
In this section we use cut-off functions to show that there exists $\{w_n\}_n\subset l_{\nabla u(x_0)}+W^{1,p}_0(Y;\RR^m)$ such that for every $t\in ]0,1[$
\begin{align}\label{CUTOFF}
\limsup_{n\to \infty}\int_Y L(\theta_{x_0,n}, t\nabla w_n)dy\le \frac{d\mu}{dx}(x_0)+\Delta_L^a(t)\left(a(x_0)+\frac{d\mu}{dx}(x_0)\right).
\end{align}
If \eqref{CUTOFF} holds then 
\begin{align*}
\Z L(x_0,t\nabla u(x_0))&\le\liminf_{\eps\to 0}\inf\left\{\int_Y L(x_0+\eps y,t\nabla w)dy:w\in l_{\nabla u(x_0)}+W^{1,p}_0(Y;\RR^m)\right\}\\
&\le\limsup_{n\to \infty}\int_Y L(\theta_{x_0,n},t\nabla w_n)dy\\
&=\frac{d\mu}{dx}(x_0)+\Delta_L^a(t)\left(a(x_0)+\frac{d\mu}{dx}(x_0)\right),
\end{align*}
and the claim~\ref{CLAIM} follows.

Now, let us prove \eqref{CUTOFF}. Fix any $t\in]0,1[$. Let $\phi\in W^{1,\infty}_0(Y;[0,1])$ be a cut-off function between ${s} \overline{Y}$ and $\overline{Y}\ssetminus r{Y}$ such that $\|\nabla\phi\|_{L^\infty(Y)}\leq{4\over r-s}$. Setting
\[
w_n:=\phi v_n+(1-\phi)l_{\nabla u(x_0)}.
\]
We have $w_n\in l_{\nabla u(x_0)}+W^{1,p}_0(Y;\RR^m)$ and
\begin{align*}
\nabla w_n:=\left\{
\begin{array}{ll}
\nabla v_n&\mbox{ on }{s}Y\\ \\
\phi\nabla v_n+(1-\phi)\nabla u(x_0)+\Phi_{n,s,r}&\mbox{ on }U_{s,r} \\\\
{\nabla u(x_0)}&\mbox{ on }Y\ssetminus r \overline{Y},
\end{array}
\right.
\end{align*}
where $\Phi_{n,s,r}:=\nabla\phi\otimes\left(v_n-l_{\nabla u(x_0)}\right)$ and $U_{s,r}:=r {Y}\ssetminus {s} \overline{Y}$.

For every $n\ge 1$, it holds
\begin{align}\label{blow-up:estimate0}
&\int_Y L(\theta_{x_0,n},t\nabla w_n)dy\\
=&\int_{{s}Y} L(\theta_{x_0,n},t\nabla v_n)dy+\int_{U_{s,r}} L(\theta_{x_0,n},t\nabla w_n)dy+\int_{Y\ssetminus {r}\overline{Y}} L(\theta_{x_0,n},t\nabla u(x_0))dy\notag\\
\le & \int_{Y} L(\theta_{x_0,n},t\nabla v_n)dy+\int_{U_{s,r}} L(\theta_{x_0,n},t\nabla w_n)dy+\int_{Y\ssetminus {r}\overline{Y}} L(\theta_{x_0,n},t\nabla u(x_0))dy.\notag
\end{align}
The rest of the proof consists to give estimates from above, as $n\to\infty$, of the last three terms of \eqref{blow-up:estimate0}.\\

\noindent{\bf Bound for $\limsup_{n\to\infty}\int_{Y} L(\theta_{x_0,n},t\nabla v_n)dy$.} Since $L$ is ru-usc, using \ref{HYPCUT2} and \eqref{Blow-up: eq-1}, we have for every $n\ge 1$ 
\begin{align}\label{blow-up:estimate1}
&\limsup_{n\to\infty}\int_{Y} L(\theta_{x_0,n},t\nabla v_n)dy\\
\le&\limsup_{n\to\infty}\left( \Delta_L^a(t)\int_Ya(\theta_{x_0,n})+L(\theta_{x_0,n},\nabla v_n)dy+\int_{Y} L(\theta_{x_0,n},\nabla v_n)dy\right)\notag\\
\le& \Delta_L^a(t)\left(a(x_0)+\frac{d\mu}{dx}(x_0)\right)+\frac{d\mu}{dx}(x_0).\notag
\end{align}

\noindent{\bf Bound for $\limsup_{n\to\infty}\int_{Y\setminus {r}\overline{Y}} L(\theta_{x_0,n},t\nabla u(x_0))dy$.} Similarly to the previous estimate we have
\begin{align*}
&\limsup_{n\to\infty}\int_{Y\ssetminus {r}\overline{Y}} L(\theta_{x_0,n},t\nabla u(x_0))dy\\
&\le\limsup_{n\to\infty}\left( \Delta_L^a(t)\int_{Y\ssetminus {r}\overline{Y}}a(\theta_{x_0,n})+L(\theta_{x_0,n},\nabla u(x_0))dy+\int_{Y\ssetminus {r}\overline{Y}} L(\theta_{x_0,n},\nabla u(x_0))dy\right)\notag\\
&\le \Delta_L^a(t)\left((1-r^d)a(x_0)+A_r(x_0)\right)+A_r(x_0)\notag
\end{align*}
where $A_r(x_0):=\limsup_{n\to\infty}\int_{Y\ssetminus {r}\overline{Y}} L(\theta_{x_0,n},\nabla u(x_0))dy$. Now, taking account of \eqref{Blow-up: eq0} and \eqref{Blow-up:eq01}, we have, by \ref{H4}, an upper bound for $A_r(x_0)$
\begin{align}\label{blow-up:estimate2bis}
A_r(x_0)\le (1-r^d)\a(x_0,\nabla u(x_0)).
\end{align}
We deduce 
\begin{align}\label{blow-up:estimate2}
\limsup_{n\to\infty}&\int_{Y\setminus {r}\overline{Y}} L(\theta_{x_0,n},t\nabla u(x_0))dy\\
&\le (1-r^d)\left(\Delta_L^a(t)(a(x_0)+L(x_0,\nabla u(x_0)))+\a(x_0,\nabla u(x_0))\right).\notag
\end{align}

\noindent{\bf Bound for $\limsup_{n\to \infty}\int_{U_{s,r}} L(\theta_{x_0,n},t\nabla w_n)dy$.}
Since $\W$ satisfies \ref{H3}, we have that $L=\Z\W$ also satisfies \ref{H3} by Lemma~\ref{ZL satisfies H3}. Therefore for every $n\ge 1$
\begin{align*}
&\int_{U_{s,r}} L(\theta_{x_0,n},t\nabla w_n)dy\\
&\le C_1\left((r^d-s^d)+\int_{U_{s,r}} L(\theta_{x_0,n},\nabla v_n)dy+\int_{U_{s,r}} L(\theta_{x_0,n},\nabla u(x_0))dy\right.\\
&\left.+\int_{U_{s,r}} L\left(\theta_{x_0,n},\frac{t}{1-t}\Phi_{n,s,r}\right)dy\right)
\end{align*}
where $C_1=C(1+C)$. Since \ref{HYPCUT1}, there exists $N_{0}\ge 1$ such that for every $n\ge N_{0}$
\begin{align*}
\left\Vert{t\over 1-{t}}\Phi_{n,s,r}\right\Vert_{L^\infty(Y;\MM^{m\times d})}\le \rho_0
\end{align*}
where $\rho_0\ssup 0$ is given by Lemma~\ref{lemma finite integrand}. Taking account of \eqref{Blow-up: eq-2}, we have
\begin{align}\label{eq2CUT}
\limsup_{n\to \infty}\int_{U_{s,r}}L\left(\theta_{x_0,n},{t\over 1-{t}}\Phi_{n,s,r}\right)dy&\le \limsup_{n\to \infty}\int_{U_{s,r}}\sup_{\zeta\in\overline{\Q}_{\rho_0}(0)}L(\theta_{x_0,n},\zeta)dy\\
&\le (r^d-s^d)\maxw(x_0).\notag
\end{align}
Using similar reasoning as in estimate \eqref{blow-up:estimate2bis}, we find
\begin{align}\label{eq3CUT}
 \limsup_{n\to \infty}\int_{U_{s,r}} L(\theta_{x_0,n},\nabla u(x_0))dy\le (r^d-s^d)\a(x_0,\nabla u(x_0)).
\end{align} 
Since \eqref{diagonal2}, we have
\begin{align}\label{eq4CUT}
 \limsup_{n\to \infty}\int_{U_{s,r}} L(\theta_{x_0,n},\nabla v_n)dy=(r^d-s^d)\frac{d\mu}{dx}(x_0).
\end{align} 
Collecting \eqref{eq2CUT}, \eqref{eq3CUT} and \eqref{eq4CUT}, we obtain
\begin{align}\label{blow-up:estimate3}
&\limsup_{n\to \infty}\int_{U_{s,r}} L(\theta_{x_0,n},t\nabla w_n)dy\\
&\le C_1(s^d-r^d)\left(1+\frac{d\mu}{dx}(x_0)+\a(x_0,\nabla u(x_0))+\maxw(x_0)\right)\notag.
\end{align}

\noindent{\bf End of the proof of \eqref{CUTOFF}.} Collecting \eqref{blow-up:estimate1}, \eqref{blow-up:estimate2} and \eqref{blow-up:estimate3}, we have
\begin{align*}
&\limsup_{n\to \infty}\int_Y L(\theta_{x_0,n},t\nabla w_n)dy\\
&\le\Delta_L^a(t)\left(a(x_0)+\frac{d\mu}{dx}(x_0)\right)+\frac{d\mu}{dx}(x_0)\\
&+(1-r^d)\big(\Delta_L^a(t)(a(x_0)+L(x_0,\nabla u(x_0)))+\a(x_0,\nabla u(x_0))\big)\\
&+C_1(r^d-s^d)\left(1+\frac{d\mu}{dx}(x_0)+\a(x_0,\nabla u(x_0))+\maxw(x_0)\right).
\end{align*}
we obtain \eqref{CUTOFF} by letting $r\to 1$ and $s\to 1$. $\hfill\blacksquare$

\subsubsection{Proof of \eqref{extract: eq4}}\label{measure-rs} By \eqref{Blow-up:eq1} we have
\begin{align}\label{Blow-up paramater: eq}
\frac{d\mu}{dx}(x_0)=\lim_{\eps\to 0}\frac{\mu(Q_{s\eps}(x_0))}{(s\eps)^d}=\lim_{\eps\to 0}\frac{\mu(Q_{r\eps}(x_0))}{(r\eps)^d}=\lim_{\eps\to 0}\frac{\mu(\overline{Q}_{s\eps}(x_0))}{(s\eps)^d}=\lim_{\eps\to 0}\frac{\mu(\overline{Q}_{r\eps}(x_0))}{(r\eps)^d}.
\end{align}
on one hand we have
\begin{align}
\label{extract: eq2}\liminf_{k\to \infty}\liminf_{n\to\infty}\frac{\mu_n\left(Q_{r\eps_k}(x_0)\ssetminus \overline{Q}_{s\eps_k}(x_0)\right)}{\eps_k^d} &\ge \liminf_{k\to \infty}\frac{\mu\left(Q_{r\eps_k}(x_0)\ssetminus \overline{Q}_{s\eps_k}(x_0)\right)}{\eps_k^d}\\
&= (r^d-s^d)\frac{d\mu}{dx}(x_0)\notag
\end{align}
by using \eqref{Blow-up paramater: eq} and Alexandrov theorem. Similarly, on the other hand we have
\begin{align}
\label{extract: eq3}&\limsup_{k\to\infty}\limsup_{n\to\infty}\frac{\mu_n\left(Q_{r\eps_k}(x_0)\ssetminus \overline{Q}_{s\eps_k}(x_0)\right)}{\eps_k^d}\\
&\le \limsup_{k\to \infty}\limsup_{n\to\infty}\left(r^d \frac{\mu_n(Q_{r\eps_k}(x_0))}{(r\eps_k)^d}- s^d\frac{\mu_n(\overline{Q}_{s\eps_k}(x_0))}{(s\eps_k)^d}\right)\notag\\
&\le \limsup_{k\to \infty}\limsup_{n\to\infty}\left(r^d \frac{\mu_n(\overline{Q}_{r\eps_k}(x_0))}{(r\eps_k)^d}- s^d\frac{\mu_n({Q}_{s\eps_k}(x_0))}{(s\eps_k)^d}\right)\notag\\
&\le \limsup_{k\to \infty}\left(r^d \frac{\mu(\overline{Q}_{r\eps_k}(x_0))}{(r\eps_k)^d}- s^d\frac{\mu({Q}_{s\eps_k}(x_0))}{(s\eps_k)^d}\right)= (r^d-s^d)\frac{d\mu}{dx}(x_0)\notag
\end{align}
Combining \eqref{extract: eq2} and \eqref{extract: eq3}, we obtain \eqref{extract: eq4}. $\hfill\blacksquare$

\section{Proof of Proposition~\ref{exist-ZL}}\label{Proof of Theorem exist-ZL}

We denote by $\Cub$ the family of all open cubes of $\RR^d$. We denote by $\Cub_\delta$ the family of all open cubes $\Q$ of $\RR^d$ such that $\diam(\Q)\sinf \delta$, where $\delta\ssup 0$. For each $E\subset \RR^d$, we associate the set $\FF_\delta(E)$ of all countable families $\{\Q_i\}_{i\in I}\subset\Cub_\delta$ satisfying $\vert E\setminus \mathop{\cup}_{i\in I}\Q_i\vert=0$, $\Q_i\cap E\not=\emptyset$ for all $i\in I$, and $\overline{\Q}_i\cap\overline{\Q}_j=\emptyset$ for all $i\not=j$. If $E\not=\emptyset$ then $\FF_\delta(E)\not=\emptyset$, indeed, by the Vitali covering theorem, it is always possible by starting from a family of closed cubes of $\RR^d$ with center in $E$ to find a countable subfamily of open cubes in $\FF_\delta(E)$ because the Lebesgue measure of the boundary of a cube is null, i.e., $\vert \overline{\Q}\vert=\vert\Q\vert$ for all $\Q\in\Cub$.

Let $\M$ be a nonnegative set function defined for all cubes of $\RR^d$ such that $\M(\emptyset)=0$. Let $\M^\sharp:\mathcal{P}(E)\to [0,\infty]$ be defined by 
\begin{align*}
\M^\sharp(E)&:=\left\{
\begin{array}{cl}
\displaystyle\sup_{\delta>0}\M^\delta(E)&\mbox{ if }E\not=\emptyset\\
0&\mbox{ otherwise,}
\end{array}
\right.
\\
\mbox{ with }\;\;\M^\delta(E)&:=\inf\left\{\sum_{i\in I}\M(\Q_i):\{\Q_i\}_{i\in I}\in\FF_\delta(E)\right\}.
\end{align*}
We denote by $\omega\in [0,\infty]$ the number
\begin{align*}
\omega:=\limsup_{\delta\to 0}\sup_{\substack{\Q\subset\Omega\\ \diam(\Q)<\delta}}\frac{\M(\Q)}{\vert \Q\vert}
\end{align*}
where $\Q$ denotes any arbitrary open cube of $\RR^d$.

\medskip

The following result is an abstract version of Proposition~\ref{exist-ZL}.
\begin{proposition}\label{existlimit} If $\omega\sinf\infty$ and 
\begin{equation}\label{hyp-import}
\limsup_{\delta\to 0}\frac{\M(\Q_\delta(x))}{\delta^d}\le \limsup_{\delta\to 0}\frac{\M^\sharp(\Q_\delta(x))}{\delta^d}\mbox{ a.e. in }\Omega
\end{equation}
then
\begin{align*}
\limsup_{\delta \to 0}\frac{\M(\Q_\delta(x))}{\delta^d}=\liminf_{\delta \to 0}\frac{\M(\Q_\delta(x))}{\delta^d}\mbox{ a.e. in }\Omega
\end{align*}
where $\Q_\delta(x)=x+\delta Y$ for any $x\in\Omega$ and $\delta\ssup 0$.
\end{proposition}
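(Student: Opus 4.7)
The plan is to identify $\M^\sharp$ with a Radon measure that is absolutely continuous with respect to the Lebesgue measure, and then pinch the limit of $\M(\Q_\delta(x))/\delta^d$ between the upper and lower densities of this measure by combining Lebesgue differentiation with a Vitali covering argument.

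First I would establish that $\M^\sharp$ extends to a Borel measure $\mu$ on $\Omega$ with density $\phi:=d\mu/dx\in L^\infty(\Omega)$ satisfying $0\le\phi\le\omega$. The bound $\omega<\infty$ gives, for any $\eta>0$, a threshold $\delta_0>0$ such that $\M(\Q)\le(\omega+\eta)|\Q|$ whenever $\diam(\Q)<\delta_0$; summing along any $\{\Q_i\}\in\FF_\delta(U)$ (with $\delta<\delta_0$ and $U$ a bounded open set) yields $\M^\delta(U)\le(\omega+\eta)|U^\delta|$, where $U^\delta$ denotes the $\delta$-neighborhood of $U$, and the limit $\delta\to 0$ then gives $\M^\sharp(U)\le\omega|U|$. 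Applying the Carath\'eodory construction (cf.\ \cite{federer69, bellieud-bouchitte00} and the argument of Lemma~\ref{mast:measure}) supplies the Borel extension $\mu$, which is absolutely continuous with respect to Lebesgue and so admits the density $\phi$. The Lebesgue--Besicovitch differentiation theorem gives $\M^\sharp(\Q_\delta(x))/\delta^d\to\phi(x)$ a.e., and combining this with \eqref{hyp-import} yields
\[
\phi_+(x):=\limsup_{\delta\to 0}\frac{\M(\Q_\delta(x))}{\delta^d}\le\phi(x)\quad\text{a.e.\ in }\Omega.
\]
Moreover, since $\M(\Q_\delta(x))/\delta^d\le\omega+\eta$ for all $\delta$ sufficiently small, the lower density $\phi_-(x):=\liminf_{\delta\to 0}\M(\Q_\delta(x))/\delta^d$ also belongs to $L^\infty(\Omega)$.

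The heart of the proof is the reverse density inequality $\phi\le\phi_-$ a.e., which I would derive by a Vitali covering argument. Fix a cube $\Q_0$ with $\overline{\Q}_0\subset\Omega$ and a number $\eps>0$. For a.e.\ $x\in\Q_0$ the point $x$ is a Lebesgue point of $\phi_-$, and the definition of $\phi_-$ furnishes arbitrarily small $\delta$ with
\[
\frac{\M(\Q_\delta(x))}{\delta^d}<\phi_-(x)+\eps\quad\text{and}\quad\fint_{\Q_\delta(x)}\phi_-(y)\,dy<\phi_-(x)+\eps.
\]
For any $\delta^*>0$, the family of all such cubes with $\delta<\delta^*/\sqrt{d}$ is a Vitali cover of $\Q_0$, from which Vitali's theorem extracts a countable disjoint-closure subfamily $\{\overline{\Q}_i=\overline{\Q}_{\delta_i}(x_i)\}_i\in\FF_{\delta^*}(\Q_0)$ with $|\Q_0\setminus\cup_i\Q_i|=0$. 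Then
\[
\M^{\delta^*}(\Q_0)\le\sum_i\M(\Q_i)<\sum_i(\phi_-(x_i)+\eps)|\Q_i|\le\sum_i\int_{\Q_i}\phi_-\,dy+2\eps\sum_i|\Q_i|.
\]
Since $\cup_i\Q_i$ is contained in an $O(\delta^*)$-neighborhood of $\Q_0$ and $\phi_-\in L^\infty$, passing first to $\delta^*\to 0$ (so that $\M^{\delta^*}(\Q_0)\nearrow\mu(\Q_0)$) and then to $\eps\to 0$ gives $\int_{\Q_0}\phi\,dy=\mu(\Q_0)\le\int_{\Q_0}\phi_-\,dy$ for every cube $\Q_0$ compactly supported in $\Omega$. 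The Lebesgue differentiation theorem for $L^1$ functions then delivers $\phi\le\phi_-$ a.e., and combined with $\phi_-\le\phi_+\le\phi$ a.e.\ this yields the claimed equality of $\limsup$ and $\liminf$.

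The step I expect to require the most care is the Vitali selection: one needs cubes that simultaneously have small diameter, near-optimal $\M$-density, near-optimal $\phi_-$-average, and form a family with pairwise \emph{disjoint closures} as required by $\FF_{\delta^*}$. Each of these features holds at a.e.\ point for arbitrarily small $\delta$, so their conjunction still produces a Vitali cover, and the classical Vitali covering theorem for closed cubes delivers disjointness of the closures automatically; one only needs to bookkeep that the selected centers $x_i$ retain all three prescribed properties and that the residual boundary contribution $\int_{(\cup_i\Q_i)\setminus\Q_0}\phi_-\,dy$ vanishes in the limit, which follows from $\phi_-\in L^\infty$.
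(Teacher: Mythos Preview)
Your overall strategy---pinch $\phi_+$ below a density of $\M^\sharp$ and then bound that density by $\phi_-$ via a Vitali argument---is sound, but the first step has a real gap. You invoke the Carath\'eodory construction and Lemma~\ref{mast:measure} to promote $\M^\sharp$ to a Radon measure; neither applies here. Lemma~\ref{mast:measure} concerns $\M^\ast_H$, built from the families $\mathcal V_\eps(O)$ of cubes \emph{contained} in $O$, whereas $\M^\sharp$ uses the families $\FF_\delta(E)$ of cubes that merely meet $E$ and have pairwise \emph{disjoint closures}. Because covers of pieces $E_j$ cannot be merged into an admissible cover of $\cup_jE_j$ (the closures may touch), countable subadditivity of $\M^\sharp$ is not available; the paper says so explicitly (``we do not know whether it is an outer measure'') and never treats $\M^\sharp$ as a measure. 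Your appeal to Lebesgue--Besicovitch differentiation of $\mu$ is therefore unjustified as written.

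The good news is that this detour is unnecessary. Your Vitali selection (with the inequality for the $\phi_-$-average reversed: you need $\fint_{\Q_\delta(x)}\phi_->\phi_-(x)-\eps$, not the upper bound you stated) already yields $\M^\sharp(\Q_0)\le\int_{\Q_0}\phi_-\,dy$ for every cube $\Q_0\subset\subset\Omega$. Dividing by $\delta^d$ and applying Lebesgue differentiation to $\phi_-$ (not to $\M^\sharp$) gives $\limsup_{\delta\to 0}\M^\sharp(\Q_\delta(x))/\delta^d\le\phi_-(x)$ a.e., and \eqref{hyp-import} then delivers $\phi_+\le\phi_-$ directly---no density $\phi$ is ever needed. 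The paper reaches the same conclusion by a longer route: it introduces the derivates $\M^\pm_\ast$ extremised over \emph{all} cubes through $x$, proves the regularity identity $\overline{\M^\sharp}(K)=\M^\sharp(K)$ on compacta (Lemma~\ref{galere}), and then runs a level-set comparison (Lemmas~\ref{gmt-classics}--\ref{existlimitcor}). Your argument, once cleaned up, is shorter and closer in spirit to Lemma~\ref{gmt-classics}(ii) alone. The residual loose end is the measurability of $\phi_-$: nothing is assumed about the joint dependence $(x,\delta)\mapsto\M(\Q_\delta(x))$, so the Lebesgue-point step for $\phi_-$ needs justification; the paper handles the analogous issue for $\M^-_\ast$ via a Vitali argument on level sets (Lemma~\ref{measurability-m}), and a similar device would work here.
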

The set function $\M^\sharp$ is of Carath\'eodory type construction (see \cite[Sect. 2.10, p. 169]{federer69}). Although we do not know whether it is an outer measure we have the following result. 
\begin{lemma}\label{outermeasure} The set function $\M^\sharp$ satisfies:
\begin{enumerate}
\item[(i)] if $E_1,E_2\subset\RR^d$ are two sets such that $\dist(E_1,E_2)\ssup 0$ then $\M^\sharp(E_1\cup E_2)=\M^\sharp(E_1)+\M^\sharp(E_2)$;
\item[(ii)] if $E,V\subset\RR^d$ are such that $V$ is a nonempty open set and $E\subset V$ then $\M^\sharp(E)\le \M^\sharp(V)$;
\item[(iii)] if $\omega\sinf\infty$ then $\M^\sharp(E)\le \omega\vert E\vert$ for all closed set $E\subset\Omega$.
\end{enumerate} 
\end{lemma}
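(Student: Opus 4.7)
My plan rests on one elementary monotonicity: $\delta \mapsto \M^\delta(E)$ is non-increasing in $\delta$, since $\FF_\delta(E) \subset \FF_{\delta'}(E)$ for $\delta \le \delta'$; consequently $\M^\sharp(E) = \lim_{\delta \to 0^+} \M^\delta(E)$. I shall also repeatedly use the trivial fact that any cube $\Q \in \Cub_\delta$ meeting a set $S$ is contained in the open $\delta$-neighborhood $S_\delta := \{x : \dist(x,S) < \delta\}$. Property (ii) then follows at once: given any $\{\Q_i\}_{i \in I} \in \FF_\delta(V)$, extract $J := \{i \in I : \Q_i \cap E \ne \emptyset\}$; since $E \subset V$ and $|V \setminus \bigcup_i \Q_i| = 0$, the subfamily $\{\Q_i\}_{i \in J}$ lies in $\FF_\delta(E)$ (the case $E = \emptyset$ being trivial), so by nonnegativity of $\M$ we obtain $\M^\delta(E) \le \sum_{i \in J}\M(\Q_i) \le \sum_{i \in I}\M(\Q_i)$, yielding $\M^\delta(E) \le \M^\delta(V)$ and hence (ii) after $\sup_\delta$.

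For (i), set $d := \dist(E_1,E_2) > 0$ and restrict to $\delta < d/2$. The neighborhood observation shows that any cube in $\Cub_\delta$ meeting $E_1 \cup E_2$ meets exactly one $E_j$ (its diameter being $< d/2$) and lies in the corresponding $\delta$-neighborhood; two cubes with different labels therefore have disjoint closures. Splitting an arbitrary family in $\FF_\delta(E_1 \cup E_2)$ according to which $E_j$ each cube meets yields two subfamilies in $\FF_\delta(E_1)$ and $\FF_\delta(E_2)$, hence $\M^\delta(E_1 \cup E_2) \ge \M^\delta(E_1) + \M^\delta(E_2)$. Conversely, concatenating near-optimal families for $E_1$ and $E_2$ produces an admissible family for $E_1 \cup E_2$ (the two halves having disjoint closures by the same separation), giving the reverse inequality; sending $\delta \to 0^+$ completes (i).

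For (iii), the main step, $\omega < \infty$ furnishes, for every $\eps > 0$, some $\delta_\eps > 0$ such that $\M(\Q) \le (\omega + \eps)|\Q|$ whenever $\Q \subset \Omega$ and $\diam \Q < \delta_\eps$. Since $E \subset \Omega$ is closed and $\Omega$ is bounded, $E$ is compact and $\eta := \dist(E, \partial \Omega) > 0$. For $\delta < \min(\delta_\eps, \eta)$ and any $\{\Q_i\}_{i \in I} \in \FF_\delta(E)$, each $\Q_i$ sits in $E_\delta \subset \Omega$ so the $\omega$-bound applies, and the $\Q_i$ are pairwise disjoint, hence $\sum_i \M(\Q_i) \le (\omega + \eps)\sum_i |\Q_i| \le (\omega + \eps)|E_\delta|$. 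Taking the infimum gives $\M^\delta(E) \le (\omega + \eps)|E_\delta|$; letting $\delta \to 0^+$ (so $|E_\delta| \downarrow |E|$ by continuity from above, using $E$ closed and bounded) and then $\eps \to 0^+$ yields $\M^\sharp(E) \le \omega |E|$. The only non-routine point in the whole proof is this containment $\Q_i \subset \Omega$ needed to invoke the definition of $\omega$, which is secured by the compactness of $E$ inside the open bounded set $\Omega$; everything else is bookkeeping about almost-covers and disjoint closures.
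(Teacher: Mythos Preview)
Your proof is correct and follows essentially the same strategy as the paper's: split and recombine admissible families for (i), extract the subfamily of cubes meeting $E$ for (ii), and bound $\sum_i \M(\Q_i)$ by the measure of the $\delta$-neighborhood for (iii). Two small remarks: in (i) you write $d:=\dist(E_1,E_2)$, which clashes with the paper's use of $d$ for the ambient dimension---rename it; and in (iii) you are actually more careful than the paper, since you explicitly secure $\Q_i\subset\Omega$ via $\eta=\dist(E,\partial\Omega)>0$ (using that a closed subset of the bounded open $\Omega$ is compact), a point the paper's own argument leaves implicit when invoking the supremum defining $\omega$.
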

\begin{proof}(i) We show that for every $E_1,E_2\subset \RR^d$ satisfying $\dist(E_1,E_2)\ssup \delta_0$ for some $\delta_0\ssup 0$ we have 
\begin{align}\label{outermeasure: eq1}
\M^\sharp(E_1\cup E_2)\ge \M^\sharp(E_1)+\M^\sharp(E_2).
\end{align}
Fix $\delta\in ]0,\delta_0[$. Choose $\{\Q_i\}_{i\in I}\in\Cub_\delta$ satisfying $\vert (E_1\cup E_2) \setminus \mathop{\cup}_{i\in I}\Q_i\vert=0$, $\Q_i\cap (E_1\cup E_2)\not=\emptyset$ for all $i\in I$, and
\begin{align}\label{eqoutermeasure2}
\M^\sharp(E_1\cup E_2)+{\delta}\ge \sum_{i\in I}\M(\Q_i).
\end{align}
Let $I_j=\{i\in I: \Q_i\cap E_j\not=\emptyset\}$ for $j\in\{1,2\}$. Since $\dist(E_1,E_2)\ssup 2\delta$, if $i\in I_1$ (resp. $i\in I_2$) then $\Q_i\cap E_2=\emptyset$ (resp. $\Q_i\cap E_1=\emptyset$). Thus 
\begin{align*}
0=\vert (E_1\cup E_2) \setminus \mathop{\cup}_{i\in I}\Q_i\vert=\vert E_1\setminus\mathop{\cup}_{i\in I_1}\Q_i\vert+\vert E_2\setminus\mathop{\cup}_{i\in I_2}\Q_i\vert,
\end{align*}
hence $\vert E_j\setminus\mathop{\cup}_{i\in I_j}\Q_i\vert=0$ and $\Q_i\cap E_j\not=\emptyset$ for all $i\in I_j$. From \eqref{eqoutermeasure2} we have
\begin{align*}
\M^\sharp(E_1\cup E_2)+{\delta}\ge \sum_{i\in I_1}\M(\Q_i)+\sum_{i\in I_2}\M(\Q_i)\ge \M^\sharp(E_1)+\M^\sharp(E_2),
\end{align*}
and \eqref{outermeasure: eq1} holds by letting $\delta\to 0$. 

Now, we show that 
\begin{align}\label{outermeasure: eq2}
\M^\sharp(E_1\cup E_2)\le \M^\sharp(E_1)+\M^\sharp(E_2).
\end{align}
For each $j\in\{1,2\}$, choose $\{\Q_i^j\}_{i\in I_j}\in\Cub_\delta$ satisfying $\vert E_j \setminus \mathop{\cup}_{i\in I_j}\Q_i^j\vert=0$, $\Q_i^j\cap E_j\not=\emptyset$ for all $i\in I_j$, and
\begin{align}\label{eqoutermeasure3}
\M^\sharp(E_j)+{\delta}\ge \sum_{i\in I_j}\M(\Q_i^j).
\end{align}
Since $\dist(E_1,E_2)\ssup \delta_0$ the countable family of cubes $\{\Q_i^j:{i\in I_j} \mbox{ and }j\in\{1,2\}\}$ is pairwise disjointed, moreover we have 
\begin{align*}
\vert (E_1\cup E_2)\ssetminus \mathop{\cup}_{i\in I_1}\Q_i\cup \mathop{\cup}_{i\in I_2}\Q_i\vert\le \vert E_1\ssetminus \mathop{\cup}_{i\in I_1}\Q_i\vert+\vert E_2\ssetminus \mathop{\cup}_{i\in I_2}\Q_i\vert=0.
\end{align*}
Summing over $j\in \{1,2\}$ in \eqref{eqoutermeasure3} we obtain
\begin{align*}
\M^\sharp(E_1)+\M^\sharp(E_2)+2\delta\ge \sum_{j\in\{1,2\}}\sum_{i\in I_j}\M(\Q_i^j)\ge \M^\sharp(E_1\cup E_2),
\end{align*}
and \eqref{outermeasure: eq2} holds by letting $\delta\to 0$.

\medskip

(ii) Let $E,V$ be two sets of $\RR^d$ such that $V$ is a nonempty open set and $E\subset V$. For each $\delta\ssup 0$ choose $\{\Q_i\}_{i\in I}\in\Cub_\delta$ satisfying $\vert V \ssetminus \mathop{\cup}_{i\in I}\Q_i\vert=0$, $\Q_i\cap V\not=\emptyset$ for all $i\in I$, and
\begin{align}\label{eqoutermeasure4}
\M^\sharp(V)+{\delta}\ge \sum_{i\in I}\M(\Q_i).
\end{align}
Consider the open set $V_\delta:=\mathop{\cup}_{i\in I}\Q_i$, then $\vert V \ssetminus \overline{V_\delta}\vert=0$, but $V \ssetminus \overline{V_\delta}$ is open so $V \ssetminus \overline{V_\delta}=\emptyset $. It means that $V\subset\overline{V_\delta}$ so $V\subset{V_\delta}$. We deduce that $I_E\not=\emptyset$ where $I_E:=\{i\in I:\Q_i\cap E\not=\emptyset\}$. We have $\vert E\ssetminus \mathop{\cup}_{i\in I_E}\Q_i\vert=\vert E\ssetminus \mathop{\cup}_{i\in I}\Q_i\vert\le \vert V \ssetminus \mathop{\cup}_{i\in I}\Q_i\vert =0$, thus $\{\Q_i\}_{i\in I_E}\in \FF_\delta(E)$ and so from \eqref{eqoutermeasure4} 
\begin{align*}
\M^\sharp(V)+{\delta}\ge \M^\sharp(E),
\end{align*}
and (ii) holds by letting $\delta\to 0$.

\medskip

(iii) Fix $\delta\ssup 0$ and $E\subset \Omega$. Set $E_\delta=\{x\in \RR^d:\dist(x,E)\sinf\delta\}$, then for any countable family $\{\Q_i\}_{i\in I}\in\FF_\delta(E)$ we have $\mathop{\cup}_{i\in I}\Q_i\subset E_\delta$ since $\Q_i\cap E\not=\emptyset$ for all $i\in I$ and $\diam(\Q)\sinf\delta$. Therefore
\begin{align*}
\M^\delta(E)\le \sum_{i\in I}\M(\Q_i)\le \sum_{i\in I}\frac{\M(\Q_i)}{\vert\Q_i\vert}\vert\Q_i\vert&\le \sup_{\diam(\Q)<\delta}\frac{\M(\Q)}{\vert\Q\vert}\vert\mathop{\cup}_{i\in I} \Q_i\vert\\&\le \sup_{\diam(\Q)<\delta}\frac{\M(\Q)}{\vert\Q\vert}\vert E_\delta\vert.
\end{align*}
Passing to limit $\delta\to 0$ we obtain $\M^\sharp(E)\le \omega\vert E\vert$.
\end{proof}

Let $\M^+_\ast, \M^-_\ast:\Omega\to [0,\infty]$ be the functions defined by
\begin{align*}
\M^+_\ast(x):=\limsup_{\delta \to 0}\sup_{x\in\Q\in\Cub_\delta}\frac{\M^\sharp(\Q)}{\vert\Q\vert}\mbox{ and }\M^-_\ast(x):=\liminf_{\delta \to 0}\inf_{x\in\Q\in\Cub_\delta}\frac{\M(\Q)}{\vert\Q\vert}.
\end{align*}
\begin{lemma}\label{measurability-m} Let $a,b\in \RR^+$. Then
\begin{itemize}
\item[(i)] there exists a Borel set $B_a^+\subset \{x\in \Omega: \M^+_\ast(x)\ge a\}$ such that $\vert \{x\in \Omega: \M^+_\ast(x)\le a\}\ssetminus B_a^+\vert=0$;
\item[(ii)] there exists a Borel set $B_b^-\subset \{x\in \Omega: \M^-_\ast(x)\le b\}$ such that $\vert \{x\in \Omega: \M^-_\ast(x)\le b\}\ssetminus B_b^-\vert=0$.
\end{itemize}
\end{lemma}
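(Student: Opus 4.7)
The plan is to establish the slightly stronger fact that both $\M^+_\ast$ and $\M^-_\ast$ are Borel functions on $\Omega$; then the natural choices $B_a^+:=\{x\in\Omega:\M^+_\ast(x)\ge a\}$ and $B_b^-:=\{x\in\Omega:\M^-_\ast(x)\le b\}$ satisfy the required conclusions trivially. Both parts reduce to verifying a semicontinuity property of a one-parameter envelope, followed by monotonicity in the scale parameter $\delta$.

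For part (ii), I would introduce the auxiliary function
\begin{align*}
f_\delta(x):=\inf_{\substack{\Q\in\Cub_\delta\\ x\in\Q}}\frac{\M(\Q)}{\vert\Q\vert}
\end{align*}
and observe that for every $\alpha\in\RR$ the set $\{x\in\Omega:f_\delta(x)<\alpha\}$ is precisely the union of those open cubes $\Q\in\Cub_\delta$ with $\M(\Q)/\vert\Q\vert<\alpha$, hence is open. Thus $f_\delta$ is upper semicontinuous, and in particular Borel. Since $\delta_1<\delta_2$ implies $\Cub_{\delta_1}\subset\Cub_{\delta_2}$, the family $\{f_\delta\}_{\delta>0}$ is non-increasing as $\delta$ increases, so
\begin{align*}
\M^-_\ast(x)=\liminf_{\delta\to 0}f_\delta(x)=\lim_{\delta\to 0^+}f_\delta(x)=\sup_{n\ge 1}f_{1/n}(x).
\end{align*}
A countable supremum of Borel functions is Borel, so $\M^-_\ast$ is Borel and one may take $B_b^-:=\{\M^-_\ast\le b\}$.

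For part (i), the argument is symmetric. I would set
\begin{align*}
g_\delta(x):=\sup_{\substack{\Q\in\Cub_\delta\\ x\in\Q}}\frac{\M^\sharp(\Q)}{\vert\Q\vert}
\end{align*}
and note that $\{x\in\Omega:g_\delta(x)>\alpha\}$ is again a union of open cubes of $\Cub_\delta$ (each $y$ belonging to a qualifying cube $\Q$ inherits the same lower bound), hence open; so $g_\delta$ is lower semicontinuous. The family $\{g_\delta\}_{\delta>0}$ is non-decreasing in $\delta$, which gives
\begin{align*}
\M^+_\ast(x)=\limsup_{\delta\to 0}g_\delta(x)=\lim_{\delta\to 0^+}g_\delta(x)=\inf_{n\ge 1}g_{1/n}(x),
\end{align*}
a countable infimum of Borel functions. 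Therefore $\M^+_\ast$ is Borel and $B_a^+:=\{\M^+_\ast\ge a\}$ works.

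There is no serious obstacle beyond the (small) topological observation that both $f_\delta$ and $g_\delta$ are semicontinuous. The key point is that the admissible cubes in the definitions of $f_\delta$ and $g_\delta$ are open, so membership in such a cube is an open condition on $x$; no prior measurability assumption on the set function $\M$ (or on $\M^\sharp$) is required, only that $\M(\Q)$ and $\M^\sharp(\Q)$ are well-defined numbers in $[0,\infty]$ for each open cube $\Q$.
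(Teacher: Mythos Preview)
Your argument is correct and in fact establishes more than the paper does: you show that $\M^+_\ast$ and $\M^-_\ast$ are Borel functions outright, so that all their level sets are Borel and the conclusions are immediate. The paper proceeds differently and less directly. For (ii) it fixes the level set $M_b=\{\M^-_\ast\le b\}$, builds for each $k\ge 1$ a Vitali fine cover of $M_b$ by closed cubes $\overline{\Q}_\eps(x)$ centered at points of $M_b$ with $\M(\Q_\eps(x))\le(b+\tfrac1k)|\Q_\eps(x)|$, extracts via the Vitali theorem a disjoint countable subfamily $\{\overline{\Q}_i^k\}_{i\in I_k}$ covering $M_b$ up to a null set, and sets $B_b^-:=\bigcap_{k\ge 1}\bigcup_{i\in I_k}\Q_i^k$; one then checks $B_b^-\subset M_b$ by hand. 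This yields only Lebesgue measurability of the level sets (which the paper records in the remark following the lemma). Your semicontinuity observation---that $\{f_\delta<\alpha\}$ and $\{g_\delta>\alpha\}$ are themselves unions of admissible open cubes---combined with the monotonicity of $f_\delta,g_\delta$ in $\delta$ is both shorter and stronger; it also makes the argument insensitive to the apparent misprint in part (i) of the statement (where the second ``$\le a$'' should read ``$\ge a$'', as the usage in the proof of Lemma~\ref{existlimitcor} confirms).
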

\begin{proof} Let us prove (ii). Let $b\in\RR^+$. Set $M_b=\{x\in \Omega: \M^-_\ast(x)\le b\}$. For each $k\in\NN^*$, consider the set
\begin{align*}
\mathcal{G}_{k}:=\left\{\overline{\Q}_\eps(x):x\in M_b, \;\eps\in]0,\mbox{${\frac1k}$}[\mbox{ and }\M(\Q_\eps(x))\le (b+\mbox{${\frac1k}$})\vert \Q_\eps(x)\vert\right\}.
\end{align*}
For each $k\ge 1$ the family $\mathcal{G}_{k}$ is a fine cover of $M_b$, and so by the Vitali covering theorem, there exists a disjointed countable family $\{\overline{\Q}_i^k\}_{i\in I_k}\subset \mathcal{G}_{k}$ such that $\vert M\ssetminus \mathop{\cup}_{i\in I_k}\Q_i^k\vert=0$. Consider the Borel set $B_b^-:=\cap_{k\ge 1}\mathop{\cup}_{i\in I_k}\Q_i^k$, then $\vert M_b\ssetminus B_b^-\vert\le \sum_{k\ge 1}\vert M_b\ssetminus \mathop{\cup}_{i\in I_k}\Q_i^k\vert=0$. If we show that $B_b^-\subset M_b$ then the proof of (ii) will be complete. Let $y\in B_b^-$. Then for every $k\ge 1$ there exists $i_k\in I_k$ such that $y\in\Q_{i_k}^k\in\Cub_{\frac1k}$ and $\M(\Q_{i_k}^k)\le (b+\frac1k)\vert \Q_{i_k}^k\vert$. It follows that
\begin{align*}
\inf_{y\in\Q\in\Cub_{\frac1k}}\frac{\M(\Q)}{\vert \Q\vert}\le \frac{\M(\Q_{i_k}^k)}{\vert \Q_{i_k}^k\vert}\le b+\frac1k,
\end{align*}
letting $k\to \infty$, we obtain that $\M^-_\ast(y)\le b$ which means that $y\in M_b$.

For the proof of (i), it is enough to remark that for $a\ssup 0$
\[
\{x\in \Omega: \M^+_\ast(x)\ge a\}=\left\{x\in \Omega: \liminf_{\delta\to 0}\inf_{x\in\Q\in\Cub_{\delta}}\frac{\vert\Q\vert}{\M^\sharp(\Q)}\le \mbox{$\frac1a$}\right\},
\]
and to apply the same reasoning as in the proof of (ii) with the necessary changes.
\end{proof}
\begin{remark} By Lemma~\ref{measurability-m}, the functions $\M^-_\ast$ and $\M^+_\ast$ are measurable. 
\end{remark}
\begin{remark}\label{strictineq} The same conclusions can be drawn if we replace large inequalities with strict inequalities in the Lemma~\ref{measurability-m}, indeed, it suffices to see for instance that
\begin{align*}
\left\{x\in \Omega: \M^+_\ast(x)\ssup a\right\}=\mathop{\cup}_{n\ge 1} \left\{x\in \Omega: \M^+_\ast(x)\ge a+\mbox{$\frac1n$}\right\}.
\end{align*}
\end{remark}
We denote by $\overline{\M^\sharp}$ the set function 
\begin{align*}
\overline{\M^\sharp}(E)=\inf\big\{\M^\sharp(O): E\subset O,\; O\mbox{ open }\big\} \;\mbox{ for all }E\subset\RR^d.
\end{align*}
\begin{lemma}\label{galere} If $\omega\sinf\infty$ then $\overline{\M^\sharp}(K)={\M^\sharp}(K)$ for all compact $K\subset\Omega$.
\end{lemma}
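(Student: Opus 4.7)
The plan is to prove the nontrivial inequality $\overline{\M^\sharp}(K)\le\M^\sharp(K)$; the reverse follows immediately from Lemma~\ref{outermeasure}~(ii). I would fix $\eta>0$ and construct an explicit open set $O\supset K$ satisfying $\M^\sharp(O)\le\M^\sharp(K)+\eta$. A natural candidate is the tubular neighbourhood $O_\alpha:=\{x\in\RR^d:\dist(x,K)<\alpha\}$, where $\alpha>0$ is chosen small enough that $O_\alpha\subset\Omega$ (possible since $K$ is compactly contained in the bounded open set $\Omega$) and $|O_\alpha\setminus K|$ is as small as required; this last property is granted by monotone convergence, since $O_\alpha\downarrow K$ as $\alpha\to 0$ and $|O_\alpha|<\infty$.

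Because $\M^\sharp(O)=\lim_{\delta\to 0}\M^\delta(O)$, it suffices to bound $\M^\delta(O)$ for small $\delta$. Fixing $\delta\in(0,\alpha)$, I would assemble a near-optimal element of $\FF_\delta(O)$ in two pieces. First, pick $\{\Q_i\}_{i\in I}\in\FF_\delta(K)$ with $\sum_i\M(\Q_i)\le\M^\delta(K)+\eta/3\le\M^\sharp(K)+\eta/3$. The conjunction $\diam(\Q_i)<\delta<\alpha$ and $\Q_i\cap K\ne\emptyset$ forces $\Q_i\subset O_\alpha=O$. Second, apply Vitali's covering theorem inside the open set $W:=O\setminus\bigcup_i\overline{\Q_i}$ to extract a countable pairwise-disjoint family of closed cubes of diameter $<\delta$ contained in $W$ covering $W$ up to a null set; writing $\{\Q'_k\}_k$ for the corresponding open cubes, the inclusion $\overline{\Q'_k}\subset W$ ensures that their closures stay disjoint from every $\overline{\Q_i}$. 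Thus $\{\Q_i\}_{i\in I}\cup\{\Q'_k\}_k$ is a bona fide element of $\FF_\delta(O)$: all diameters are $<\delta$, closures are pairwise disjoint, each cube meets $O$, and $O$ is covered up to the null set $\bigcup_i\partial\Q_i\cup\bigcup_k\partial\Q'_k$.

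To bound the cost of the gap cubes I would use $\omega<\infty$ to fix $\delta_0>0$ such that $\M(\Q)\le C|\Q|$ with $C:=\omega+1$ for every $\Q\in\Cub_{\delta_0}$; this is available because $\delta\mapsto\sup_{\Q\in\Cub_\delta}\M(\Q)/|\Q|$ is nondecreasing with limit $\omega$ as $\delta\to 0$. After imposing $\alpha<\delta_0$, pairwise disjointness of the $\Q_i$ combined with the fact that they cover $K$ up to a null set yields $|W|\le|O|-|K|=|O_\alpha\setminus K|$; hence choosing $\alpha$ small enough that $|O_\alpha\setminus K|<\eta/(3C)$ gives $\sum_k\M(\Q'_k)\le C|W|<\eta/3$. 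Combining the two bounds produces $\M^\delta(O)<\M^\sharp(K)+\eta$, whence $\M^\sharp(O)\le\M^\sharp(K)+\eta$ by letting $\delta\to 0$, and the conclusion follows by letting $\eta\to 0$.

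The main obstacle is the disjoint-closures requirement in the definition of $\FF_\delta$: the cubes used to fill the gap must have closures disjoint from every $\overline{\Q_i}$, so one cannot merely juxtapose two independently chosen covers. This is precisely why the gap is taken to be $W=O\setminus\bigcup_i\overline{\Q_i}$ (not $O\setminus\bigcup_i\Q_i$) and why Vitali is applied inside $W$ rather than inside $O$. The assumption $\omega<\infty$ enters crucially only through the linear-in-volume control on the cost of the gap cubes: it is this bound that permits the tube thickness $\alpha$ to absorb any prescribed $\eta$.
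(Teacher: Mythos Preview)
Your overall strategy is sound and in fact close to the paper's, but there is a genuine gap at the Vitali step. You write that $W:=O\setminus\bigcup_{i\in I}\overline{\Q_i}$ is open and then extract a disjoint family of closed cubes contained in $W$ covering $W$ up to a null set. But $I$ is in general countably infinite, and a countable union of closed cubes need not be closed; hence $W$ need not be open. For a point $x\in W\cap\overline{\bigcup_i\overline{\Q_i}}$ (an accumulation point of the family that lies in none of the $\overline{\Q_i}$), every cube containing $x$ meets some $\overline{\Q_i}$, so there is \emph{no} closed cube through $x$ lying inside $W$. Consequently the family of closed cubes contained in $W$ is not a fine cover of $W$, and Vitali only yields a family covering $\mathrm{int}(W)$ up to a null set. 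Unless you can show $|W\setminus\mathrm{int}(W)|=0$ (which you have not argued, and which is not obvious), the combined family fails condition $|O\setminus\cup\Q|=0$ required for membership in $\FF_\delta(O)$.

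The paper handles exactly this obstruction by truncation: from the near-optimal cover $\{\Q_j^n\}_{j\ge1}$ of $K$ it first fixes a finite subfamily $\{\Q_j^n\}_{1\le j\le j_s}$ whose tail cost $\alpha_s:=\sum_{j\ge j_s}\M(\Q_j^n)$ is below $1/s$ (possible because the total cost is finite), applies Vitali inside the genuinely open set $O\setminus\bigcup_{1\le j\le j_s}\overline{\Q_j^n}$, and only afterwards lets $s\to\infty$. Your bound on $\sum_k\M(\Q'_k)$ via $\omega_\delta\,|W|$ is the same as the paper's, and your choice of $O=O_\alpha$ versus the paper's arbitrary open $O\supset K$ is cosmetic. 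The substantive missing idea is this finite truncation before filling the gap; adding it repairs your argument.
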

\begin{proof} Fix a compact set $K\subset \Omega$. Note that by Lemma~\ref{outermeasure} (ii) we have $\overline{\M^\sharp}(K)\ge \M^\sharp(K)$. So it remains to prove the reverse inequality $\overline{\M^\sharp}(K)\le \M^\sharp(K)$. 

By Lemma~\ref{outermeasure} (iii) we have $\M^\sharp(K)\le \omega\vert K\vert\le \omega\vert\Omega\vert\sinf\infty$. Let $O\subset \Omega$ be an open set such that $O\supset K$. For each $n\in\NN^*$ such that $n\ge n_0$ where ${n_0}:={\rm Ent}\left(\left(\diam(O)-\diam(K)\right)^{-1}\right)+1$ (where {\rm Ent}(r) denotes the integer part of the real number $r$) there exists $\{\Q_j^n\}_{j\ge 1}\subset \FF_{\frac1n}(K)$ such that
\begin{align*}
\infty\ssup{\M^\sharp}(K)+\frac1n\ge \sum_{j\ge 1}\M(\Q_j^n).
\end{align*}
Note that $\mathop{\cup}_{j\ge 1}\Q_j^n\subset \{x\in\RR^d:\dist(x,K)\sinf\frac1n\}\subset O$ because $n\ge n_0$ and $\Q_j^n\cap K\not=\emptyset$ for all $j\ge 1$.

Fix $n\ge n_0$. Then there exists an increasing sequence $\{j_s\}_{s\ge 1}$ such that $\sup_{s\ge 1}j_s=\infty$ and $\alpha_s:=\sum_{j\ge j_s}\M(\Q_j^n)\le \frac1s$ for all $s\ge 1$. Fix $s\in\NN^*$. For the open set $O\ssetminus \mathop{\cup}_{1\le j\le j_s}\overline{\Q}^n_j$ we use the Vitali covering theorem to find a disjointed countable family of closed cubes $\{\overline{\Q}_i^n\}_{i\in I}$ such that $\diam(\Q_i^n)\sinf\frac1n$,
\[
\left\vert \left(O\ssetminus \mathop{\cup}_{1\le j\le j_s}\overline{\Q}_j^n\right)\ssetminus \mathop{\cup}_{i\in I}\overline{\Q}_i^n\right\vert=0, \mbox{ and } \mathop{\cup}_{i\in I}\overline{\Q}_i^n\subset O\ssetminus \mathop{\cup}_{1\le j\le j_s}\overline{\Q}_j^n.
\]
It is easy to see that the countable family 
\[
\left\{\overline{\Q}_k^n\right\}_{k\in D}:=\left\{\overline{\Q}_i^n:i\in I\right\}\cup\left\{\overline{\Q}_j^n:1\le j\le j_s\right\}\in\FF_{\frac1n}(O).
\] 
If $\omega_{\frac1n}:=\sup_{\Q\subset\Omega,\;\diam(\Q)<\frac1n}\frac{\M(\Q)}{\vert \Q\vert}$ then
\begin{align*}
\M^{\frac1n}(O)-\M^\sharp(K)&\le \sum_{k\in D}\M(\Q_k^n)-\sum_{j\ge 1}\M(\Q_j^n)+\frac1n\\
&=\sum_{i\in I}\M(\Q_i^n)-\alpha_s+\frac1n\\
&\le \omega_{\frac1n}\vert O\ssetminus \mathop{\cup}_{1\le j\le j_s}\overline{\Q}_j^n\vert-\alpha_s+\frac1n.
\end{align*}
Passing to the limit $s\to \infty$ we obtain $\M^{\frac1n}(O)-\M^\sharp(K)\le\omega_{\frac1n}\vert O\ssetminus \mathop{\cup}_{j\ge 1}\overline{\Q}_j^n\vert+\frac1n $. If $E:=\cap_{n\ge n_0}\mathop{\cup}_{j\ge 1}\Q_j^n$ then $\vert K\ssetminus E\vert=0$, indeed, we have 
\begin{align*}
\vert K\ssetminus E\vert\le\sum_{n\ge n_0}\vert K\ssetminus \mathop{\cup}_{j\ge 1}\overline{\Q}_j^n\vert=0.
\end{align*}
Letting $n\to \infty$ it follows that $\M^{\sharp}(O)-\M^\sharp(K)\le \omega\vert O\ssetminus E\vert$. Therefore
\begin{align*}
\M^\sharp(O)&\le \M^\sharp(K)+\omega\left(\vert (O\ssetminus K)\ssetminus E\vert+\vert K \ssetminus E\vert\right)\\
&\le  \M^\sharp(K)+\omega\vert O\ssetminus K\vert.
\end{align*}
Since the open set $O$ containing $K$ is arbitrary, by the outer regularity of the Lebesgue measure we obtain $\overline{\M^\sharp}(K)\le \M^\sharp(K)$, and the proof is complete.
\end{proof}

\begin{lemma}\label{gmt-classics} Let $a,b\ssup 0$. Let $E\subset\Omega$ be an arbitrary set. 
\begin{enumerate}
\item[(i)] If $E\subset \{x\in \Omega: \M^+_\ast(x)\ssup a\}$ then $\overline{m^\sharp}(E)\ge a\vert E\vert$;
\item[(ii)] If $E\subset \{x\in \Omega: \M^-_\ast(x)\sinf b\}$ then $m^\sharp(E)\le b\vert E\vert$.
\end{enumerate}
\end{lemma}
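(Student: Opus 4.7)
The plan is to prove both parts using the Vitali covering theorem together with the structural properties of $\M^\sharp$ established in Lemma~\ref{outermeasure} (finite additivity for sets at positive distance, monotonicity under inclusion into open sets) and, for part (ii), the very definition of $\M^\sharp$ in terms of families in $\FF_\delta$.

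\textbf{Proof of (ii).} Let $O \supset E$ be an arbitrary open subset of $\Omega$. For every $x\in E$ we have $\M^-_\ast(x)\sinf b$, so for every $\delta\ssup 0$ there exists a cube $\Q\in\Cub_\delta$ with $x\in\Q$, $\overline{\Q}\subset O$ (shrinking $\delta$ according to $\dist(x,\RR^d\ssetminus O)$ if necessary), and $\M(\Q)\sinf b\vert\Q\vert$. This is a fine Vitali cover of $E$ (by closed cubes via passing to closures), so Vitali's theorem yields a pairwise disjoint countable subfamily $\{\overline{\Q}_i\}_{i\in I}$ with $\vert E\ssetminus\mathop{\cup}_{i\in I}\Q_i\vert=0$ and each $\Q_i$ intersecting $E$. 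Then $\{\Q_i\}_{i\in I}\in\FF_\delta(E)$ and, using the disjointness and that $\cup_i \Q_i \subset O$,
\begin{align*}
\M^\delta(E)\le \sum_{i\in I}\M(\Q_i)\le b\sum_{i\in I}\vert\Q_i\vert\le b\vert O\vert.
\end{align*}
Letting $\delta\to 0$ yields $\M^\sharp(E)\le b\vert O\vert$, and taking the infimum over all open $O\supset E$ (outer regularity of the Lebesgue measure) gives $\M^\sharp(E)\le b\vert E\vert$.

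\textbf{Proof of (i).} Fix an open set $O\subset\Omega$ with $O\supset E$. For every $x\in E$ we have $\M^+_\ast(x)\ssup a$, hence for every $\delta\ssup 0$ there exists a cube $\Q\in\Cub_\delta$ with $x\in\Q$, $\overline{\Q}\subset O$, and $\M^\sharp(\Q)\ssup a\vert \Q\vert$. Applying Vitali's theorem to this fine cover produces a countable family $\{\overline{\Q}_i\}_{i\in I}$ of pairwise disjoint closed cubes (so $\dist(\Q_i,\Q_j)\ssup 0$ for $i\neq j$) with $\cup_i\overline{\Q}_i\subset O$ and $\vert E\ssetminus\mathop{\cup}_{i\in I}\Q_i\vert=0$. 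By the finite additivity for sets at positive distance from Lemma~\ref{outermeasure}(i) applied iteratively, together with the monotonicity on open sets in Lemma~\ref{outermeasure}(ii),
\begin{align*}
\M^\sharp(O)\ge \M^\sharp\Big(\mathop{\cup}_{i=1}^N\Q_i\Big)=\sum_{i=1}^N\M^\sharp(\Q_i)\ge a\sum_{i=1}^N\vert\Q_i\vert
\end{align*}
for every $N\ge 1$. Letting $N\to\infty$ and using $\sum_i \vert\Q_i\vert\ge \vert\mathop{\cup}_i\Q_i\vert\ge\vert E\vert$ (the last inequality by $\vert E\ssetminus\mathop{\cup}_i\Q_i\vert=0$) gives $\M^\sharp(O)\ge a\vert E\vert$. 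Taking the infimum over all open $O\supset E$ yields $\overline{\M^\sharp}(E)\ge a\vert E\vert$.

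\textbf{Main obstacle.} The only delicate point is in part (i): $\M^\sharp$ is only known to be \emph{finitely} additive on sets at positive distance (Lemma~\ref{outermeasure}(i)), not countably additive, so we cannot directly write $\M^\sharp(\cup_i\Q_i)=\sum_i\M^\sharp(\Q_i)$. The workaround is to truncate the disjoint family at an arbitrary $N$, invoke finite additivity together with monotonicity on the open superset $O$, and only afterwards let $N\to\infty$. The second small technicality is arranging the Vitali cover entirely inside $O$, which is handled by choosing the admissible scale $\delta$ at each $x$ less than $\dist(x,\partial O)$, exploiting the \emph{fine} nature of the cover.
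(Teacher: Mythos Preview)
Your proof is correct and follows essentially the same route as the paper: Vitali covering plus the finite additivity of $\M^\sharp$ on sets at positive distance (Lemma~\ref{outermeasure}(i)) and monotonicity into open sets (Lemma~\ref{outermeasure}(ii)) for part (i), and Vitali covering plus the definition of $\M^\delta$ for part (ii). Your handling of the finite-vs-countable additivity issue in (i) by truncating to $N$ cubes and letting $N\to\infty$ is exactly what the paper does via the finite index set $I_\eps$.

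The only noticeable difference is in part (ii): the paper invokes a sharper form of the Vitali theorem (\cite[Theorem 2.2]{mattilabook}) which yields a disjoint subfamily $\{\Q_i\}$ satisfying simultaneously $\vert E\setminus\cup_i\Q_i\vert=0$ and $\sum_i\vert\Q_i\vert\le\vert E\vert+\delta$, so that $\M^\sharp(E)\le b\vert E\vert+b\delta$ follows immediately. You instead use the basic Vitali theorem, place the cubes inside an arbitrary open $O\supset E$ to get $\sum_i\vert\Q_i\vert\le\vert O\vert$, and recover the bound via the outer regularity of Lebesgue measure. Your variant is slightly longer but avoids the more refined covering result, which is a fair trade.
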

\begin{proof} We start by the proof of (i). Fix $a\ssup0$ and let $E\subset \{x\in \Omega: \M^+_\ast(x)\ssup a\}$. Let $O$ be an open set of $\Omega$ such that $O\supset E$. We can rewrite
\[
\{x\in \Omega: \M^+_\ast(x)\ssup a\}=\left\{x\in \Omega: \liminf_{\delta\to 0}\inf_{x\in\Q\in\Cub_{\delta}}\frac{\vert\Q\vert}{\M^\sharp(\Q)}\sinf \mbox{$\frac1a$}\right\}.
\]
Fix $\delta\ssup 0$ and consider the family of closed cubes
\begin{align*}
\G_\delta:=\left\{\overline{\Q}_\eps(x):x\in E,\;\Cub_\delta\ni{\Q}_\eps(x)\subset O\mbox{ and }\vert \Q_\eps(x)\vert\le\mbox{$\frac{1}{a}$}\M^\sharp(\Q_\eps(x))\right\}.
\end{align*}
The family $\G_\delta$ is a fine covering of $E$. By the Vitali covering theorem, there exists a disjointed countable family $\{\overline{\Q}_i\}_{i\in I}\subset\G_\delta$ such that $\vert E\ssetminus \mathop{\cup}_{i\in I}\Q_i\vert=0$. For each $\eps\ssup 0$ there exists a finite set $I_\eps\subset I$ such that $\vert E\ssetminus \mathop{\cup}_{i\in I_\eps}\Q_i\vert\sinf\eps$. Then by using Lemma~\ref{outermeasure} (i) 
\begin{align*}
\vert E\vert-\eps=\vert E\cap \mathop{\cup}_{i\in I_\eps}\Q_i\vert\le \sum_{i\in I_\eps}\vert\Q_i\vert\le \frac{1}{a}\sum_{i\in I_\eps}\M^\sharp(\Q_i)=\M^\sharp(\mathop{\cup}_{i\in I_\eps}\Q_i)\le \M^\sharp(O).
\end{align*}
The proof of (i) is complete since the open set $O$ which contains $E$ is arbitrary.

It remains to prove (ii). For each $\delta\ssup 0$ consider the set
\begin{align*}
\G_\delta:=\big\{\overline{\Q}_\eps(x)\in\Cub_\delta: {\Q}_\eps(x)\in\Cub_\delta,\; x\in E\mbox{ and }\M(\Q_\eps(x))\le b\vert\Q_\eps(x)\vert\big\}.
\end{align*} 
It is a fine cover of $E$, i.e., $\inf\{\diam(\Q):\Q\in\G_\delta\}=0$. Then there exists a disjointed countable subfamily $\{\Q_i\}_{i\in I}\subset\G_\delta$ such that $\vert E\setminus \mathop{\cup}_{i\in I}\Q_i\vert=0$ and $\sum_{i\in I}\vert \Q_i\vert\le \vert E\vert+\delta$ (see \cite[Theorem 2.2, p. 26]{mattilabook}), so $\{\Q_i\}_{i\in I}\in \FF_\delta(E)$. It follows that
\begin{align*}
m^\sharp(E)\le\sum_{i\in I}\M(\Q_i)\le \sum_{i\in I}b\vert \Q_i\vert\le b\vert E\vert+b\delta,
\end{align*}
 and the proof of (ii) is complete by letting $\delta\to 0$.
\end{proof}
\begin{lemma}\label{existlimitcor} If $\omega\sinf\infty$ then $\M^+_\ast(x)\le \M^-_\ast(x)$ a.e. in $\Omega$.
\end{lemma}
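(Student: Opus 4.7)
The strategy is a standard Vitali--differentiation type argument: assume the conclusion fails, pass to a rational split-level sandwich, and exploit the two inequalities of Lemma~\ref{gmt-classics} together with the identity $\overline{\M^\sharp}(K)=\M^\sharp(K)$ on compact sets from Lemma~\ref{galere} to obtain a contradiction.

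Suppose for contradiction that the set $\{x\in\Omega:\M^+_\ast(x)>\M^-_\ast(x)\}$ has positive Lebesgue measure. Then I can choose rationals $a>b\ge 0$ such that the set
\[
E_{a,b}:=\left\{x\in\Omega:\M^+_\ast(x)>a\right\}\cap\left\{x\in\Omega:\M^-_\ast(x)<b\right\}
\]
satisfies $|E_{a,b}|>0$. By Lemma~\ref{measurability-m} together with Remark~\ref{strictineq}, the functions $\M^+_\ast$ and $\M^-_\ast$ are Lebesgue measurable, so $E_{a,b}$ is Lebesgue measurable. By inner regularity of the Lebesgue measure I can then pick a compact set $K\subset E_{a,b}$ with $|K|>0$.

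The contradiction comes from squeezing $\M^\sharp(K)$ between two incompatible values. On one hand, since $K\subset\{x\in\Omega:\M^+_\ast(x)>a\}$, Lemma~\ref{gmt-classics}(i) gives
\[
\overline{\M^\sharp}(K)\ge a|K|.
\]
Since $K$ is compact and $\omega<\infty$, Lemma~\ref{galere} yields $\overline{\M^\sharp}(K)=\M^\sharp(K)$, hence $\M^\sharp(K)\ge a|K|$. On the other hand, since $K\subset\{x\in\Omega:\M^-_\ast(x)<b\}$, Lemma~\ref{gmt-classics}(ii) gives
\[
\M^\sharp(K)\le b|K|.
\]
Combining these two estimates produces $a|K|\le b|K|$, which contradicts $a>b$ and $|K|>0$. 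Therefore $\M^+_\ast(x)\le\M^-_\ast(x)$ for almost every $x\in\Omega$.

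The only delicate point is the measurability of $E_{a,b}$, together with the use of Lemma~\ref{galere} to reconcile $\overline{\M^\sharp}$ with $\M^\sharp$ on a compact subset; once these are in place, the argument reduces to directly applying the two halves of Lemma~\ref{gmt-classics} to the same compact set. I expect no additional technical obstacles beyond the care needed to verify that the strict-inequality versions of the set-level bounds in Lemma~\ref{gmt-classics} still apply, which is immediate from their proofs using Vitali covers.
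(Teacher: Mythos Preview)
Your proof is correct and follows essentially the same approach as the paper: fix rationals $a>b$, pass to the set where $\M^+_\ast>a$ and $\M^-_\ast<b$, use inner regularity to find a compact subset, apply Lemma~\ref{galere} to identify $\overline{\M^\sharp}$ with $\M^\sharp$ there, and squeeze via the two halves of Lemma~\ref{gmt-classics} to conclude that set is null. The paper phrases this as directly showing each level set $S_{a,b}$ has measure zero (via an $\eps$-approximation and the Borel subset furnished by Lemma~\ref{measurability-m}) rather than by contradiction, but the substance is identical.
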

\begin{proof} Fix $a,b\in\mathbb{Q}$ such that $a\ssup b\ssup 0$. Consider the following set
\begin{align*}
S_{a,b}:=\big\{x\in\Omega: \M^-_\ast(x)\sinf b\sinf a\sinf \M^+_\ast(x)\big\}.
\end{align*}
By Remark~\ref{strictineq} and Lemma~\ref{measurability-m} there exists a Borel set $B_{a,b}$ such that $B_{a,b}\subset S_{a,b}$ and $\vert S_{a,b}\ssetminus B_{a,b}\vert=0$. Fix $\eps\ssup 0$. Since the Lebesgue measure is inner regular, choose a compact set $K_\eps\subset B_{a,b}$ such that $\vert B_{a,b}\ssetminus K_\eps\vert\sinf\eps$. From Lemma~\ref{galere} we have $\overline{\M^\sharp}(K_\eps)={\M^\sharp}(K_\eps)$ since $\omega\sinf\infty$. Using Lemma~\ref{gmt-classics} we obtain $a \vert K_\eps\vert\le m^\sharp(K_\eps)\le b \vert K_\eps\vert$. Therefore $\vert K_\eps\vert=0$ since $b\sinf a$. Hence $\vert S_{a,b}\vert=\vert B_{a,b}\ssetminus K_\eps\vert+\vert K_\eps\vert+\vert S_{a,b}\ssetminus B_{a,b}\vert\sinf\eps$, and $\vert S_{a,b}\vert=0$ by letting $\eps\to 0$. Now, the set where $\M^+_\ast$ is greater than $\M^-_\ast$ is a countable union of negligible sets, i.e.,
\begin{align*}
\big\{x\in\Omega:  \M^-_\ast(x)\sinf \M^+_\ast(x)\big\}=\bigcup_{0<b<a,\;(a,b)\in \mathbb{Q}^2} S_{a,b},
\end{align*}
and the proof is complete.
\end{proof}

\bigskip

\noindent{\bf Proof of Proposition~\ref{existlimit}}. Using \eqref{hyp-import} and the definitions of $\M^+_\ast$ and  $\M^-_\ast$ we have for every $x\in\Omega$
\begin{align*}
\M_\ast^-(x)\le\liminf_{\delta\to 0}\frac{\M(\Q_\delta(x))}{\delta^d}\le \limsup_{\delta\to 0}\frac{\M(\Q_\delta(x))}{\delta^d}\le \M_\ast^+(x).
\end{align*}
By Lemma~\ref{existlimitcor} we obtain
\begin{align*}
\M_\ast^-(x)=\M_\ast^+(x)=\lim_{\delta\to 0}\frac{\M(\Q_\delta(x))}{\delta^d}\mbox{ a.e. in }\Omega
\end{align*}
which completes the proof.\hfill$\blacksquare$

\medskip

If $L:\Omega\times\MM^{m\times d}\to [0,\infty]$ is a Borel measurable integrand then for each $\xi\in\MM^{m\times d}$ we denote by $\M_\xi:\Cub\to [0,\infty]$ the set function defined by
\begin{align*}
\M_\xi(\Q)=\inf\left\{\int_\Q L(x,\xi+\nabla \varphi(x))dx:\varphi\in W^{1,p}_0(\Q;\RR^m)\right\}.
\end{align*}
\subsection{Proof of Proposition~\ref{exist-ZL}} The Proposition~\ref{exist-ZL} follows from Proposition~\ref{existlimit} by noticing that 
\begin{align*}
\Z L(x,\xi)=\liminf_{\eps\to 0}\frac{\M_\xi(\Q_\eps(x))}{\eps^d},
\end{align*}
and by using the following result.
\begin{lemma}\label{mastinquelity} If \ref{H4} holds then for every $x\in\Omega$ and every $\xi\in\dom L(x,\cdot)$ we have
\begin{align}\label{eq01:mastinquelity}
\limsup_{\eps\to 0}\frac{\M_\xi(\Q_\eps(x))}{\eps^d}\le\limsup_{\eps\to 0}\frac{\M^{\sharp}_\xi(\Q_\eps(x))}{\eps^d}.
\end{align}
\end{lemma}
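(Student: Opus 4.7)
The plan is to prove, for every $\eps > 0$ with $\Q_\eps(x) \subset \Omega$, the pointwise bound $\M_\xi(\Q_\eps(x)) \le \M^\sharp_\xi(\Q_\eps(x))$, from which \eqref{eq01:mastinquelity} follows by dividing by $\eps^d$ and taking the $\limsup$. Since $\M^\sharp_\xi(\Q_\eps(x)) = \sup_{\delta > 0} \M^\delta_\xi(\Q_\eps(x))$, it is enough to establish the countable-subadditivity statement $\M_\xi(\Q_\eps(x)) \le \sum_{i \in I} \M_\xi(\Q_i)$ for every family $\{\Q_i\}_{i \in I} \in \FF_\delta(\Q_\eps(x))$ and every $\delta > 0$, and then take the infimum over such families to compare with $\M^\delta_\xi$.

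Assume the right-hand side is finite (otherwise there is nothing to prove). The key preparatory observation is that \ref{H4} combined with $\xi \in \dom L(x, \cdot) \subset \Lambda_L(x)$ gives $\fint_{\Q_\eps(x)} L(y, \xi) \, dy \to L(x, \xi) < \infty$ as $\eps \to 0$, so $L(\cdot, \xi) \in L^1(\Q_\eps(x))$ for all sufficiently small $\eps$. Fix such $\eps$ and fix $\eta > 0$. By absolute continuity of the $L^1$-integral together with $\bigl|\Q_\eps(x) \setminus \bigcup_{i \in I} \Q_i\bigr| = 0$, we can choose a finite $J \subset I$ with $\int_{\Q_\eps(x) \setminus \bigcup_{i \in J} \Q_i} L(y, \xi) \, dy < \eta$. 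For each $i \in J$, pick $\varphi_i \in W^{1,p}_0(\Q_i; \RR^m)$ satisfying $\int_{\Q_i} L(y, \xi + \nabla \varphi_i) \, dy \le \M_\xi(\Q_i) + \eta / |J|$; denote by $\tilde\varphi_i$ the zero extension of $\varphi_i$ to $\Q_\eps(x)$, and set $\varphi_J := \sum_{i \in J} \tilde\varphi_i \in W^{1,p}_0(\Q_\eps(x); \RR^m)$. Since the cubes $\Q_i$ are pairwise disjoint, $\nabla \varphi_J = \nabla \varphi_i$ on each $\Q_i$ and $\nabla \varphi_J = 0$ on $\Q_\eps(x) \setminus \bigcup_{i \in J} \Q_i$, whence
\begin{align*}
\M_\xi(\Q_\eps(x))
&\le \int_{\Q_\eps(x)} L(y, \xi + \nabla \varphi_J(y)) \, dy \\
&= \sum_{i \in J} \int_{\Q_i} L(y, \xi + \nabla \varphi_i) \, dy + \int_{\Q_\eps(x) \setminus \bigcup_{i \in J} \Q_i} L(y, \xi) \, dy \\
&\le \sum_{i \in I} \M_\xi(\Q_i) + 2\eta.
\end{align*}
Letting $\eta \to 0$ delivers the claimed subadditivity, and passing to the infimum over $\FF_\delta(\Q_\eps(x))$ then to the supremum over $\delta > 0$ completes the proof.

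The only delicate step is handling the leftover contribution $\int_{\Q_\eps(x) \setminus \bigcup_{i \in J} \Q_i} L(y, \xi) \, dy$ coming from the fact that the glued gradient is zero there; this is precisely where \ref{H4} and $\xi \in \dom L(x, \cdot)$ enter, guaranteeing the local $L^1$-integrability of $L(\cdot, \xi)$ and hence the absolute-continuity estimate. A direct passage to the infinite sum $\sum_i \tilde\varphi_i$ without truncation would additionally require a coercivity condition on $L$ in order to secure convergence of the series in $W^{1,p}$, which is not among the hypotheses here; the finite-truncation-and-glue argument above sidesteps this issue cleanly.
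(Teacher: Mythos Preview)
Your argument hinges on the claim that the glued function $\varphi_J := \sum_{i \in J} \tilde\varphi_i$ lies in $W^{1,p}_0(\Q_\eps(x);\RR^m)$, which tacitly assumes that each covering cube $\Q_i$ is contained in $\Q_\eps(x)$. But look again at the definition of $\FF_\delta(E)$ in this section: the cubes are only required to satisfy $\Q_i\cap E\neq\emptyset$ and $|E\ssetminus\cup_i\Q_i|=0$; they are \emph{not} forced to lie inside $E$. (This is in contrast with the Vitali families $\mathcal V_\eps(O)$ used to build $\M^\ast$ in Section~\ref{Proof of Proposition (i)}, where the cubes are subsets of $O$.) If some $\Q_i$ protrudes beyond $\Q_\eps(x)$, then $\varphi_i\in W^{1,p}_0(\Q_i;\RR^m)$ need not vanish on $\Q_i\cap\partial\Q_\eps(x)$, so $\varphi_J$ does not belong to $W^{1,p}_0(\Q_\eps(x);\RR^m)$ and the inequality $\M_\xi(\Q_\eps(x))\le\int_{\Q_\eps(x)}L(y,\xi+\nabla\varphi_J)\,dy$ is unjustified. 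Your subadditivity bound therefore holds only for the subclass of families with $\Q_i\subset\Q_\eps(x)$, and passing to the infimum over the full class $\FF_\delta(\Q_\eps(x))$ may give something strictly smaller. This is a genuine gap; the pointwise inequality $\M_\xi(\Q_\eps(x))\le\M^\sharp_\xi(\Q_\eps(x))$ you aim for is not established.

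The paper's proof deals with exactly this overshoot. One fixes $s>1$, chooses the covering with mesh small enough that $O_\delta:=\cup_i\Q_i\subset\Q_{s\eps}(x)$, and views the glued $\varphi_\delta$ as an element of $W^{1,p}_0(\Q_{s\eps}(x);\RR^m)$. This yields a bound on $\M_\xi(\Q_{s\eps}(x))$ in terms of $\M^\sharp_\xi(\Q_\eps(x))$ plus an annular error $\int_{\Q_{s\eps}(x)\ssetminus\Q_\eps(x)}L(y,\xi)\,dy$; it is precisely this annular term that \ref{H4} controls, since it contributes $(s^d-1)L(x,\xi)$ after dividing by $\eps^d$ and letting $\eps\to 0$, and then vanishes as $s\to 1$. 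So your intuition that \ref{H4} is needed to absorb a ``leftover'' integral of $L(\cdot,\xi)$ is right, but the leftover lives in an annulus created by enlarging the base cube, not in a residual set of small measure inside $\Q_\eps(x)$.
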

\begin{proof} Fix $\eps\in ]0,1[$ and $s\ssup 1$. Fix $x\in \Omega^\prime$ where $\Omega^\prime=\{x\in\Omega: \dom L(x,\cdot)\subset\Lambda_L(x)\}$ which satisfies $\vert\Omega\ssetminus\Omega^\prime\vert=0$ since \ref{H4}. Fix $\xi\in\dom L(x,\cdot)$ and fix $\delta\in ]0,2(s-1)\eps^{1-d}[$. Choose $\{\overline{\Q}_i\}_{i\ge 1}\in \FF_{\frac{\delta\eps^d}{2}}(\Q_\eps(x))$ such that $\vert \Q_\eps(x)\ssetminus \mathop{\cup}_{i\ge 1}\Q_i\vert=0$, $\Q_i\cap \Q_\eps(x)\not=\emptyset$ for all $i\ge 1$, and
\begin{align}\label{eq1:mastinquelity}
 \sum_{i\ge 1} \M_\xi(\Q_i)\le \frac{\delta\eps^d}{2}+\M_\xi^\sharp(\Q_\eps(x)).
\end{align}
If $O_\delta=\mathop{\cup}_{i\ge 1}\Q_i$ then $\Q_\eps(x)\subset O_\delta\subset \Q_{s\eps}(x)$. Indeed, on one hand we have $O_\delta\subset \{y\in\Omega:\dist(y,\Q_\eps(x))\sinf\frac{\delta\eps^d}{2}\}$ and $\frac{\delta\eps^d}{2}+\eps\le s\eps$, thus $O_\delta\subset \Q_{s\eps}(x)$. On the other hand $\Q_\eps(x)\ssetminus \overline{O_\delta}$ is open and $\vert \Q_\eps(x)\ssetminus \overline{O_\delta}\vert\le \vert \Q_\eps(x)\ssetminus {O_\delta}\vert=0$, therefore $\Q_\eps(x)\ssetminus \overline{O_\delta}=\emptyset$. It follows that $\Q_\eps(x)\subset \overline{O_\delta}$ and so $\Q_\eps(x)\subset {O_\delta}$.

For each $i\ge 1$ there exists $\varphi_\delta^i\in {W}^{1,p}_0(\Q_i;\RR^m)$ such that
\begin{align}\label{eq2:mastinquelity}
 \int_{\Q_i}L(y,\xi+\nabla\varphi_\delta^i)dy\le \frac{\delta\eps^d}{2^{i+1}}+\M_\xi(\Q_i).
\end{align}
Define $\varphi_\delta:=\sum_{i\ge 1}\varphi_\eps^i\mathbb{I}_{\Q_i}\in  {W}^{1,p}_0(O_\delta;\RR^m)$. By taking account of \eqref{eq1:mastinquelity} we have
\begin{align}\label{eq3:mastinquelity}
\int_{O_\delta}L(y,\xi+\nabla\varphi_\delta)dy= \sum_{i\ge 1}  \int_{\Q_i}L(y,\xi+\nabla\varphi_\delta^i)dy&\le \frac{\delta\eps^d}{2}+\sum_{i\ge 1} \M_\xi(\Q_i)\\
&\le \delta\eps^d+\M^\sharp_\xi(\Q_\eps(x)).\notag
\end{align}
The function $\varphi_\delta$ also belongs in ${W}^{1,p}_0(\Q_{s\eps}(x);\RR^m)$, and moreover
\begin{align}\label{eq4:mastinquelity}
\int_{O_\delta}L(y,\xi+\nabla\varphi_\delta)dy&\ge \int_{\Q_{s\eps}(x)}L(y,\xi+\nabla\varphi_\delta)dy-\int_{\Q_{s\eps(x)}\ssetminus \Q_\eps(x)}L(y,\xi)dy\\
&\ge \M_\xi(\Q_{s\eps}(x))-\int_{\Q_{s\eps(x)}\ssetminus \Q_\eps(x)}L(y,\xi)dy.\notag
\end{align}
From \ref{H4} we have
\begin{align*}
\limsup_{\eps\to 0}\frac{1}{\eps^d}\int_{\Q_{s\eps}(x)\ssetminus \Q_\eps(x)}L(y,\xi)dy&=\limsup_{\eps\to 0}\left\{s^d\fint_{\Q_{s\eps}(x)}\!\!L(y,\xi)dy-\fint_{\Q_{\eps}(x)}\!\! L(y,\xi)dy\right\}\\
&\le (s^d-1)L(x,\xi).
\end{align*}
From \eqref{eq3:mastinquelity} and \eqref{eq4:mastinquelity} it holds since $s\ssup 1$
\begin{align}\label{eq5:mastinquelity}
\limsup_{\eps\to 0}\frac{\M_\xi(\Q_{s\eps}(x))}{(s\eps)^d}\le (s^d-1)L(x,\xi)+\delta+\limsup_{\eps\to 0}\frac{\M^\sharp_\xi(\Q_\eps(x))}{\eps^d}.
\end{align}
But again since $s\ssup 1$ we have
\begin{align*}
\limsup_{\eps\to 0}\frac{\M_\xi(\Q_{s\eps}(x))}{(s\eps)^d}&=\lim_{\eps \to 0}\sup_{\eta\in ]0,\eps[}\frac{\M_\xi(\Q_{s\eta}(x))}{(s\eta)^d}\\
&=\lim_{\eps \to 0}\sup_{\eta^\prime\in ]0,s\eps[}\frac{\M_\xi(\Q_{\eta^\prime}(x))}{(\eta^\prime)^d}\\
&\ge \lim_{\eps \to 0}\sup_{\eta^\prime\in ]0,\eps[}\frac{\M_\xi(\Q_{\eta^\prime}(x))}{(\eta^\prime)^d}=\limsup_{\eps\to 0}\frac{\M_\xi(\Q_{\eps}(x))}{\eps^d}.
\end{align*}
Therefore from \eqref{eq5:mastinquelity} we obtain
\begin{align*}
\limsup_{\eps\to 0}\frac{\M_\xi(\Q_{\eps}(x))}{\eps^d}\le (s^d-1)L(x,\xi)+\delta+\limsup_{\eps\to 0}\frac{\M^\sharp_\xi(\Q_\eps(x))}{\eps^d},
\end{align*}
letting $s\to 1$ and $\delta\to 0$ we obtain \eqref{eq01:mastinquelity} and the proof is complete.
\end{proof}

\bibliographystyle{alpha}

\end{document}